\let\mathbbm\mathds
\newtheorem{theorem}{Theorem}
\newtheorem{conjecture}[theorem]{Conjecture}
\newtheorem{prop}[theorem]{Proposition}
\newtheorem{definition}[theorem]{Definition}
\theoremstyle{definition}
\newtheorem{remark}[theorem]{Remark}
\newcommand{\beq}{\begin{equation}}
\newcommand{\eeq}{\end{equation}}
\def\emm#1,{{\em #1}}
\def\section{\@startsection{section}{1}%
 \z@{.7\linespacing\@plus\linespacing}{.5\linespacing}%
 {\normalfont\bfseries\scshape\centering}}
\def\subsection{\@startsection{subsection}{2}%
  \z@{.5\linespacing\@plus\linespacing}{.5\linespacing}%
  {\normalfont\bfseries\scshape}}
\def\subsubsection{\@startsection{subsubsection}{3}%
 \z@{.5\linespacing\@plus\linespacing}{-.5em}
 {\normalfont\bfseries}}
\newcommand{\cs}{\mathbb{C}}
\newcommand{\fps}{formal power series}
\newcommand{\gw}{{\rm{\bf w}}}
\newcommand{\gu}{\textbf{u}}
\newcommand{\gv}{\textbf{v}}
\newcommand{\ba}{\bar{a}}
\DeclareMathOperator{\dv}{dv}
\DeclareMathOperator{\ee}{e}
\newcommand{\gf}{generating function}
\newcommand{\gfs}{generating functions}
\renewcommand{\epsilon}{\varepsilon}
\newcommand{\vareps}{\varepsilon}
\newcommand{\la}{\lambda}
\newcommand{\pinf}{\mathbb{L}^{(k)}}
\newcommand{\PEO}{\mathcal{O}} 
\newcommand{\peom}{\mathcal{L}^{(k)}}
\newcommand{\Kc}{K} 
\newcommand{\Lc}{L} 
\newcommand{\tLc}{\tilde L} 
\newcommand{\Le}{\Lc_\varepsilon} 
\newcommand{\Kp}{{\sf K}} 
\newcommand{\Lp}{{\sf L}} 
\newcommand{\tLp}{\tilde{\sf L}} 
\newcommand{\Kpp}{{\sf K}'} 
\newcommand{\Lpp}{{\sf L}'} 
\newcommand{\Lppp}[1]{{{\sf L}'}^+_{#1}} 
\newcommand{\tLpp}{\tilde{\sf L}'} 
\newcommand{\Lpe}{\Lp_\varepsilon} 
\newcommand{\psup}{\mathbb{U}^{(k)}}
\newcommand{\peop}{\mathcal{U}^{(k)}}
\newcommand{\Tc}{T} 
\newcommand{\Ucl}{U} 
\newcommand{\Uc}[1]{U_{#1}^x} 
\newcommand{\Ucun}[1]{U_{#1}} 
\newcommand{\Upl}{{\sf U}} 
\newcommand{\Up}[1]{{\sf U}_{#1}^x}
\newcommand{\Upun}[1]{{\sf U}_{#1}} 
\newcommand{\Tp}{{\sf T}} 
\newcommand{\Uppl}{{\sf U}'} 
\newcommand{\Upp}[1]{{{\sf U}'}_{#1}^x}
\newcommand{\Uppun}[1]{{\sf U}'_{#1}} 
\newcommand{\Tpp}{{\sf T}'} 
\newcommand{\Ac}{A} 
\newcommand{\Bc}{B} 
\newcommand{\Cc}{C} 
\newcommand{\Ap}{A} 
\newcommand{\Bp}{B} 
\newcommand{\Cp}{C} 
\newcommand{\App}{A'} 
\newcommand{\Bpp}{B'} 
\newcommand{\Cpp}{C'} 
\DeclareMathOperator{\Pol}{{\rm Pol}}
\newcommand{\e}{f}
\begin{document}
\title[On the number of planar Eulerian orientations]
{On the number of planar Eulerian orientations}

\author[N. Bonichon]{Nicolas Bonichon}

\author[M. Bousquet-M\'elou]{Mireille Bousquet-M\'elou}

\author[P. Dorbec]{Paul Dorbec}

\author[C. Pennarun]{Claire Pennarun}

\thanks{{NB, MBM  and CP were partially supported by the French ``Agence Nationale
de la Recherche'',  via grants JCJC EGOS ANR-12-JS02-002-01 (NB
 and CP) and  Graal ANR-14-CE25-0014 (MBM)}}

\address{CNRS, LaBRI, Universit\'e de Bordeaux, 351 cours de la
  Lib\'eration,  F-33405 Talence Cedex, France} 
\email{bonichon,bousquet,dorbec,cpennaru@labri.fr}

\begin{abstract}
The number of  planar Eulerian maps with $n$ edges is well-known to
have a simple expression. But what is the number of planar Eulerian \emm
orientations, with $n$ edges? This problem appears to be
difficult. To approach it, we
define and count families of subsets and supersets of planar Eulerian 
orientations, indexed by an integer $k$, that converge to the set of
all planar Eulerian orientations as $k$ increases. The \gfs\ of our
subsets can be characterized by systems of  polynomial equations, and
are thus algebraic. The \gfs\ of our
supersets   are characterized by polynomial systems involving divided
differences, as often occurs in map enumeration. We prove that these series are algebraic as well. We obtain in this way
lower and upper bounds on the growth rate  of planar Eulerian
orientations, which appears to be around $12.5$.
\end{abstract}

\keywords{planar maps, Eulerian orientations, algebraic generating functions}
\maketitle

\section{Introduction}

 The enumeration of planar maps (graphs embedded on the sphere) has
received a lot of attention since the sixties.  Many remarkable
counting results have been discovered, which were often illuminated
later by beautiful
bijective constructions. For instance, it has been known\footnote{in
  disguise! The 1963 result involves \emm bicubic, maps, which are in
  one-to-one correspondence with Eulerian
  maps. See e.g.~\cite[Cor.~2.4]{mbm-schaeffer-constellations} for the dual
  bijection between face-bicoloured triangulations and bipartite maps.} since 1963
that the number of rooted planar \emm Eulerian, maps (i.e., planar
maps in which every vertex has even degree) with $n$ edges is~\cite{tutte-census-maps}:
\beq\label{euler-count}
m_n= \frac{3 \cdot 2^{n-1}}{(n+1)(n+2)}{2n\choose n}.
\eeq
A bijective explanation involving plane trees can be found
in~\cite{mbm-schaeffer-constellations}. The associated \gf\ $
 M(t)=\sum_{n\ge 0} m_n t^n
$
is known to be \emm algebraic,, that
is, to satisfy a polynomial equation. More precisely:
$$
t^2+11 t-1-(8 t^2+12 t-1) M(t)+16 t^2 M(t)^2=0.
$$

\begin{figure}[h]
\centering{\includegraphics[scale=0.8]{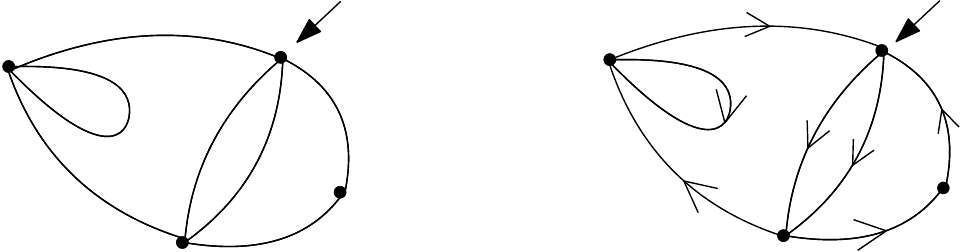}}
\caption{A rooted Eulerian map and a rooted Eulerian orientation.}
\label{fig:euler}
\end{figure}

Beyond their enumerative implications, bijections involving maps  have  been applied to 
encode, sample and draw maps efficiently~\cite{BGH05,castelli-devillers-schaeffer,fusy2009transversal,poulalhon-schaeffer-coding}. More recently, they have played a
key role in the study of large random planar maps, culminating with
the existence of a universal scaling limit known as the \emm Brownian
map,~\cite{legall}.

Planar maps equipped with an additional structure (e.g. a spanning tree~\cite{mullin-boisees}, a proper
colouring~\cite{lambda12,tutte-differential}, an Ising or Potts configuration~\cite{baxter-dichromatic,borot3,BK87,mbm-schaeffer-ising,BDG-blocked,daul,eynard-bonnet-potts,Ka86}...) are also much studied,
both in combinatorics and in theoretical physics, where maps are
considered as a model for two-dimensional quantum gravity~\cite{DFGZJ}.
However, for many of these structures, we are still in the early days
of the study, as even their enumeration remains elusive, not to mention
bijections and asymptotic properties.

Recent progresses in this direction include the enumeration of planar
maps weighted by their Tutte polynomial, or equivalently, maps equipped
with a Potts configuration. The associated \gf\ $P(t)$ is known to be \emm
differentially algebraic,. That is, there exists a polynomial equation
relating $P(t)$ and its
derivatives~\cite{bernardi-mbm-alg,BeBM-15}. The Tutte polynomial has
many interesting specializations (in particular, it counts all  structures cited
above, like spanning trees and colourings) and several special cases had been
solved earlier.
One key tool in the solution is that the Tutte
polynomial of a map can be computed inductively, by deleting and
contracting edges. 

Another solved example, which does not {seem to} belong to the Tutte/Potts
realm,
consists of maps (in fact, triangulations) equipped with certain
orientations called \emm Schnyder orientations,.  The results
obtained there have analogies with those obtained for another class of
orientations, called \emm bipolar, (which \emm do, belong to the Tutte
realm). Indeed, for both classes of oriented maps:
\begin{itemize}
\item oriented maps are counted by simple numbers, which are
also known to count other combinatorial objects (various lattice
paths and permutations,  among others);
\item there exist nice bijections explaining these equi-enumeration results~\cite{Boni05,bonichon-mbm-fusy,felsner-baxter,fusy-poulalhon-schaeffer-bip};
\item for a fixed map $M$, the set of Schnyder/bipolar orientations of $M$ has a
  lattice
  structure~\cite{Pro:1993,felsner2004lattice,OdM:1994}. {The
    above bijections, once specialized  to maps equipped
  with their (unique) minimal orientation, coincide with  attractive bijections
  designed earlier for (unoriented)
  maps}~\cite{bernardi-bonichon,bonichon-mbm-fusy};
\item specializing  the bijections further to maps that have only one
  Schnyder/bipolar orientation also yields {interesting combinatorial
  results~\cite{bernardi-bonichon,bonichon-mbm-fusy}.}
\end{itemize}

These observations led us to wonder about another natural class of
orientations, namely  those in which every vertex as equal in- and
out-degree, known as \emm Eulerian, orientations (Figure~\ref{fig:euler}). Clearly, a map needs to be Eulerian to admit an Eulerian
orientation. The condition is in fact sufficient (such maps even
admit an Eulerian circuit~\cite{hierholzer}). One
analogy with the above two classes is that the set of
Eulerian orientations of a given planar map can be equipped with a
lattice structure~\cite{Pro:1993,felsner2004lattice}. Moreover, Eulerian maps  (equivalently,
Eulerian maps equipped with their minimal Eulerian orientation) have
rich combinatorial properties: not only are they counted by simple
numbers (see~\eqref{euler-count}), but they are equinumerous with several other
families of objects, {like certain trees~\cite{mbm-schaeffer-constellations}
and permutations~\cite{bona,fusy-12}.}
And they are  often related to them by beautiful
bijections.

Hence our plan to count Eulerian orientations with $n$ edges. However, this  appears to be a difficult
problem. In fact, we even lack a way
to compute the corresponding numbers  in, say,  polynomial
time. This leads us to resort to approximation methods that are
ubiquitous when studying hard counting problems, like the enumeration
of self-avoiding
walks~\cite{alm-janson,fisher-sykes,guttmann-confinement,poenits-tittmann},
or polyominoes~\cite{barequet-moffie-ribo-rote,jensen,klarner-rivest}:
{denoting by $\PEO$ the set of Eulerian orientations, we
construct subsets and supersets of~$\PEO$, indexed by an integer parameter $k$, which
converge to  $\PEO$ as $k$ increases. And we count the elements of these sets.}

One difference between our study and those dealing with tricky 
objects on regular lattices (like the above mentioned self-avoiding walks and polyominoes) is worth noting.  The subsets
and supersets that are defined to approximate lattice objects  often have a
one-dimensional structure, and  \emm rational
\gfs, that can be obtained using a transfer matrix approach. {A typical
example is provided by self-avoiding walks confined to a strip of
fixed width.} But our
subsets and supersets of orientations belong to the world of maps (or
\emm random lattices,
in the physics terminology), and have \emm algebraic \gfs,. More precisely,
our subsets have a branching, tree-like structure, which yields a
system of algebraic equations for their \gfs, and a universal asymptotic
behaviour in $\lambda^n n^{-3/2}$ (for a growth rate $\la$ depending on
the index $k$). {The \gfs\ of our
supersets are more mysterious. They are bivariate series given by systems of equations}  involving \emm divided differences,
of the form
$$
\frac{F(t;x)-F(t;1)}{x-1},
$$
and we have to resort to a deep theorem in algebra, due to
Popescu~\cite{popescu},  to   prove their
algebraicity for all $k$ (we also solve these systems for
small values of $k$). We conjecture that their asymptotic behaviour is
also universal, this time in $\la^n n^{-5/2}$, as for planar maps
(again, for varying $\la$).
 
\medskip
Here is now an outline of the paper. In Section~\ref{sec:decomp} we
first present a simple recursive decomposition of (rooted) Eulerian orientations,
based on the contraction of the root edge, and then a variant of this
decomposition. Thanks to this variant, we can compute  the number $o_n$ of Eulerian orientations having $n$
edges for $n\le 15$ (Figure~\ref{tab:values}).  By attaching two orientations at
their root vertex, we see that 
the sequence $(o_n)_{n\ge 0}$ is super-multiplicative:
$$
o_{m+n} \ge o_m o_n.
$$
This classically 
implies that the limit $\mu$ of $o_n^{1/n}$ exists
and satisfies
\beq \label{fekete}
\mu=\sup_n o_n^{1/n}.
\eeq
(see Fekete's Lemma in~\cite[p.~103]{vanlint01}).
We call $\mu$ the \emm growth rate, of Eulerian orientations. It is
bounded from below by the growth rate 8 of Eulerian maps, and from
above by the growth rate 16 of Eulerian maps equipped with an
arbitrary orientation. Our data for $n\le 15$ suggest than $\mu$ is  around
$12.5$ (Figure~\ref{tab:values}, right). Using differential
approximants~\cite{tony-perso}, Tony
Guttmann predicts $\mu=12.568\ldots$, and an asymptotic behaviour
{$o_n\sim c \mu^n n^{-\gamma}$ with $\gamma=2.23\ldots$}

\begin{figure}[h]
  \begin{minipage}{9cm}
\hskip -8mm \begin{tabular}{c|c || c | c || c | c } 
$n$ & $o_n$ & $n$ & $o_n$ & $n$ & $o_n$\\
\hline 
0 & 1  & 6 & 37 548 & 12 & 37 003 723 200\\
1 & 2  & 7 & 350 090 & 13 & 393 856 445 664\\
2 & 10 & 8 & 3 380 520 & 14 & 4 240 313 009 272\\
3 & 66 & 9 & 33 558 024 & 15 & 46 109 094 112 170\\
4 & 504 & 10 & 340 670 720 & & \\
5 & 4 216 & 11 & 3 522 993 656 & &\\
\end{tabular}   
  \end{minipage} \begin{minipage}{4cm}
     \includegraphics[scale=0.5]{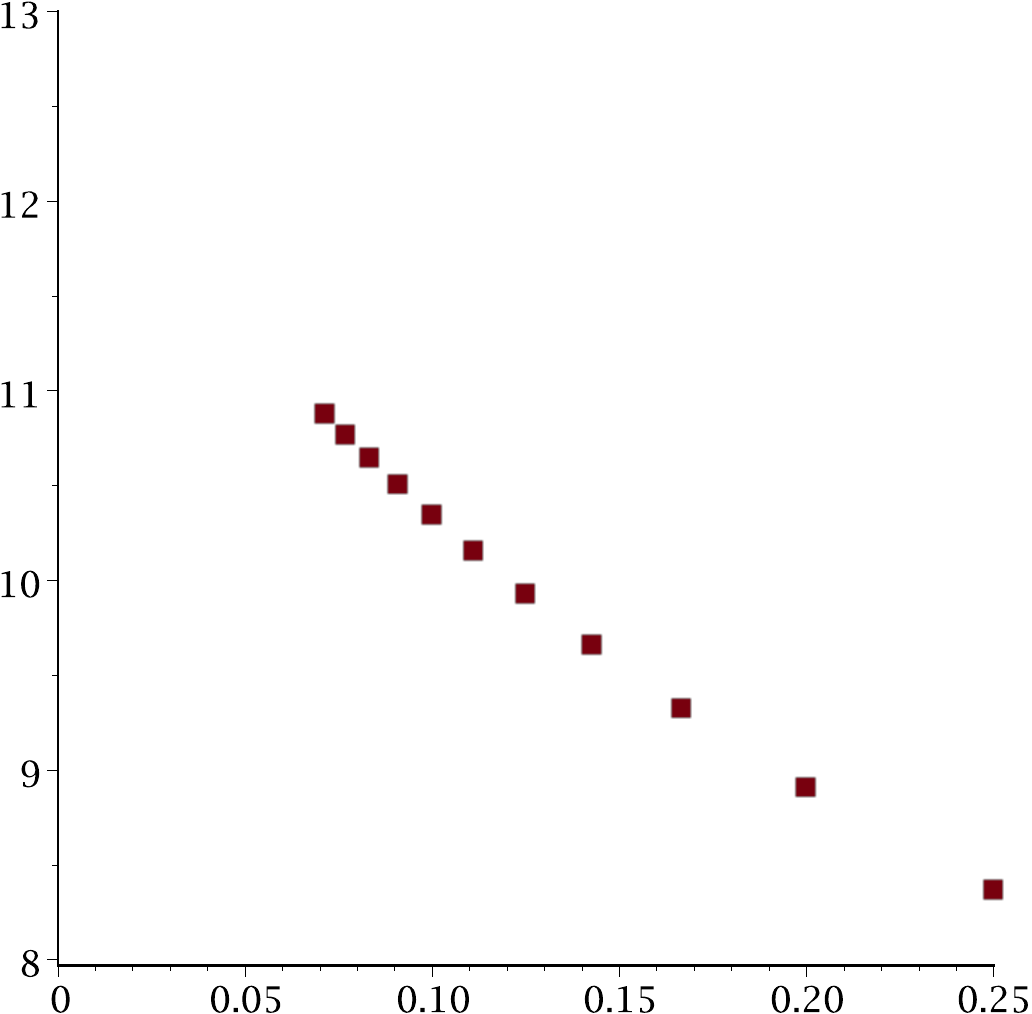}
  \end{minipage}

 \hskip 5mm
\caption{\emph{Left:} First values of $o_n$, for $n$ from 0 to
  15 (entry A277493 of the OEIS~\cite{oeis}). \emph{Right:} {A plot of $o_{n+1}/o_n$ vs. $1/n$, for $n=4, \ldots, 14$,
    suggests that the growth rate of Eulerian orientations, located
    at the intercept of the curve and the $y$-axis, is
    around $12.5$.}}
\label{tab:values}
\end{figure}

Sections~\ref{sec:classic-lower} and~\ref{sec:prime-lower} deal with two families of
subsets of Eulerian orientations. The first family uses our first
recursive decomposition of Eulerian orientations, and should be
considered as a warm up.  The second family
uses the variant of the standard decomposition of orientations. Its study is a bit more involved, but it gives
better bounds on the growth constant.  Both families are proved to have algebraic
\gfs\ and a tree-like asymptotic behaviour in $\la^n n^{-3/2}$. The
next two sections deal with two families of
supersets of Eulerian orientations. Both are proved to have algebraic
\gfs, and we conjecture a map-like asymptotic behaviour in  $\la^n
n^{-5/2}$. We solve our systems of equations explicitly for small
values of $k$, and thus obtain  Table~\ref{tab:general}.
All calculations are supported by {\sc Maple} sessions
  available on our web pages.
We gather in Section~\ref{sec:final} a few final comments and questions.

\begin{table}[h]
\caption{Growth rates and
  cardinalities of subsets ($\mathcal L^{(k)}$ and $\mathbb L^{(k)}$) and supersets ($\mathcal U^{(k)}$ and
  $\mathbb U^{(k)}$) of
  Eulerian orientations. 
The table also records the degrees of the associated \gfs, which are
  systematically algebraic. The symbol $\simeq$ refers to a numerical
  estimate. The other growth rates are algebraic numbers known
  exactly via their minimal polynomial.}

\begin{tabular}{|c|c|c|c|c|c|c|c|c|c|}
\hline
 & degree & growth & 1 & 2 & 3 & 4 & 5 & 6&7\\
\hline
Eulerian maps &2 & $8$ & 1& 3& 12& 56 & 288 & 1 584 &9 152\\
\hline
$\mathcal{L}^{(1)}$&2 &$9.68\ldots$ & \bf 2 &\bf 10&\bf 66 &466 &3 458
&26 650& 211 458\\
$\mathcal{L}^{(2)}$&4  & $10.16\ldots$ & \bf 2 &\bf 10 &\bf 66 &\bf 504
&4 008 &32 834&275 608 \\
\hline
$\mathbb{L}^{(1)}$ &3 & $ 10.60\ldots$ & \bf 2 & \bf 10 & \bf 66 & 490 &
3 898 & 32 482& 279 882\\
$\mathbb{L}^{(2)}$ & 6 & $10.97\ldots$ & \bf 2 & \bf 10 & \bf 66 & \bf
504 & 4 148 & 35 794 &319 384\\
$\mathbb{L}^{(3)}$ &20 & $11.22\ldots$ & \bf 2 & \bf 10 & \bf 66 & \bf
504 & \bf 4 216 & 37 172 & 339 406 \\
$\mathbb{L}^{(4)}$&258  & $\simeq 11.41 $&\bf 2& \bf  10& \bf  66& \bf  504&
\bf  4 216& \bf  37 548&347 850\\
$\mathbb{L}^{(5)}$& ?  &$\simeq 11.56$&\bf 2& \bf  10& \bf  66& \bf  504& \bf
4 216& \bf  37 548& \bf 350 090\\
\hline
\textbf{Eulerian orientations} && $\boldsymbol{\simeq 12.5}$ & \textbf{2} & \textbf{10} & \textbf{66} &
\textbf{504} & \textbf{4 216} & \textbf{37 548} &\bf 350 090\\
\hline
$\mathbb{U}^{(5)}$ & ? & ${\simeq 13.005}$&\bf 2& \bf  10& \bf  66& \bf  504& \bf
4 216& \bf  37 548& \bf 350 090\\
$\mathbb{U}^{(4)}$ & ? & ${\simeq 13.017}$  &\bf 2& \bf 10&\bf 66 &\bf 504 &\bf 4
216& \bf 37 548 & 350 538 \\
$\mathbb{U}^{(3)}$ & ? &{$\simeq 13.031$ } &\bf 2& \bf 10&\bf 66 &\bf 504 &\bf 4
216& 37 620 & 352 242 \\
$\mathbb{U}^{(2)}$ & 28& $13.047\ldots$ &\bf 2& \bf 10&\bf 66 &\bf 504 &4
228&37 878  & 356 252\\
$\mathbb{U}^{(1)}$&3 & $13.065\ldots$ & \bf 2 & \bf 10& \bf 66 & 506 & 4
266 & 38 418 &363 194\\
\hline
$\mathcal{U}^{(2)}$ & 27&$13.057\ldots$ & \bf 2 & \bf 10 & \bf 66 & \bf
504 & 4 232  & 37 970 &357 744\\
$\mathcal{U}^{(1)}$ &3& $13.065\ldots$ & \bf 2 & \bf 10& \bf 66 & 506 &
4 266 & 38 418 &363 194\\
\hline
Oriented Eulerian maps &2& $16$ & \bf 2 & 12 & 96 & 896 & 9 216 & 101
376 &1 171 456\\
\hline
\end{tabular}
\smallskip
\label{tab:general}
\end{table}

Let us mention that counting Eulerian orientations  of 4-valent
(rather than Eulerian) maps might be simpler: in this case, the number of Eulerian orientations is
  a specialization of the Tutte polynomial~\cite{welsh-tutte}, and in fact some
 results   exist in the physics literature~\cite{kostov,zinn-justin-6V-random}.  In
  the final section, we discuss  further this problem, which seems to
  deserve more attention.

\section{Recursive decompositions of Eulerian orientations}
\label{sec:decomp}
In this section, after a few basic definitions, we recall the standard recursive decomposition of
Eulerian \emm maps, based on the contraction of the root edge, which can be
traced back to the early papers of Tutte (e.g.~\cite{tutte-general}). We then adapt it to
decompose Eulerian \emm orientations,. We also introduce a variant of the
standard decomposition of Eulerian maps, based on a notion of \emm
prime, maps, and adapt it again to Eulerian orientations. This variant
is slightly more involved, but
turns out to be more effective: it allows us to compute the numbers
$o_n$ for larger values of $n$, and it also leads to
better lower and upper bounds on these numbers (Sections~\ref{sec:prime-lower}
and~\ref{sec:prime-upper}).

\subsection{Definitions}

A \emph{planar map} is a proper
 embedding of a connected planar graph in the
oriented sphere, considered up to orientation preserving
homeomorphism. Loops and multiple edges are allowed
(Figure~\ref{fig:euler}). The \emph{faces} of a map are the
connected components of  its complement.  The number of
 edges  of a planar map $M$ is denoted by 
$\ee(M)$.
 The \emph{degree} of a vertex 
 is the number
of edges incident to it, counted with multiplicity (e.g., a loop
counts twice).
A \emph{corner} is
a sector delimited by two consecutive edges around a vertex;
hence  a vertex 
 of degree $k$ defines $k$ corners. 
Alternatively, a corner can be described as an incidence between a
vertex and a face. 

For counting purposes it is convenient to consider \emm rooted, maps. 
A map is rooted by choosing a corner, called  the \emm root corner,.
The vertex and face that are incident at this corner are respectively
the \emm root vertex, and the \emm root face,.
 The \emm root edge, is the edge that follows the
root corner in counterclockwise 
order around the root vertex.
 In figures, we  indicate the rooting by 
 an arrow pointing to the root corner, and take the root  face
as the infinite face (Figure~\ref{fig:euler}). 

From now on, every {map}
is \emph{planar} and \emph{rooted}, and these precisions will often be
omitted. By convention,
we include among rooted planar maps the \emph{atomic map}
 having one vertex and no edge.

A map $M$ is  \emph{Eulerian} if every vertex has even
degree. In this case,  we denote by $\dv(M)$ the half degree of the
root vertex.  An \emph{Eulerian orientation} is a map with  oriented
edges,  in
which the in- and out-degrees of every vertex are equal. Note that
the underlying  map must be Eulerian. We denote by
$\mathcal M$ the set of  Eulerian maps, and by $\PEO$ the set of Eulerian orientations.

\subsection{Eulerian maps: standard decomposition}
\label{sec:dec-map-standard}

Consider  an Eulerian map $M$, not reduced to the atomic
map, and its root edge $e$.  If $e$ is a loop, then $M$ is obtained from two smaller 
maps $M_1$ and $M_2$ by joining $M_1$ and $M_2$ at their root vertices
and adding  a loop surrounding $M_1$ (Figure~\ref{fig:dec_eul},
left). The maps $M_1$ and $M_2$ are themselves Eulerian (because the
sum of vertex degrees in a map is even, so that one cannot have a
single odd vertex {in $M_1$ or $M_2$}). We call this
operation the \emm merge, of $M_1$ and $M_2$. 

If the root edge $e$ is not a loop, then we contract it, which gives
 a smaller Eulerian map $M'$. Note however that several maps
give $M'$ after contracting  their root edge. All such maps can be
obtained from $M'$ as follows (see Figure~\ref{fig:dec_eul}, right): we split the root vertex $v$ of $M'$
into two vertices $v$ and  $v'$ joined by an edge (which will be the
root edge), and distribute the edges adjacent to
$v$ between $v$ and $v'$ in such a way the degrees of $v$ and $v'$
remain even. This operation is called a \emm split, of $M'$, and more
precisely an \emm $i$-split, if $v$ has degree $2i$ in the larger
map. Note that if 
$v$ has degree $2d$ in $M'$, then $i$ can be chosen arbitrarily
between 1 and $d$.

Let $M(t;x)$ be the generating function of Eulerian maps, counted by edges (variable $t$) and by the {half degree} of the root vertex  (variable $x$): 
$$
M(t;x) = \sum_{M \in \mathcal{M}} t^{\ee(M)} x^{\dv(M) } = \sum_{d \geq 0} x^d M_{d}(t),$$
where 
$M_d(t)$ denotes the edge \gf\ of  Eulerian maps with root vertex degree~$2d$.
The above construction translates into the following functional
equation, which we explain below:
\begin{align} 
  M(t;x)&= 1 + tx M(t;x)^2 + t\sum_{d\geq 0} M_{d}(t) (x+x^2 + \cdots
  + x^d) \nonumber \\
&= 1 + tx M(t;x)^2 + t\sum_{d\geq 0} M_{d}(t) \frac{x^{d+1}-x}{x-1}\nonumber
\\&= 1 + tx M(t;x)^2 + \dfrac{tx}{x-1}(M(t;x) - M(t;1)).\label{eq-func-M}
\end{align}
On the first line, the term 1 accounts for the atomic map, the next
term for maps obtained by merging two smaller maps, and the third term
for maps obtained from a split.

\begin{figure}[h]
\centering
\includegraphics[scale=0.7]{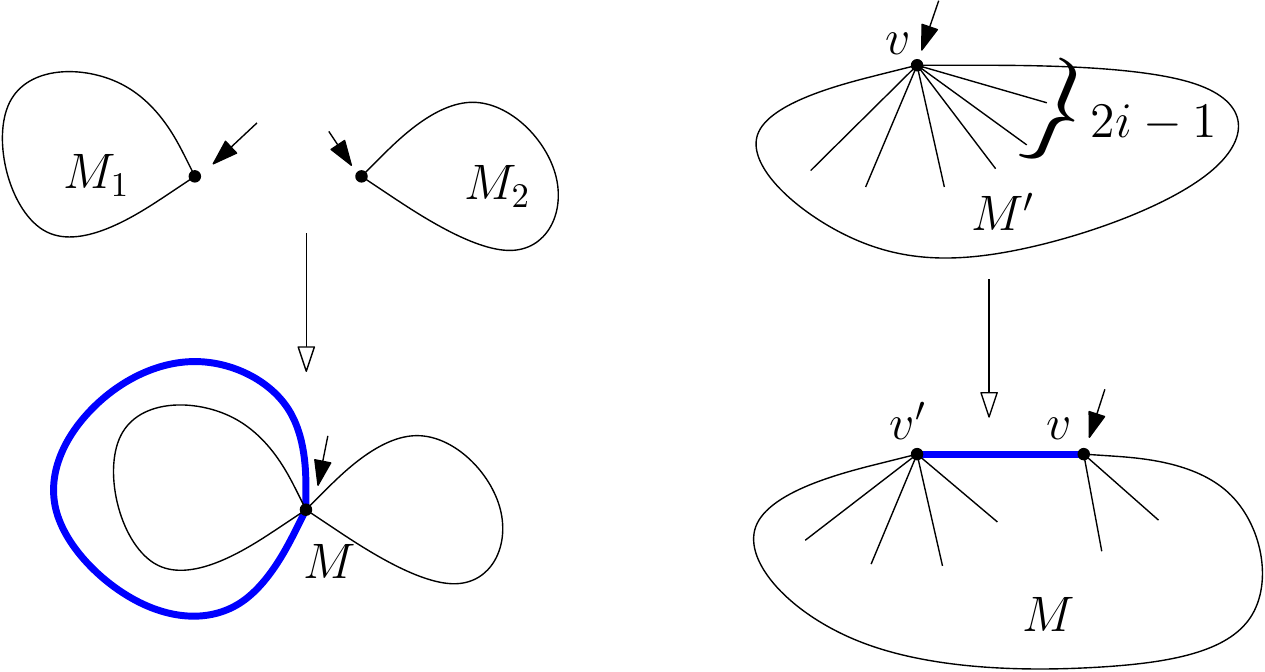}
\caption{Construction of  an Eulerian map with $n$ edges: merge
    an ordered pair of Eulerian maps $M_1$, $M_2$ with $n_1$ and $n_2$
    edges ($n_1 + n_2 =n-1$) and add a loop, or make a split on
    an Eulerian map with $n-1$ edges. The new edge (here thicker) is the root edge of $M$.}
\label{fig:dec_eul}
\end{figure}

\subsection{Eulerian orientations: standard decomposition}
\label{sec:dec-orientation-standard}
Our recursive decomposition for Eulerian orientations is essentially
the same as  for Eulerian maps: if the root edge is a loop, we
delete it and obtain two  orientations, which are both Eulerian
(in any oriented map, the sum over all vertices  of  in-degrees equals the sum of 
out-degrees, hence one cannot have a single vertex with distinct in-
and out-degrees);   otherwise we contract
the root edge, which gives a smaller Eulerian orientation.

However, care must be taken when going in the opposite direction, that
is, when constructing  large orientations from smaller ones.
The first type of orientations, obtained by a merge, do not raise any
difficulty; one can orient the new root edge (the loop) in two
different ways (Figure~\ref{fig:peo}, left). 
But consider now an Eulerian orientation $O'$, with root vertex $v$ of
degree at least $2i$, and   perform an $i$-split on $O'$: is there a
way to orient the new edge so as to obtain an orientation $O$ that is
still Eulerian? The answer is yes  if and only if  the numbers of in- and
out-edges in the last  $2i-1$ edges incident to $v$ in $O'$ differ by
 $\pm 1$ (edges are visited in counterclockwise order, starting from
the root corner). The orientation of the root
edge of $O$ is then forced (Figure~\ref{fig:peo}, right). In this case, we
say that the $i$-split, performed 
on $O'$, is  \emm legal,.   Note that the  $1$-split and the $d$-split
are always legal, where $2d$ is the degree of the root vertex of $O'$.

The fact that not all splits are legal makes it difficult to write a
single functional equation for the \gf\ of Eulerian
orientations. However, we can write an \emm infinite system, of
equations relating the \gfs\ of orientations with prescribed
orientations at the root. 

Let us be more precise. 
 Given an Eulerian orientation $O$ with root vertex $v$ of
 degree $2d$, the \emph{root word} $w(O)$ of $O$  is a word of length
 $2d$ on the alphabet $\{0,1\}$  describing the orientation of the
 edges around $v$ (in counterclockwise order, starting from the root
 corner):  the $k$-th letter of $w(O)$ is 0 (resp. 1) if the $k$-th
 edge around $v$ is  in-going (resp. out-going). Note that this word
 is always \emph{balanced}, meaning that  it contains as many 0's {as}
 1's. We call a word $\gw$ \emph{quasi-balanced} if the number of 0's
 and 1's in $\gw$ differ by $\pm 1$. The length (number of letters) of
 $\gw$ is denoted by $|\gw|$, while the number of occurrences of
 the letter $a$ in it is denoted by $|\gw|_a$. We define the
 \emph{balance} of  $\gw$ to be $b(\gw) := ||\gw|_1 - |\gw|_0|$. The empty word is denoted by $\vareps$.

Now we can decide from the root word of $O'$ if the $i$-split of $O'$ is
legal: this holds if and only if the last $2i-1$ letters of $w(O')$
form a quasi-balanced word.

\begin{figure}[h]
\centering
\includegraphics[scale=1]{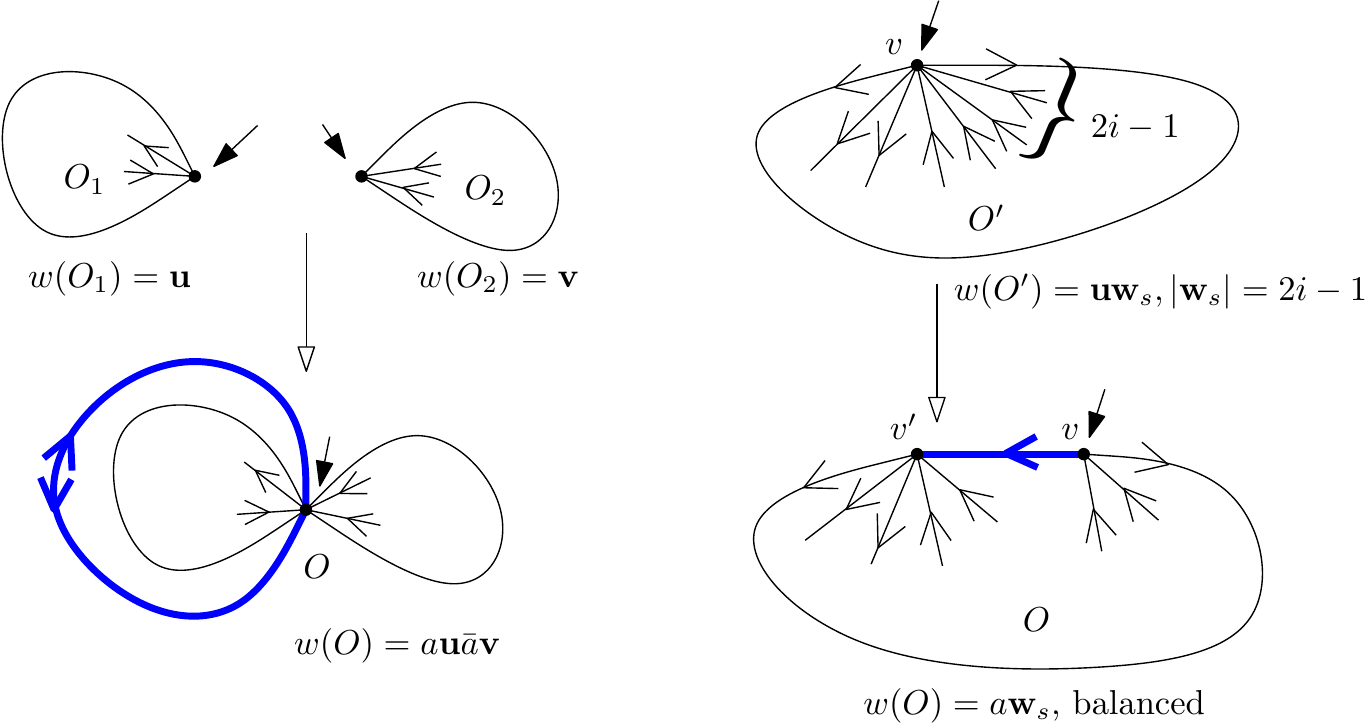}
\caption{Construction {of} an Eulerian orientation: 
    merge two Eulerian orientations (the loop  can be
    oriented in either way), or split (legally) an Eulerian
    orientation. Observe  how the root word changes.}
\label{fig:peo}
\end{figure}

For $\gw$ a word on $\{0,1\}$, let $O_\gw(t)\equiv O_\gw$ be the generating
function of Eulerian orientations having $\gw$ as root word, counted
by their edge number. Clearly,
$O_\gw=0$ if $\gw$ is not balanced and $O_\vareps=1$ (accounting for
the atomic map). Now if $\gw$ is non-empty and balanced,
\beq\label{eq-O}
O_\gw=	t \sum_{a\gu\ba \gv=\gw} O_{\gu} O_{\gv} + t \sum_{\gu}
O_{\gu\gw_s} .
\eeq
This identity is illustrated in Figure~\ref{fig:peo}.
Here, $a$ stands for any of the letters 0, 1, and the first sum runs
over all factorisations of $\gw$ of the form $a\gu\ba \gv$, with
$\bar{a}:=1-a$. This sum
counts orientations obtained by a merge.
The second sum runs over all possible words $\gu$, and $\gw_s$ denotes
the suffix of $\gw$ of length $|\gw|-1$. This sum counts
orientations obtained by a (legal) split of an orientation having root word
$\gu \gw_s$. 
Now  the \gf\ $O$ of Eulerian orientations is
$$
\sum_\gw
O_\gw ,
$$
where the sum runs over all (balanced) words $\gw$.

 We do not know how to solve this system.  But a map with $n$
edges has a root word of length at most $2n$, and hence we
can use our system to compute the numbers $o_n$ for $n$ small. We
obtain in this way the first 11 values of
Figure~\ref{tab:values}.

In Sections~\ref{sec:classic-lower} to~\ref{sec:prime-upper}, we define subsets and supersets of
$\PEO$ that we can generate by just looking at the last $2k-1$ letters
of the root word (for $k$ fixed). This allows us to write finitely
many equations for the \gfs\ of these subsets and supersets. Solving
them gives lower and upper bounds on the growth rate of Eulerian
orientations. However, we obtain more precise bounds by using a
variant of the standard  decomposition of maps and orientations. We
now present this variant.

\subsection{Prime decomposition of maps and orientations}
\label{sec:prime-dec}
A (non-atomic) map is said to be \emph{prime} if the root vertex appears only once when
walking around  the root face. A  planar map $M$ can  be seen as  a
sequence of prime maps $M_1,  \ldots, M_\ell$
(Figure~\ref{fig:prime_decomp}).  We say that the $M_i$ are the \emm prime
submaps, of $M$, and denote  $M = M_1 \cdots M_\ell$. Note that if $M$
is Eulerian, then each $M_i$ is Eulerian too.

\begin{figure}[h]
\centering
\includegraphics[scale=0.7]{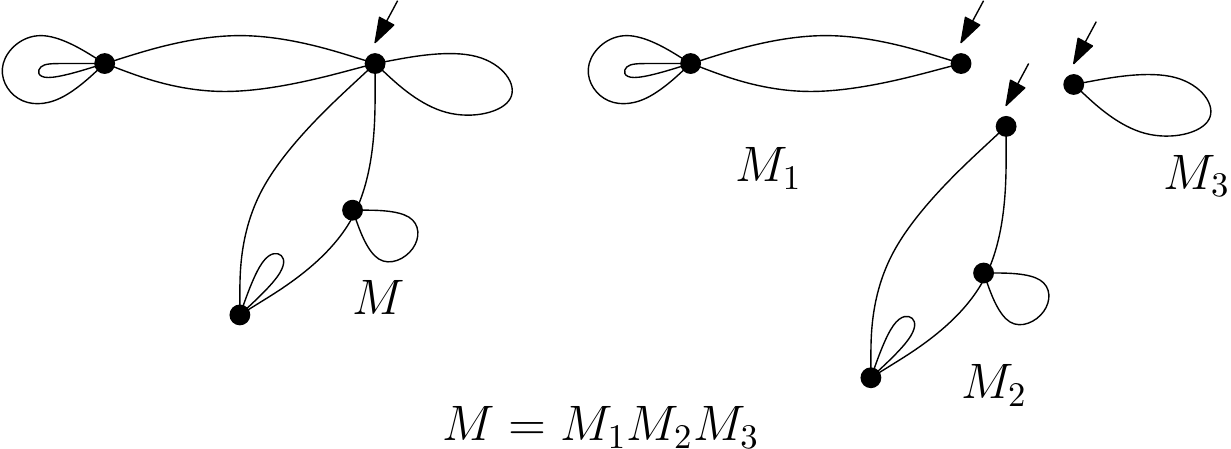}
\caption{{Decomposition of an Eulerian map $M$ into prime Eulerian maps $M_1$, $M_2$, $M_3$.}}
\label{fig:prime_decomp}
\end{figure}

Now take a \emm prime, Eulerian map $M$, and apply the standard decomposition of
Section~\ref{sec:dec-map-standard}, illustrated in Figure~\ref{fig:dec_eul}: either $M$ is an (arbitrary) Eulerian map $M_1$
surrounded by a loop, or $M$ is obtained by an $i$-split in another
Eulerian map $M'$, \emm provided the last prime submap of $M'$ (in
counterclockwise order) has
root degree at least $2i$, (otherwise, the resulting map
  would not be prime). Alternatively, if $M'= M_1' \cdots M_\ell'$,
we can obtain $M$ by performing an $i$-split in the prime map $M_\ell'$, and
attaching the map $M'':=M_1' \cdots M_{\ell-1}'$ at the new vertex $v'$
created by this split (Figure~\ref{fig:rec-prime}).

\begin{figure}[h]
\centering
\includegraphics[scale=1]{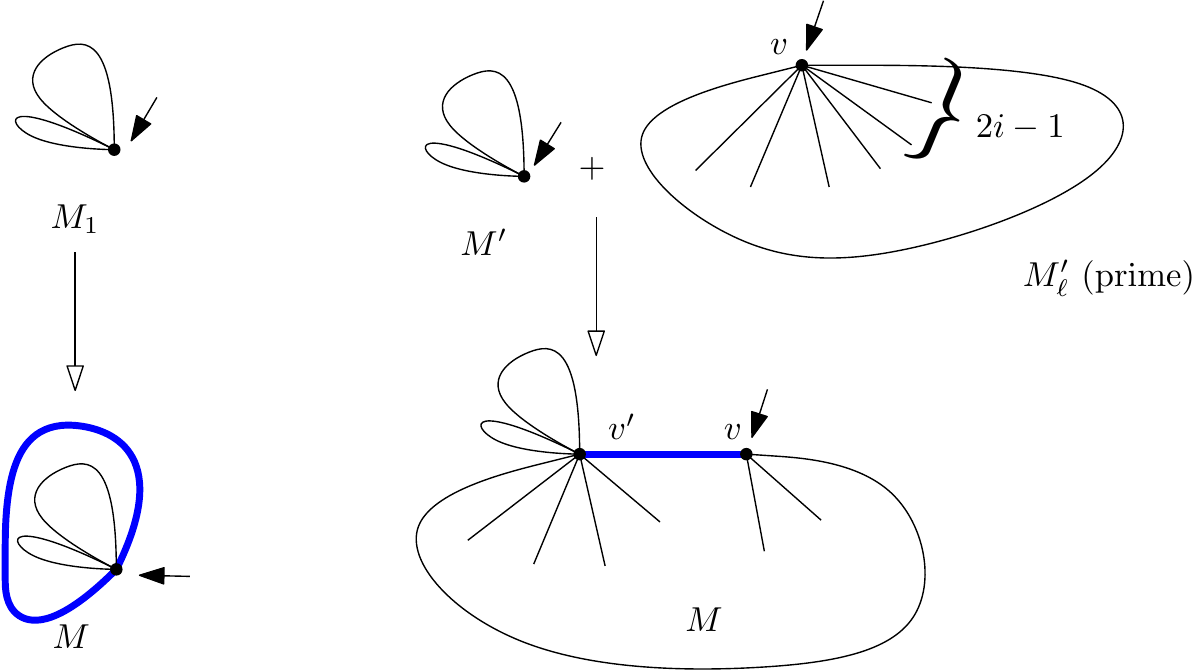}
\caption{Construction of a prime Eulerian map: add a loop around any
  Eulerian map, or split a  prime Eulerian map, and attach an arbitrary
  Eulerian map at the end of its root edge.}
\label{fig:rec-prime}
\end{figure}

This alternative decomposition of Eulerian maps gives a system of two
equations defining the \gf\  $M(t;x)$ of Eulerian maps (still counted
by edges and root vertex degree) and its counterpart $M'(t;x)$ for prime maps:
\begin{align*} 
M(t;x) &= 1+ M(t;x) M'(t;x), \\
M'(t;x) &= tx M(t;x) + t xM(t;1) \dfrac{M'(t;x) - M'(t;1)}{x-1}.
\end{align*}
In the first equation, the term $M'(t;x)$ accounts for the last prime
submap attached at the root vertex (denoted $M_\ell$ above). In the second equation, the divided difference $(M'(t;x)-M'(t;1))/(x-1)$ has the same explanation as
in~\eqref{eq-func-M}. This equation is easily recovered by eliminating $M'(t;x)$
from the above system.

\medskip
This decomposition can also be applied to Eulerian orientations: an
Eulerian orientation is a sequence  of prime Eulerian orientations,
and a prime orientation is either obtained by adding an oriented loop
around another orientation, or by performing a legal split in a prime
orientation, and attaching another orientation at the vertex $v'$
created by the split.

Thus, denoting again by  $O_\gw$ the \gf\ of orientations with root
word $\gw$, and by $O'_\gw$ its counterpart for prime orientations, we
now have $O_\gw=O'_\gw=0$ if $\gw$ is not balanced, $O_\vareps=1$,
$O'_\vareps=0$ and finally for $\gw$ balanced and non-empty,
\begin{align*}
  O_\gw&= \sum _{\gu \gv=\gw}O_\gu O'_\gv,
\\
O'_\gw &= t O_{\gw_c}+ t O \sum_{\gu} O'_{\gu\gw_s}.
\end{align*}
In the second equation,  $\gw_c$ denotes the central factor or
$\gw$ of length $|\gw|-2$, and $O=\sum_\gw O_\gw$ is the \gf\ of
all Eulerian orientations. Recall that $\gw_s$ is the suffix of $\gw$
of length $|\gw|-1$.

Using these equations, we have been able to push further the
enumeration of Eulerian orientations of small size, thus obtaining the
values of Figure~\ref{tab:values}.

\section{Subsets of Eulerian orientations, via the standard decomposition}
\label{sec:classic-lower}
In this section and the three following ones, we define certain  subsets and supersets of Eulerian
orientations, indexed by an integer $k$, which converge (monotonously)
to the set
$\PEO$ of all Eulerian orientations as $k$ tends to infinity.
Those sets are respectively denoted by $\peom$ (and $\pinf$), $\peop$ (and $\psup$), as they give
 \emm lower, and \emm  upper, bounds on the numbers $o_n$ and their
 growth rate. For each set, we give a system of functional
 equations defining its \gf: for
the subsets $\peom$ and $\pinf$, these systems are algebraic, so that the
associated \gfs\ are algebraic series. For the supersets $\peop$ and
$\psup$, the systems define 
bivariate series  and involve divided differences as
in~\eqref{eq-func-M}. However, we prove that the resulting series are also
algebraic.

\medskip
Recall from Figure~\ref{fig:peo} that 
planar Eulerian orientations can be  obtained recursively from the atomic map by either:
\begin{itemize} 
\item the merge of two orientations $O_1, O_2 \in \PEO$ (with the root
  loop oriented in either way),
\item or a legal split on an orientation $O' \in \PEO$.
\end{itemize}

\begin{definition} \label{def:m}
Let $k\ge 1$. Let $\peom$ be the set of planar orientations obtained
recursively from the atomic map by either:
\begin{itemize} 
\item the merge of two orientations $O_1, O_2 \in \peom$ (with the root
  loop oriented in either way),
\item or a legal $i$-split on an orientation $O' \in \peom$ such that  $i\leq k$ or $i=\dv(O') $.
\end{itemize}
In other words, the only allowed splits are the \emm small, splits
($i\le k$) and the \emm maximal, split ($i=\dv(O') $).
\end{definition}

Obviously, all orientations of $\peom$ are Eulerian. Moreover, the sets $\peom$
form an increasing sequence since more and more (legal) splits are
allowed as $k$ grows. Finally, all Eulerian orientations of
  size $n$ belong to $\mathcal L^{(n)}$ (and even to $\mathcal L^{(n-2)}$). Hence  the
limit of the sets $\peom$ is the set $\PEO$ of all Eulerian
orientations.

{Figure~\ref{fig:inf-1} shows a (random) orientation of $\mathcal
L^{(1)}$.}

\begin{figure}[h]
\centering
\includegraphics[angle=90,scale=0.85]{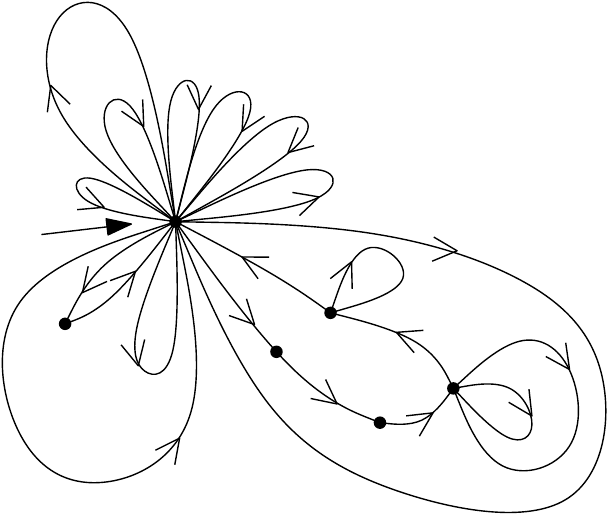}
\caption{An Eulerian orientation in $\mathcal L^{(1)}$, taken
  uniformly at random among those with $20$ edges.}
\label{fig:inf-1}
\end{figure}

\subsection{An algebraic system for $\boldsymbol\peom$}
\label{sec:alg-Lc}
In this section, $k$ is a fixed integer.
\begin{definition}\label{def:valid}
   A word $\gw$ on $\{0,1\}$ is  \emph{valid} (for $k$) if
there exists  a balanced word of length $2k$ having $\gw$ as a
factor. Equivalently,
the balance of $\gw$ satisfies $b(\gw) \leq 2k-|\gw|$. This holds
automatically if $|\gw| \le k$.
\end{definition}
Given a word $\gw$, it will be convenient to have notation for several
words that differ from $\gw$ by one or two letters. We have already
defined $\gw_c$, the central factor of $\gw$ of length
$|\gw|-2$, and $\gw_s$, the suffix of $\gw$ of length $|\gw|-1$. We 
similarly define $\gw_p$ as the  prefix of $\gw$ of  length $|\gw|-1$. Finally, if $\gw$ is quasi-balanced, then
$\overleftarrow{\gw}$ stands for the unique balanced word of the form $a\gw$,
for $a\in \{0,1\}$.

For any word $\gw$, we denote  by $\Lc^{(k)}_\gw(t)$ 
the generating function of orientations of $\peom$  whose root word ends with $\gw$, counted by
edges. In particular, the \gf\ counting all orientations of $\peom$ is
$\Lc^{(k)}_\vareps(t)$. We denote by $\Kc^{(k)}_\gw(t)$
 the generating
function of orientations of $\peom$ having root word exactly~$\gw$. In order to
lighten notation, we often
omit the dependence of our series in  $t$ and the superscript $(k)$.

We now give equations defining the series $\Lc_\gw$ (for  $|\gw|\le 2k-1$) and
$\Kc_\gw$ (for $|\gw|\le 2k$). First, we note that $\Kc_\gw=0$ if $\gw $
is not balanced, and that $\Kc_\vareps=1$. Now for $\gw$  balanced of
length between 2 and  $2k$, we have:
\begin{equation}\label{F-l-c}
  \Kc_\gw=	t \sum\limits_{\gw =a \gu \bar{a} \gv }
 \Kc_{\gu} \Kc_{\gv} + t \Lc_{\gw_s},
  \end{equation}
where, as before, $a$ is any of the letters $0,1$.
This equation is analogous
to~\eqref{eq-O}: the first term counts orientations obtained
from a merge, the second  orientations obtained from a split.
Now for $\Lc_\gw$, with $\gw$ of length at most $2k-1$, we have:
\begin{multline}\label{L-l-c2}
  \Lc_\gw= \mathbbm{1}_{\gw=\vareps}
+2 t \Lc_\varepsilon \Lc_\gw + 
	t\sum\limits_{\gw = \gu a \gv}
\Lc_{\gu} \Kc_{\gv} +
	t\sum\limits_{\gw = a \gu \bar{a} \gv }
 \Kc_{\gu} \Kc_{\gv} \\+ 
	t \left(\Lc_\gw - \mathbbm{1}_{\gw=\vareps}\right)
+ 	t\sum\limits_ {\substack{\gu = \gv \gw \\
 	2\le|\gu| \le 2k \\ \gu \text{ balanced}}} (\Lc_{\gu_s} -
 	\Kc_{{\gu}})  
.
   \end{multline}
This equation deserves some explanations.  The first line counts
the atomic map (if $\gw=\vareps$), and the orientations  obtained by a merge. The
second (resp. third, fourth) term
of this line counts orientations such that no (resp. one, both) half-edge(s) of
the root loop is/are involved in the suffix $\gw$ of the root
word. Equivalently, denoting by $O_1$ and $O_2$ the merged
orientations, 
those three terms respectively correspond to $|\gw|\le 2\dv(O_2)$, $
2\dv(O_2)<|\gw| < 2+ 2\dv(O_1) + 2\dv(O_2)$ and $|\gw| =2+ 2\dv(O_1) +
2\dv(O_2)$. 

The second line counts orientations $O$ obtained by a legal $i$-split
in a smaller orientation~$O'$. The first term 
 accounts for maximal
splits ($i=\dv(O')$), which, we recall, do not change the root
word (note also that no split is possible on the atomic map). The second term  counts
orientations $O$  obtained from a
non-maximal split. The
word $\gu$ stands for the  root word of
$O$.  The subtraction of $
\Kc_{{\gu}}$ comes from the condition that the split is
not maximal.

\begin{prop}\label{prop:l-c}
  Consider the collection of equations consisting of:
  \begin{itemize}
  \item Equation~\eqref{F-l-c}, written for all balanced words $\gw$
    of length between $2$ and $2k$,
\item Equation~\eqref{L-l-c2}, written for all  valid words $\gw$
of  length at most $ 2k-1$.
  \end{itemize}
In this collection, replace all trivial $\Kc$-series by their value:
$\Kc_\gw=0$ when $\gw$ is not balanced, $\Kc_\vareps=1$. Let $S_0$
denote the resulting system. The number of
series it involves is
\beq\label{ek-def}
\e(k)={2k+2 \choose k+1}-1+\sum_{i=1} ^{k-1} {2i\choose i}.
\eeq
The system $S_0$ defines uniquely these $\e(k)$ series. Its size can be (roughly) divided by two upon noticing that replacing
all $0$'s by $1$'s, and vice-versa, in a word $\gw$, does not change
the series $\Lc_\gw$ nor $\Kc_\gw$. 
\end{prop}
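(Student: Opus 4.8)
The plan is to verify the claimed count $\e(k)$ of series by a direct enumeration of the index words appearing, and then to establish existence and uniqueness of the solution by exhibiting a triangular structure on the system that lets one solve for the series one at a time. Concretely, I would first enumerate the $\Kc$-series that remain after eliminating the trivial ones: these are $\Kc_\gw$ for $\gw$ balanced and nonempty of length between $2$ and $2k$, i.e. of length $2j$ for $1 \le j \le k$, of which there are $\binom{2j}{j}$ each. Summing gives $\sum_{j=1}^{k}\binom{2j}{j}$. Next I would count the $\Lc$-series: $\Lc_\gw$ ranges over all \emph{valid} words $\gw$ of length at most $2k-1$. Here one must be careful — validity (Definition~\ref{def:valid}) is exactly the condition $b(\gw)\le 2k-|\gw|$, equivalently $\gw$ occurs as a factor of some balanced word of length $2k$ — and counting these words is the combinatorial heart of the bookkeeping. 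I would count valid words of each length $m$ for $0\le m\le 2k-1$ by a reflection/lattice-path argument on the partial-sum walk, and then check that the total, combined with the $\Kc$-count, collapses to the stated closed form $\binom{2k+2}{k+1}-1+\sum_{i=1}^{k-1}\binom{2i}{i}$. I expect this reconciliation of the two binomial sums to be the main bookkeeping obstacle, though it is elementary.

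For the uniqueness (and existence) assertion, I would argue by induction on the edge number $n$: all series here are formal power series in $t$ with $\Kc_\vareps = 1$ and all other series having zero constant term (since every nonatomic orientation has at least one edge, and the atomic map has the empty root word). Writing $\Kc_\gw = \sum_n \Kc_\gw(n) t^n$ and likewise for $\Lc_\gw$, I would show that the coefficient of $t^n$ on the right-hand side of \eqref{F-l-c} and \eqref{L-l-c2} involves only coefficients $\Kc_\bullet(m)$, $\Lc_\bullet(m)$ with $m < n$ — the overall factor of $t$ in every term on the right, together with the fact that the only term without a strictly smaller index is the $2t\Lc_\varepsilon\Lc_\gw$ term in \eqref{L-l-c2} and the linear $t(\Lc_\gw-\mathbbm 1_{\gw=\varepsilon})$ term, both of which still carry a factor $t$ — guarantees that $\{\Kc_\gw(n)\}$ and $\{\Lc_\gw(n)\}$ are uniquely determined from the coefficients of lower degree. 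Here one subtlety to check is that every index appearing on the right-hand sides actually lies among the $\e(k)$ series the system is claimed to involve: in \eqref{F-l-c}, the factorisations $\gw = a\gu\bar a\gv$ produce shorter words $\gu,\gv$ which are automatically valid, and $\gw_s$ has length $\le 2k-1$ and is valid; in \eqref{L-l-c2}, the sum over $\gu = \gv\gw$ with $|\gu|\le 2k$ balanced produces $\Lc_{\gu_s}$ with $|\gu_s|\le 2k-1$ valid and $\Kc_\gu$ balanced of length $\le 2k$, so indeed the system is closed. This closure check, while routine, is the thing that has to be gotten exactly right.

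Finally, for the symmetry remark reducing the size by (roughly) a factor of two, I would note that the involution $\sigma$ on words swapping $0\leftrightarrow 1$ induces a bijection on $\peom$ (reverse the orientation of every edge: a legal $i$-split stays legal, a merge stays a merge, and the root word of the image is $\sigma$ applied to the original root word), whence $\Kc_{\sigma(\gw)} = \Kc_\gw$ and $\Lc_{\sigma(\gw)} = \Lc_\gw$ as series; the fixed points of $\sigma$ on words of length $m$ are empty unless $m=0$, so the orbits have size two except for $\varepsilon$, which is why the reduction is ``roughly'' and not exactly a halving. One can also observe directly that the system $S_0$ is stable under $\sigma$ (applying $\sigma$ to all indices permutes the equations), so identifying $\Kc_\gw$ with $\Kc_{\sigma(\gw)}$ and $\Lc_\gw$ with $\Lc_{\sigma(\gw)}$ yields a well-defined smaller system with the same solution. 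I do not expect any genuine difficulty here beyond making the claim ``the same series'' precise via the orientation-reversing involution.
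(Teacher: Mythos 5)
Your proposal matches the paper's own proof in all essentials: the same direct enumeration of the nonempty balanced words (for the $\Kc$-series) and of the valid words of length at most $2k-1$ (for the $\Lc$-series), the same closure-plus-factor-of-$t$ induction on the coefficient of $t^n$ for existence and uniqueness, and the orientation-reversal involution for the $0/1$ symmetry (which the paper merely asserts). The only piece you defer --- reconciling the two binomial sums to the closed form $\binom{2k+2}{k+1}-1+\sum_{i=1}^{k-1}\binom{2i}{i}$ --- is exactly the part the paper dispatches with classical summation identities, and it does go through.
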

\begin{proof}
  To see that $S_0$ defines all the series that it involves, it
  suffices to note the factor $t$ in the right-hand sides of~\eqref{F-l-c} and~\eqref{L-l-c2}, and to check that each series occurring in the right-hand side
  of some equation also occurs as the left-hand side of another. This
  is readily done, as any factor of a valid word is still valid.

Let us now count the equations of the system. The number of non-empty
balanced words of length at most $2k$ is 
$$
\sum_{i=1}^k {2i\choose i}.
$$
 Then,
all words of length at most $k$ are valid, while the number of
valid words  of
length $k+i$, for $1\le i \le k-1$, is
$$
\sum_{j=i}^k {k+i \choose j}.
$$
(One can interpret $j$ as the number of occurrences of $0$ in the
word.) Hence the number of equations in the system is
$$
\e(k)=\sum_{i=1} ^k {2i\choose i} + \sum_{i=0}^k 2^i + \sum_{i=1}^{k-1} \sum_{j=i}^k
  {k+i \choose j}. 
$$
The second sum evaluates to $2^{k+1}-1$. The third one is
\begin{align*}
  \sum_{j=1}^k \sum_{i=1}^{\min(j,k-1)}  {k+i \choose j}&=
\sum_{j=1}^{k-1} \sum_{i=1}^{j}  {k+i \choose j}+ \sum_{i=1}^{k-1}
    {k+i \choose k}\\
&=
\left(1+ {2k+1 \choose k} - 2^{k+1} \right) +\left( \frac k{k+1}
    {2k\choose k} -1\right)
\\&= \frac {3k+1}{k+1}{2k \choose k} -2^{k+1}.
\end{align*}
The sums are evaluated using classical summation identities, or
Gosper's algorithm~\cite{AB}. The expression of $\e(k)$ given in the
proposition then  follows after elementary manipulations.
\end{proof}

\medskip
\begin{remark}\label{forward}
If $\gw$ is such that $0\gw$ and $1\gw$ are both valid of length less
than $2k$,  we can 
define $\Lc_\gw$ by a simpler  "forward" equation, without increasing the
size of the system:
\begin{equation}\label{L-l-c1}
\Lc_\gw= \Kc_\gw + \Lc_{0\gw}+ \Lc_{1\gw}.
\end{equation}
This is obviously smaller than~\eqref{L-l-c2}, and possibly better suited to
feed a computer algebra system.
However, mixing equations of type~\eqref{L-l-c2} and~\eqref{L-l-c1}  makes
some proofs of Section~\ref{sec:asympt-c} heavier. 
\end{remark}
\subsection{Examples}\label{sec:ex-c}
\subsubsection{When $\boldsymbol{k=1}$,}
the system $S_0$ 
contains $\e(1)=5$ equations and reads
\beq\label{syst-L-1}
\begin{cases}
\Kc_{01} \!\! \!\!&  =\  t\Kc_\varepsilon \Kc_\varepsilon + t\Lc_1, \\
\Kc_{10} \!\!\!\! &  =\  t\Kc_\varepsilon \Kc_\varepsilon + t\Lc_0,
\\
\Lc_\varepsilon \!\! &  =\  1+ 2t \Lc_\vareps \Lc_\vareps+ t(\Lc_\vareps-1) + t (\Lc_0-K_{10}+\Lc_1-K_{01}),
\\
\Lc_0 \!\! &  =\  2t \Lc_\varepsilon \Lc_0 + t\Lc_\varepsilon \Kc_\varepsilon + t\Lc_0 + t(\Lc_0-\Kc_{10}), \\
\Lc_1 \!\! &  =\  2t \Lc_\varepsilon \Lc_1 + t\Lc_\varepsilon \Kc_\varepsilon + t\Lc_1 + t(\Lc_1-\Kc_{01}), 
\end{cases}
\eeq
with $\Kc_\varepsilon =1$.
Using the 0/1 symmetry, this system can be compacted into
$$
\begin{cases}
\Kc_{01} \!\!\!\!& = \  t + t\Lc_0,
\\
\Lc_\varepsilon \!\!\!\!& = \  1+ 2t \Lc_\vareps \Lc_\vareps+ t(\Lc_\vareps-1) + 2t(\Lc_0-K_{01}),
 \\
\Lc_0 \!\!\!\!& = \  2t \Lc_\varepsilon \Lc_0 + t\Lc_\varepsilon + t\Lc_0 + t(\Lc_0-\Kc_{01}).
\end{cases}
$$
The variant mentioned in Remark~\ref{forward} consists in replacing
the {second} equation by $\Lc_\varepsilon=1+ 2\Lc_0$.  The reader may check that this is
consistent with the above system.

Eliminating $\Lc_0$ and $\Kc_{01}$ gives a quadratic
equation for the \gf\ $\Lc_\vareps= \Lc_\vareps^{(1)}$ of Eulerian
orientations in $\mathcal L^{(1)}$:
\begin{equation} \label{eq:eq_inf_1}
2t\Lc_\varepsilon^2-\Lc_\varepsilon(1-t)^2-t^2-2t+1=0.
\end{equation}
We defer to Section~\ref{sec:asympt-c} the study of the asymptotic behaviour of
its coefficients.

\subsubsection{When $\boldsymbol{k=2}$,}
the system $S_0$ 
contains $\e(2)=21$ equations, or 11 if we exploit the 0/1 symmetry:
\beq\label{eq:syst_inf_2}
\begin{cases}
\Kc_{10} =\Kc_{01}\!\!\!\! & = \  t + t\Lc_0, \\
\Kc_{1100} \!\!\!\! & = \  t\Kc_{10}  + t \Lc_{100}, \\
\Kc_{1010} \!\!\!\! & = \  t( \Kc_{10} + \Kc_{01} ) + t \Lc_{010}, \\
\Kc_{0110} \!\!\!\! & = \  t \Kc_{10} + t \Lc_{110} ,
\\
\Le \!\!\!\! & = \  1+2t\Lc_\vareps \Lc_\vareps + t(\Lc_\vareps-1) + 2t (\Lc_0-K_{10}
+\Lc_{100}-K_{1100}
\\& \hskip 70mm 
+ \Lc_{010}-K_{1010} + \Lc_{110}-K_{0110}),
\\
\Lc_0 = \Lc_1\!\!\!\! & = \   2t \Lc_\vareps \Lc_0 + t\Lc_\vareps + t \Lc_0 + t(\Lc_0-K_{10}
+\Lc_{100}-K_{1100}
\\ &\hskip 70mm 
+ \Lc_{010}-K_{1010} + \Lc_{110}-K_{0110}),
\\
\Lc_{00} = \Lc_{11}\!\!\!\!& = \  2t\Le \Lc_{00} + t\Lc_0  + t\Lc_{00} + t(\Lc_{100}-\Kc_{1100}), \\
\Lc_{10} = \Lc_{01}\!\!\!\!& = \  2t \Lc_\vareps \Lc_{10} + t \Lc_1 + t + t\Lc_{10} +
t (\Lc_0-K_{10} +\Lc_{010}-K_{1010} + \Lc_{110}-K_{0110}),
\\
\Lc_{100} \!\!\!\! & = \  2t\Le \Lc_{100} + t\Lc_{10}  + t \Lc_{100} + t(\Lc_{100}-\Kc_{1100}) ,\\
\Lc_{010} \!\!\!\! & = \  2t\Le \Lc_{010} + t(\Lc_{01}  + \Lc_\vareps \Kc_{10}) + t\Lc_{010} +
t(\Lc_{010}-\Kc_{1010}) , \\
\Lc_{110} \!\!\!\! & = \  2t\Le \Lc_{110} + t(\Lc_{11}  + \Lc_\vareps \Kc_{10}) + t \Lc_{110} + t(\Lc_{110}-\Kc_{0110}).
\end{cases}
\eeq
The variant mentioned in Remark~\ref{forward} consists in replacing
the  equations defining  $\Lc_\varepsilon$, $\Lc_0$ and $\Lc_{10}$ by 
$\Lc_\vareps=1+ 2\Lc_0$, $ \Lc_0=\Lc_{00}+ \Lc_{10}$ and $\Lc_{10}= \Kc_{10}+
\Lc_{010} + \Lc_{110}$ respectively. 

Eliminating all series but $\Lc_\vareps$ gives a quartic
equation for the \gf\ $\Lc_\vareps= \Lc_\vareps^{(2)}$ of Eulerian
orientations in $\mathcal L^{(2)}$:
 \begin{multline}\label{eq:eq_inf_2}
8  t^3\Lc_\vareps^4-4 t^2 (3 t^3+4 t^2-6 t+3) \Lc_\vareps^3+2 t (3 t^5-12 t^4-10 t^3+14 t^2-10 t+3) \Lc_\vareps^2\\+(t-1) (11 t^5-10 t^4-6 t^3-3 t^2-t+1) \Lc_\vareps+(t-1) (5 t^5-4 t^4+6 t^3-7 t^2+5 t-1)
=0.
 \end{multline}
We defer to Section~\ref{sec:asympt-c} the study of the asymptotic behaviour of
its  coefficients.

\subsection{Asymptotic analysis for subsets of Eulerian orientations}
\label{sec:asympt-c}
Here, we apply the theory of \emm positive irreducible polynomial
systems~\cite[Sec.~VII.6]{flajolet-sedgewick}, to prove the following asymptotic result. 

\begin{prop}\label{lem:prop_systm_prev}
For $k\ge 1$, let $\rho_k$ denote the radius of convergence of the
series $\Lc_\vareps^{(k)}$, which counts
orientations of $\peom$. Then $\rho_k$ is the only singularity of
$\Lc_\vareps^{(k)}$ of minimal modulus, and it is of the  square root
type: as $t$ tends to $\rho_k$ from below,
$$
\Lc_\vareps^{(k)}(t)= \alpha - \beta \sqrt{1-t/\rho_k} \left(1+o(1)\right)
$$
for non-zero constants $\alpha$ and $\beta$ depending on $k$. 

The number  $\ell^{(k)}_n$  of orientations of size
$n$ in $\peom$ satisfies, as $n$ tends to infinity:
$$
\ell^{(k)}_n \sim  c \la_k^{n} n^{-3/2},
$$
where $\la_k=1/\rho_k$ and $c= -\beta/\Gamma(-1/2)$.
\end{prop}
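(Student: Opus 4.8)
The plan is to reformulate the system $S_0$ as a \emph{positive, proper, irreducible and aperiodic} polynomial system in the sense of~\cite[Sec.~VII.6]{flajolet-sedgewick} and then invoke the corresponding Drmota--Lalley--Woods theorem. That theorem delivers exactly what is needed: a common radius of convergence $\rho_k$ for all the series of the system, which is their unique dominant singularity, a square-root singular expansion there, and a coefficient estimate in $\rho_k^{-n}n^{-3/2}$ for each of them. Applying it to $\Lc_\vareps^{(k)}$ and transferring the singular expansion by singularity analysis~\cite[Sec.~VI.3]{flajolet-sedgewick} gives the announced asymptotics, the constant being $c=\beta/(2\sqrt\pi)=-\beta/\Gamma(-1/2)$ since $\Gamma(-1/2)=-2\sqrt\pi$.

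The one genuine preliminary is that Equations~\eqref{F-l-c} and~\eqref{L-l-c2} are not literally positive: they contain the subtractions $\Lc_\gw-\mathbbm{1}_{\gw=\vareps}$ and $\Lc_{\gu_s}-\Kc_\gu$. Both subtracted combinations are themselves generating functions of subclasses of $\peom$: the series $\Lc_\vareps-1$ counts the non-atomic orientations, and---since $\gu$ is the unique balanced word of the form $a\gu_s$---the series $\Lc_{\gu_s}-\Kc_\gu$ counts the orientations whose root word ends with $\gu_s$ and is strictly longer than $\gu_s$. I would therefore add one new unknown for $\Lc_\vareps-1$ and one for each such difference, substitute $\Lc_\vareps=1+(\Lc_\vareps-1)$ everywhere, and obtain an equivalent finite system $\tilde S_0$ all of whose right-hand sides have nonnegative coefficients; that $\tilde S_0$ still determines its unknowns uniquely follows exactly as in the proof of Proposition~\ref{prop:l-c}.

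It remains to check the structural hypotheses for $\tilde S_0$. \emph{Properness} is immediate: every monomial on every right-hand side carries a factor $t$, so the map defining the system is a contraction on formal power series, hence $\tilde S_0$ has a unique power-series solution with nonnegative coefficients; this solution is not polynomial, since $\peom$ already contains the $2^n$ orientations of the chain of $n$ nested loops, all produced by merges, so $\ell^{(k)}_n\to\infty$. \emph{Aperiodicity} is clear because $\Lc_\vareps^{(k)}=1+2t+10t^2+\cdots$ has two consecutive nonzero coefficients. The substantive point is \emph{irreducibility}, i.e.\ strong connectivity of the dependency graph. On the one hand, every unknown reaches $\Lc_\vareps$: the monomial $2t\Lc_\vareps\Lc_\gw$ in~\eqref{L-l-c2} shows $\Lc_\gw$ depends on $\Lc_\vareps$, and the term $t\Lc_{\gw_s}$ in~\eqref{F-l-c} shows $\Kc_\gw$ depends on $\Lc_{\gw_s}$, hence again on $\Lc_\vareps$. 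On the other hand, $\Lc_\vareps$ reaches every unknown: reading~\eqref{L-l-c2} for $\gw=\vareps$, its right-hand side involves $\Kc_\gu$ and $\Lc_{\gu_s}$ for \emph{every} balanced $\gu$ with $2\le|\gu|\le 2k$, which already covers all the $\Kc$-unknowns; and for an arbitrary valid word $\gw$ one first appends enough copies of its minority letter to reach a quasi-balanced word $\gw'$ having $\gw$ as a prefix and length at most $2k-1$ (possible because $b(\gw)\le 2k-|\gw|$ by validity), so that $\overleftarrow{\gw'}$ is a balanced word of length at most $2k$ with $(\overleftarrow{\gw'})_s=\gw'$, whence $\Lc_\vareps$ reaches $\Lc_{\gw'}$ through the last sum of~\eqref{L-l-c2}, and then reaches $\Lc_\gw$ by repeatedly passing to prefixes (the term $t\Lc_\gu$ obtained from the sum $t\sum_{\gw'=\gu a\gv}\Lc_\gu\Kc_\gv$ by taking $\gv=\vareps$). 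The auxiliary unknowns of $\tilde S_0$ inherit these connections, so $\tilde S_0$ is irreducible.

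With all hypotheses verified, the Drmota--Lalley--Woods theorem gives that $\rho_k$ is finite, is the unique singularity of minimal modulus of every solution series, and that each of them---in particular $\Lc_\vareps^{(k)}$---has there a singular expansion of the form $\alpha-\beta\sqrt{1-t/\rho_k}\,(1+o(1))$ with $\beta\ne0$; here $\alpha=\Lc_\vareps^{(k)}(\rho_k)\ge1$ and $\beta>0$, as the singular coefficient of a series with nonnegative coefficients. Singularity analysis then yields $\ell^{(k)}_n\sim\frac{\beta}{2\sqrt\pi}\rho_k^{-n}n^{-3/2}$, which is the claimed estimate with $\la_k=1/\rho_k$ and $c=-\beta/\Gamma(-1/2)$. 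The steps that demand real care are the positivity reformulation and the irreducibility argument; properness, aperiodicity and the final transfer are routine.
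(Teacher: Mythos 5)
Your strategy---positivize the system, check properness, aperiodicity and irreducibility, then invoke the Drmota--Lalley--Woods theorem and transfer---is exactly the paper's, and the skeleton of your irreducibility argument (every unknown reaches $\Lc_\vareps$ via the merge term; $\Lc_\vareps$ reaches every unknown by extending to a quasi-balanced word and then descending through prefixes) matches the paper's induction on the balance. Two points of execution are, however, under-verified. First, the auxiliary unknowns you list do not quite suffice for positivity: when you form the equation for $N_{\gu}=\Lc_{\gu_s}-\Kc_{\gu}$ by subtracting~\eqref{F-l-c} from~\eqref{L-l-c2}, the two merge sums combine into terms of the form $(\Lc_{\gv}-\Kc_{\gv})\Kc_{\gv'}$ where $\gv$ runs over prefixes of $\gu_s$; for balanced non-empty $\gv$ these are new negative combinations, so you must also introduce the unknowns $\Lc_{\gv}-\Kc_{\gv}$ for all balanced $\gv$ (the paper's $\Lc^+_{\gv}$). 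Your own guiding remark---that such differences count the orientations whose root word ends with $\gv$ and is strictly longer---covers them, but the claim that your $\tilde S_0$ is already positive is false as stated. Second, irreducibility must be checked on the transformed system, not on $S_0$: once $\Lc_{\gu_s}-\Kc_{\gu}$ is replaced by the single unknown $N_{\gu}$, the direct edges from $\Lc_\vareps$ to $\Kc_{\gu}$ and to $\Lc_{\gu_s}$ that you read off from the last sum of~\eqref{L-l-c2} disappear, and the sentence ``the auxiliary unknowns inherit these connections'' needs an argument; the paper recovers the edge to $\Kc_{\gu}$ through the term $2t\Lc_\vareps\Lc_{\gu_s}$ in the equation for $N_{\gu}$, after rewriting $\Lc_{\gu_s}=\Kc_{\gu}+N_{\gu}$. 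Neither issue changes the architecture of the proof, and both are repaired by the same ideas you already deploy, but as written these two steps would not go through literally.
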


\begin{proof}
We use the terminology of~\cite[Sec.~VII.6.3]{flajolet-sedgewick}. Our
first objective is to transform the system $S_0$ of
Proposition~\ref{prop:l-c} into a positive one. The  obstructions 
to positivity come from the expression~\eqref{L-l-c2} of $\Lc_\gw$, and more
precisely from the terms 
$\Lc_\vareps-1$ (when $\gw=\vareps$) and 
$\Lc_{\gu_s}-\Kc_{{\gu}}$, where $\gu$ is balanced. These
  terms can be written  $\Lc_\gw -
\Kc_{\overleftarrow{\gw}}$, where $\gw=\gu_s$ is quasi-balanced  and 
$\overleftarrow{\gw}$ is the unique balanced word of the form $a\gw$,
for $a\in \{0,1\}$.

This leads us to define, for $\gw$  quasi-balanced  of length
less than $2k$, the series $\Lc^+_\gw := \Lc_\gw -
\Kc_{\overleftarrow{\gw}}$. We will also need to define, for $\gw$ balanced, $\Lc^+_\gw := \Lc_\gw -
\Kc_{{\gw}}$. These series have  natural combinatorial interpretations
in terms of orientations whose root word ends \emm strictly, with $\gw$ (if
$\gw$ is balanced) or $\overleftarrow{\gw}$ (if $\gw$ is
quasi-balanced). Then we alter the original system $S_0$ as
follows.
\begin{enumerate}
\item[(i)] For $\gw$ balanced or quasi-balanced, we replace the equation~\eqref{L-l-c2}  defining
  $\Lc_\gw$ by an equation defining $\Lc^+_\gw$:
\beq\label{L-l-c2-var}
  \Lc^+_\gw= 2 t \Lc_\varepsilon \Lc_\gw + 
	t\sum\limits_{\gw = \gu a \gv}(\Lc_{\gu}-\Kc_\gu)\Kc_{\gv} + 
	t \Lc^+_\gw + 
	t\sum\limits_
{\substack{\gu = \gv \gw , \gu \not = \gw\\ |\gu| \leq 2k-1 \\ \gu \text{ quasi-balanced}}}
\Lc^+_\gu.
  \eeq
To obtain it, either we get back to the explanation of~\eqref{L-l-c2} and
remove from its  right-hand side
 the terms  that count orientations with root word exactly $\gw$ (if
 $\gw$ is balanced) or ${\overleftarrow{\gw}}$ (if $\gw$ is
 quasi-balanced). Or we 
 simply subtract from~\eqref{L-l-c2} Equation~\eqref{F-l-c}, written for
 $\gw$ if $\gw$ is balanced, for $\overleftarrow{\gw}$ if $\gw$ is
 quasi-balanced.

\item [(ii)] In the new system thus obtained, we replace every
  series $\Kc_\gw$ such that $\gw$ is not balanced by~$0$, 
every
  series $\Lc_\gw$ such that $\gw$ is balanced by $\Kc_\gw+ \Lc^+_\gw$, and
  every series $\Lc_\gw$ such that $\gw$ is quasi-balanced by
  $\Kc_{\overleftarrow{\gw}}+ \Lc^+_\gw$.   In particular, the series
  $\Lc_\gu-K_\gu$ occurring in~\eqref{L-l-c2-var} becomes $\Lc_\gu^+$
  when $\gu$ is balanced, $\Lc_\gu$ otherwise. The
    only series $\Lc_\gw$ that remain in the system are such that the
    balance of $\gw$ is at least 2.
\end{enumerate}
We thus obtain a positive system, denoted $S_1$, defining the following series:
\begin{itemize}
\item $\Kc_\gw$, for
$\gw$ balanced of length between 2 and  $2k$, 
\item $\Lc^+_\gw$, for
$\gw$ balanced or quasi-balanced of length less than $2k$, 
\item $\Lc_\gw$, for $\gw$  valid  of length less than $2k$ and balance at least~2.
\end{itemize}
For instance, when $k=1$, the system~\eqref{syst-L-1} becomes (after
exploiting the 0/1 symmetry):
$$
\begin{cases}
\Kc_{01} \!\!\!\!& = \  t + t(K_{01}+\Lc_0^+),
\\
L^+_\varepsilon \!\!\!\!& = \   2t (1+L^+_\vareps )^2+ tL^+_\vareps + 2tL^+_0,
 \\
\Lc_0^+ \!\!\!\!& = \  2t  (1+L^+_\vareps ) (K_{01}+\Lc_0^+) + tL^+_\varepsilon + t\Lc_0^+ .
\end{cases}
$$
Similarly, when $k=2$, the system~\eqref{eq:syst_inf_2} is replaced by:
$$
\begin{cases}
\Kc_{10} =\Kc_{01}\!\!\!\! & = \  t + t(\Kc_{10}+\Lc^+_0), \\
\Kc_{1100} \!\!\!\! & = \  t\Kc_{10}  + t (\Kc_{1100}+\Lc^+_{100}), \\
\Kc_{1010} \!\!\!\! & = \  t( \Kc_{10} + \Kc_{01} ) + t (\Kc_{1010}+\Lc^+_{010}), \\
\Kc_{0110} \!\!\!\! & = \  t \Kc_{01} + t (\Kc_{0110}+\Lc^+_{110}) ,
\\
\Le^+ \!\!\!\! & = \   2t(1+\Le^+)^2+t\Le^++2t(\Lc_0^++\Lc_{100}^++\Lc_{010}^++\Lc_{110}^+),\\
\Lc^+_0 = \Lc^+_1\!\!\!\! & = \   2t(1+\Le^+)(K_{10}+\Lc_0^+)+t\Le^++t\Lc_0^++t(\Lc_{100}^++\Lc_{010}^++\Lc_{110}^+),
\\
\Lc_{00} = \Lc_{11}\!\!\!\!& = \  2t(1+\Le^+) \Lc_{00} + t(\Kc_{10} +\Lc^+_0 ) + t\Lc_{00} + t\Lc^+_{100} ,\\
\Lc^+_{10} = \Lc^+_{01}\!\!\!\!& = \  2t(1+\Le^+)(K_{10}+\Lc_{10}^+)+t(K_{01}+\Lc_1^+)+t\Lc_{10}^++t(\Lc_{110}^++\Lc_{010}^+),
\\
\Lc^+_{100} \!\!\!\! & = \  2t(1+\Le^+) (\Kc_{1100}+\Lc^+_{100}) + t\Lc^+_{10}  + t \Lc^+_{100}  ,\\
\Lc^+_{010} \!\!\!\! & = \  2t(1+\Le^+) (\Kc_{1010}+\Lc^+_{010}) + t(\Lc^+_{01}  + \Lc^+_\vareps \Kc_{10}) + t\Lc^+_{010}, \\
\Lc^+_{110} \!\!\!\! & = \  2t(1+\Le^+) (\Kc_{0110}+\Lc^+_{110}) + t(\Lc_{11}  + \Lc^+_\vareps \Kc_{10}) + t \Lc^+_{110}.
\end{cases}
$$

\medskip
The second condition that  we need is \emm properness, (again, in
the sense of~\cite[Sec.~VII.6.3]{flajolet-sedgewick}). But  the system
$S_1$ that we have just obtained is  proper, thanks to the factor $t$
occurring in the right-hand side of~\eqref{F-l-c},~\eqref{L-l-c2} and~\eqref{L-l-c2-var}.

\medskip
The next condition is aperiodicity. The coefficients of $t^1$ and
$t^2$ in the series $\Le^+(t)$ are both non-zero. This implies that
this series is aperiodic. Consequently, if we prove that the system
$S_1$ is
\emm irreducible, (which will be our final objective below), then it
will be aperiodic~\cite[p.~483]{flajolet-sedgewick}.

\medskip
So let us finally prove that $S_1$ is irreducible. Recall
that in such a polynomial system, a series $F$ \emm depends, on a series $G$ if $G$ occurs in the
right-hand side of the equation defining $F$. Irreducibility means
that the digraph of dependences is strongly connected. Recall that $S_1$ involves two families
of series: the series $\Kc_\gw$, for $\gw$  balanced of length between
2 and $2k$, and $\Lc_\gw$
(or $\Lc^+_\gw$) for any valid $\gw$ of length at most $2k-1$. To
lighten notation, for any  $\gw$ we denote by $\tLc_\gw$ the
corresponding $L$-series, be it  $\Lc_\gw$ or $\Lc^+_\gw$. 

Let us first prove that every series in $S_1$ depends on
$\tLc_\vareps=\Le^+$. By~\eqref{L-l-c2} and~\eqref{L-l-c2-var}, this holds for every $\tLc_\gw$. Now, each $\Kc_\gw$
 depends on at least one $\tLc$-series (see~\eqref{F-l-c}), and thus by
 transitivity on $\Le^+$.

Conversely, let us prove that $\Le^+$ depends on all other
series occurring in $S_1$.
\begin{itemize}
\item 
First,   Equation~\eqref{L-l-c2-var} applied to $\gw=\vareps$ shows that $\Le^+$ depends on all series $\tLc_\gu$ such that 
$\gu$ is quasi-balanced.
\item Let us now prove, by induction on the balance $b(\gu)$, that
  $\Le^+$ depends on $\tLc_\gu$ for each valid word~$\gu$ of length at
  most $2k-1$. We have already seen  this for
  $b(\gu)=1$. If $b(\gu)=0$, then $|\gu|\le 2k-2$, and for any letter $a$ the word
  $\gw:=\gu a$ is valid and quasi-balanced. The second term
  in~\eqref{L-l-c2-var} shows 
  that $\tLc_\gw$ depends on $\tLc_\gu$. By transitivity, this implies
  that $\Le^+$ depends on $\tLc_\gu$. We have thus set the initial
  cases of our induction, for balances~0 and 1. Now assume $b(\gu)\ge 2$. There exists a
  letter $a$ such that $\gw:=\gu a$ is valid and has balance
  $b(\gu)-1$.  If $b(\gw)=1$ (resp. $b(\gw)>1$), the second (resp. 
 third) term in the equation~\eqref{L-l-c2-var}
  (resp.~\eqref{L-l-c2}) defining $\tLc_\gw$ shows
  that $\tLc_\gw$ depends on $\tLc_\gu$. By the induction hypothesis,
  $\Le^+$ depends on $\tLc_\gw$, and thus by transitivity on $\tLc_\gu$.
\item Finally, let $\gu$ be balanced of length between 2 and
  $2k$. Then $\gw:=\gu_s$ is quasi-balanced.  The
  first term of the equation~\eqref{L-l-c2-var}
  defining $\tLc_\gw$ involves $\Lc_\gw= \Kc_\gu+ \tLc_\gw$, 
  so that by transitivity, $\Le^+$ depends on $\Kc_\gu$.
\end{itemize}

We have now checked all conditions of Theorem~VII.6
of~\cite[p.~489]{flajolet-sedgewick}. Applying it gives our proposition.
\end{proof}

\subsection{Back to examples}
\label{sec:ex-lc}
We now return to the cases $k=1$ and $k=2$ studied in
Section~\ref{sec:ex-c}. We refer
to~\cite[Sec.~VII.7]{flajolet-sedgewick} for generalities on the
singularities of algebraic series, and on the asymptotic behaviour of
their coefficients. When $k=1$, we have obtained for $\Le$ the
quadratic equation~\eqref{eq:eq_inf_1}. Its dominant coefficient 
only vanishes at $t=0$, and its discriminant is 
 $\Delta_1(t):=t^4+4t^3+22t^2-12t+1$. The radius $\rho_1$ must be one
 of the roots of $\Delta_1$.
The only real positive roots are around $0.1032$ and
$0.3998$. By solving~\eqref{eq:eq_inf_1}  explicitly, we see
that the smallest of these roots is indeed a singularity of
$\Le$. Hence $\rho_1=0.1032\ldots$ and the corresponding growth
rate is $\la_1=1/\rho_1= 9.684\ldots$, which improves on
the lower bound $8$ coming from Eulerian \emm maps,.

\medskip
When $k=2$, we have obtained for $\Le$ the
quartic equation~\eqref{eq:eq_inf_2}. Its dominant coefficient does
not vanish away from $0$, and its discriminant is 
\begin{multline*} \label{eq:disc_inf_2}
\Delta_2(t):=64t^{12}(t-1)(81t^{21} + 1863t^{20} + 11322t^{19} + 38592t^{18} +
101105t^{17} + 226631t^{16} + \\ 393423t^{15} + 532907t^{14} + 
665167t^{13} + 719797t^{12} + 454804t^{11} + 355710t^{10} + 
360159t^9 - 262135t^8 -  \\ 239969t^7 + 723151t^6 - 1106764t^5 + 
820832 t^4 - 316644t^3 + 65424t^2 - 6780t + 268).
\end{multline*}
The only  roots in $(0,1)$ are  $0.0984\ldots$ and $0.2714\ldots$. The
radius $\rho_2$ must be the first one (the other would give a growth
rate smaller than 8). Hence the corresponding growth
rate is $\la_2=1/\rho_2= 10.16\ldots$, which  improves on the previous
bound $\la_1$.

We do not push our study to larger values of $k$, as we will obtain
better bounds with the prime decomposition in the next section.

\section{Subsets of Eulerian orientations, via the  prime decomposition}
\label{sec:prime-lower}

In this section, we combine the restriction on allowed splits of the
previous section with the prime decomposition  of
Section~\ref{sec:prime-dec} to obtain a new family of subsets of
Eulerian orientations. The results and proofs are
similar to those of the previous section, and we give fewer details.
The new subsets $\pinf$ satisfy $\peom\subset \pinf$ (Proposition~\ref{prop:incl-l}), hence they give
better  lower bounds 
on the growth rate $\mu$  than those obtained in the previous
section.  Moreover these bounds  increase to  $\mu$ as $k$
increases (Proposition~\ref{prop:lc-incr}).

\medskip
Recall  from Section~\ref{sec:prime-dec} that an Eulerian orientation
is a sequence of {prime} Eulerian orientations, and that a prime (Eulerian) orientation 
 can be  obtained recursively from the atomic map by either:
\begin{itemize} 
\item adding a loop,  oriented in either way, around an orientation $O_1$,
\item or performing a legal split on a prime orientation $O' \in \PEO$, followed by
  the concatenation of an arbitrary Eulerian orientation $O''$ at the new
  vertex created by the split (Figure~\ref{fig:rec-prime}).
\end{itemize}

\begin{definition} \label{def:lb-prime}
Let $k\ge 1$. Let $\pinf$ be the set of planar orientations obtained
recursively from the atomic map by either:
\begin{itemize} 
\item  concatenating a sequence of prime orientations of $\pinf$,
\item or adding a loop,  oriented in either way, around an orientation
  $O_1$ of $\pinf$, 
\item or performing a legal $i$-split on a prime orientation $O' \in
  \pinf$, with $i=\dv(O')$ or $i\le k$, followed by
  the concatenation of an arbitrary  orientation of $\pinf$ at the new
  vertex created by the split.
\end{itemize}
\end{definition}
Clearly, the sets $\pinf$ increase to the set $\PEO$ of all Eulerian
orientations as $k$ increases, hence their growth rates $\bar \la_k$
form a non-decreasing sequence of lower bounds on $\mu$. But we have
in this case a stronger result.

\begin{prop}\label{prop:lc-incr}
  For $k\ge 1$, the sequence $(\bar{\ell}_n^{(k)})_{n\ge 0}$ that counts
  orientations of $\pinf$ by their size is
  super-multiplicative. Consequently, the associated growth rate
\beq\label{super-mult-bound}
\bar \la_k:= \lim_n \ (\bar\ell_n^{(k)})^{1/n}=\sup_n\   (\bar\ell_n^{(k)})^{1/n}
\eeq
increases to $\mu$ as $k$ tends to infinity.
\end{prop}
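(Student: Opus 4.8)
The plan is to prove the two assertions of Proposition~\ref{prop:lc-incr} separately: first the super-multiplicativity of $(\bar\ell_n^{(k)})_{n\ge0}$, from which~\eqref{super-mult-bound} follows by Fekete's Lemma exactly as for $\mu$ itself; and then the convergence $\bar\la_k\to\mu$.

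\smallskip
\textbf{Super-multiplicativity.} I would exhibit an injection $\pinf_m\times\pinf_n\hookrightarrow\pinf_{m+n}$, where $\pinf_n$ denotes the set of orientations of $\pinf$ with $n$ edges. The natural candidate is concatenation: given $O_1\in\pinf_m$ and $O_2\in\pinf_n$, write $O_1=P_1\cdots P_a$ and $O_2=Q_1\cdots Q_b$ as sequences of prime orientations (each $P_i,Q_j$ belonging to $\pinf$ by Definition~\ref{def:lb-prime}), and form $O_1\cdot O_2:=P_1\cdots P_a Q_1\cdots Q_b$. By the first clause of Definition~\ref{def:lb-prime} this concatenation lies in $\pinf$, it has $m+n$ edges, and the map $(O_1,O_2)\mapsto O_1\cdot O_2$ is injective because the prime decomposition of a planar map is unique and the boundary of $O_1$ along the root face records precisely where $O_2$ begins (concretely, the number of edges of $O_1$ is recovered as $m$). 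Hence $\bar\ell_{m+n}^{(k)}\ge\bar\ell_m^{(k)}\bar\ell_n^{(k)}$. Fekete's Lemma (as cited for~\eqref{fekete}) then gives both the existence of the limit and the fact that it equals the supremum, yielding~\eqref{super-mult-bound}. One should double-check that the concatenation of a sequence of $\pinf$-orientations is genuinely allowed with \emph{no} constraint on the $i$-splits between consecutive prime blocks — which is exactly what the first bullet of Definition~\ref{def:lb-prime} grants.

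\smallskip
\textbf{Convergence to $\mu$.} Since $\pinf\subseteq\PEO$ we have $\bar\la_k\le\mu$ for every $k$, and since the sequence $(\pinf)_k$ is increasing the $\bar\la_k$ are non-decreasing, so $L:=\lim_k\bar\la_k$ exists and $L\le\mu$. For the reverse inequality, fix $n$ and recall that every Eulerian orientation of size $n$ lies in $\peom$ for $k\ge n$ (indeed for $k\ge n-2$), and $\peom\subseteq\pinf$ by Proposition~\ref{prop:incl-l}; hence for $k\ge n$ we get $\bar\ell_n^{(k)}=o_n$, so $(\bar\ell_n^{(k)})^{1/n}=o_n^{1/n}$. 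Combining with the supremum formula~\eqref{super-mult-bound}, for every fixed $n$ and every $k\ge n$ we obtain $\bar\la_k\ge(\bar\ell_n^{(k)})^{1/n}=o_n^{1/n}$, whence $L\ge o_n^{1/n}$. Letting $n\to\infty$ and using~\eqref{fekete}, $L\ge\sup_n o_n^{1/n}=\mu$. Therefore $L=\mu$, and since $(\bar\la_k)$ is monotone it increases to $\mu$.

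\smallskip
\textbf{Main obstacle.} The arithmetic is routine; the only genuine point requiring care is verifying that concatenation really is an injection into $\pinf$ — i.e.\ that no legality or smallness condition on splits is violated at the junctions between prime blocks, and that the $i\le k$ (or $i=\dv$) restriction, which is imposed only \emph{within} the recursive construction of each prime block, does not interact with the way blocks are glued. This is where the prime decomposition of Section~\ref{sec:prime-dec} does the work: a sequence of prime orientations is itself an element of $\pinf$ \emph{by construction}, with the inter-block structure unconstrained, so the injection is immediate once one has checked that the size and the root-face boundary data jointly recover $(O_1,O_2)$. I would also note in passing that this super-multiplicativity is strictly stronger than the mere set-inclusion monotonicity available for the $\peom$ of the previous section, which is the reason the statement is phrased for $\pinf$ rather than $\peom$.
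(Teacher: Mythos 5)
Your proof is correct and follows essentially the same route as the paper: concatenation at the root vertex gives super-multiplicativity, hence~\eqref{super-mult-bound} by Fekete's Lemma, and the convergence follows from $\bar\ell_n^{(k)}=o_n$ for $k\ge n$ together with the supremum formula. The only (immaterial) difference is that the paper deduces $\bar\ell_n^{(k)}=o_n$ directly from the fact that the sets $\pinf$ increase to $\PEO$, rather than routing through Proposition~\ref{prop:incl-l} and the inclusion $\PEO_n\subseteq\mathcal L^{(n)}$ as you do.
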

\begin{proof}
  By definition of $\pinf$, concatenating two orientations of $\pinf$
  at their root vertex gives a new element of $\pinf$, which implies
  super-multiplicativity and the identity~\eqref{super-mult-bound}
  (by Fekete's Lemma~\cite[p.~103]{vanlint01}).

Now since $\pinf$ converges to $\PEO$, for any $n$, there exists $k$
such that
$o_n= \ell_n^{(k)}$ (one can take $k=n$, or
even $k=n-2$). Hence
$$
o_n^{1/n} = (\bar\ell_n^{(k)})^{1/n} \le \bar\la_k \le \lim_k \bar\la_k,
$$
and it follows now from~\eqref{fekete} that $\mu\le \lim \bar \la _k$. Since
$\bar \la_k\le \mu$, the proposition follows.
\end{proof}

\begin{prop}\label{prop:incl-l}
For $k\ge 1$, the subset of orientations $\pinf$ includes the subset
$\peom$ defined in Section~\rm{\ref{sec:classic-lower}}. 
\end{prop}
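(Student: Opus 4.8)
The plan is to prove the inclusion $\peom \subseteq \pinf$ by showing that every construction step allowed in the recursive definition of $\peom$ (Definition~\ref{def:m}) can be simulated by a sequence of steps allowed in the definition of $\pinf$ (Definition~\ref{def:lb-prime}). I would argue by induction on the number of edges of an orientation $O\in\peom$. The base case is the atomic map, which belongs to $\pinf$ by definition.

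For the inductive step, suppose $O\in\peom$ is non-atomic. By Definition~\ref{def:m}, $O$ is obtained either by merging two smaller orientations $O_1,O_2\in\peom$ (with the root loop oriented either way), or by a legal $i$-split on some $O'\in\peom$ with $i\le k$ or $i=\dv(O')$. In both cases the smaller orientations lie in $\peom$ and, by the induction hypothesis, in $\pinf$. The merge case is immediate: an orientation obtained by merging $O_1$ and $O_2$ is, by the prime decomposition, a prime orientation (the outer loop around $O_1$ gives a prime submap) concatenated with the sequence of prime submaps of $O_2$; adding the loop around $O_1$ is exactly the second bullet of Definition~\ref{def:lb-prime}, and the concatenation is the first bullet. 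So $O\in\pinf$.

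The split case is the one that needs the small amount of real work. Given $O'=P_1\cdots P_\ell$ with prime submaps $P_j$, a legal $i$-split at the root vertex of $O'$ acts, in the language of Section~\ref{sec:prime-dec}, on the \emph{last} prime submap $P_\ell$: it splits the root vertex of $P_\ell$ into $v,v'$, and the prime submaps $P_1,\ldots,P_{\ell-1}$ get reattached at the new vertex $v'$. Thus $O$ is obtained from the prime orientation $P_\ell\in\pinf$ (which is in $\pinf$ by the induction hypothesis, since any prime submap of an orientation of $\peom$ is again an orientation of $\peom$ — one should note this explicitly) by a legal $i$-split, followed by concatenating the arbitrary orientation $P_1\cdots P_{\ell-1}\in\pinf$ at $v'$, then possibly this whole prime block is itself preceded by nothing (it is the last prime submap of $O$). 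The condition $i\le k$ or $i=\dv(O')$ must be matched against the condition $i=\dv(P_\ell)$ or $i\le k$ in Definition~\ref{def:lb-prime}. Here one must check that the degree of the root vertex of $O'$ equals the degree of the root vertex of $P_\ell$ — which is exactly the content of the prime decomposition, since the root vertex of $O'$ is the root vertex of $P_\ell$ and the reattached submaps $P_1,\ldots,P_{\ell-1}$ sit at $v'$, not at $v$. Hence $\dv(O')=\dv(P_\ell)$, so $i\le k$ or $i=\dv(O')$ translates precisely to $i\le k$ or $i=\dv(P_\ell)$, and the split is of an allowed type for $\pinf$.

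The main (mild) obstacle is bookkeeping: making sure that the split performed on $O'$ in the flat decomposition really corresponds to a split performed on its last prime submap in the prime decomposition, and that the associated root-degree constraints line up — this is essentially a re-reading of Figures~\ref{fig:dec_eul}, \ref{fig:prime_decomp} and~\ref{fig:rec-prime}. I would also state the auxiliary observation that $\peom$ is closed under taking prime submaps (immediate from Definition~\ref{def:m}, since each prime submap is built using only merges and small/maximal splits that are still small/maximal relative to that submap), since it is used to feed prime submaps back into the induction hypothesis. Once these points are settled, the inclusion follows.
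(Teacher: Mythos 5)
Your overall strategy (induction on the edge number, simulating each construction step of $\peom$ by steps of $\pinf$) is the same as the paper's, and your treatment of the merge case is correct. But the split case contains a genuine error. You assert that an $i$-split performed on $O'=P_1\cdots P_\ell$ always ``acts on the last prime submap $P_\ell$'', and that $\dv(O')=\dv(P_\ell)$. Both claims are false as soon as $\ell\ge 2$. In the prime decomposition all the submaps $P_1,\ldots,P_\ell$ share the root vertex of $O'$, so $\dv(O')=\sum_j \dv(P_j)>\dv(P_\ell)$; this is visible in the equation $M(t;x)=1+M(t;x)M'(t;x)$, where \emph{both} factors carry the variable $x$ recording the root degree. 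An $i$-split keeps the last $2i-1$ edges of $O'$ (in counterclockwise order from the root corner) at the root vertex; since $P_\ell$ contributes the \emph{suffix} of the root word, these edges lie inside $P_\ell$ only when $i\le\dv(P_\ell)$. For larger $i$ --- in particular for the maximal split $i=\dv(O')$, which Definition~\ref{def:m} explicitly allows and which keeps all but one of the old edges at the root --- the split spans several prime submaps, and the orientation you build by splitting $P_\ell$ and hanging $P_1\cdots P_{\ell-1}$ at $v'$ is not $O$; indeed for $i>\dv(P_\ell)$ there is no $i$-split of $P_\ell$ at all. So your translation of the condition ``$i\le k$ or $i=\dv(O')$'' into ``$i\le k$ or $i=\dv(P_\ell)$'' does not hold, and the maximal-split case of the induction collapses.

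The correct bookkeeping, which is what the paper does, is the following. Let $P_m$ be the prime submap in which the last $2i-1$ edges of $O'$ end, so that $P_{m+1},\ldots,P_\ell$ stay at the root vertex of $O$ while $P_1,\ldots,P_{m-1}$ (and the initial part of $P_m$) move to $v'$. The split then induces a legal $j$-split of the \emph{prime} orientation $P_m$ with $j=i-\dv(P_{m+1})-\cdots-\dv(P_\ell)\le i$; hence $j\le k$ when $i\le k$, and when $i=\dv(O')$ is maximal one gets $m=1$ and $j=\dv(P_1)$, i.e.\ a maximal split of $P_1$. One applies the third construction of Definition~\ref{def:lb-prime} to $P_m$, concatenating $P_1\cdots P_{m-1}$ at $v'$, and then concatenates $P_{m+1},\ldots,P_\ell$ back at the root via the first construction to recover $O$. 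Your auxiliary observation that $\peom$ is closed under taking prime submaps is indeed needed to feed the $P_j$ into the induction hypothesis (the paper uses it rather implicitly), but it does not repair the degree identification above.
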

\begin{proof}
  We prove this by induction on the number of edges. The inclusion is
  obvious for orientations with no edge. Now let $O\in \peom$, having
  at least one edge. 

If $O$ is the merge of two orientations $O_1$ and
  $O_2$, then the induction hypothesis implies that $O_1$ and $O_2$
  are in $\pinf$. The structure of $\pinf$ implies that every prime
  sub-orientation of $O_2$ (attached at the root of $O_2$) also belongs to
  $\pinf$. Then $O$ can be obtained as an orientation of $\pinf$ by
  first adding a loop around $O_1$ (this is the second construction in
  Definition~\ref{def:lb-prime}), then concatenating one by one the prime sub-orientations
  of $O_2$ (first construction in Definition~\ref{def:lb-prime}).

Otherwise, $O$ is obtained by a legal split in an orientation $O'$
formed of the prime sub-orientations $P_1, \ldots, P_\ell$. By the induction
hypothesis, $O'$, and its prime sub-orientations $P_1, \ldots, P_\ell$, belong
to $\pinf$. Let us say that the
split occurs in $P_i$ (this means that the sub-orientations $P_1, \ldots,
P_{i-1}$ are attached to the new created vertex $v'$, while $P_{i+1},
\ldots, P_\ell$ remain attached to the original vertex $v$, the root vertex of $O$). Then the
orientation $O_1$ obtained by deleting from $O$ the sub-orientations  $P_{i+1},
\ldots, P_\ell$ can be obtained by a legal split in the prime
orientation $P_i$, followed by the concatenation of  $P_1, \ldots,
P_{i-1}$ at the new created vertex. This is the third construction in
Definition~\ref{def:lb-prime}, hence $O_1$ belongs to $\pinf$. It
remains to concatenate  $P_{i+1},
\ldots, P_\ell$ at the root (first construction in
Definition~\ref{def:lb-prime}), and we recover $O$ as an element of~$\pinf$.
\end{proof}

\subsection{An algebraic system for $\boldsymbol\pinf$}
We now fix $k\ge 1$. For $\gw$ a word on $\{0,1\}$, let
$\Lp_\gw^{(k)}(t) \equiv \Lp_\gw$ denote the \gf\ of
orientations of $\pinf$ whose root word ends with $\gw$. Let $\Kp_\gw$
be the \gf\ of those that have root word exactly $\gw$. Let
$\Lpp_\gw$ and $\Kpp_\gw$ be the corresponding series for
\emm prime, orientations of $\pinf$. We are especially interested in
the series $\Lp_\vareps$ that counts all orientations of $\pinf$.

 If $\gw$ is not balanced, $\Kp_\gw=\Kpp_\gw=0$, while if
$\gw=\vareps$, $\Kp_\gw=1$ and $\Kpp_\gw=0$.  For $\gw$ non-empty
and balanced, of length at most $2k$, we have
\beq\label{F-l-p}
\Kp_\gw= \sum\limits_{\gw= \gu \gv} \Kp_{\gu} \Kpp_{\gv},
\eeq
since an orientation of $\pinf$ is a sequence of orientations of
$\pinf$. Now the description of prime orientations of $\pinf$
(Definition~\ref{def:lb-prime}) gives
\beq\label{Fp-l-p}
\Kpp_\gw=t \Kp_{\gw_c} + t \Lpe \Lpp_{\gw_s}.
\eeq
The first term corresponds to adding a loop, and the second to
a legal $i$-split, where $i\le k$ is the half-length of $\gw$. The
factor $\Lp_\vareps$ accounts for the orientation $O''$ attached at the end
of the root edge.

Now let $\gw$ be a valid word of length at most $2k-1$, and let us
write equations for the series $\Lp_\gw$ and $\Lpp_\gw$. For $\Lp_\gw$,
the sequential structure of orientations of $\pinf$ gives
\beq\label{L-l-p}
\Lp_\gw  = \mathbbm{1}_{\gw=\vareps}+ \Lpe \Lpp_\gw + \sum\limits_{\gw= \gu \gv, \gv \neq  \gw} \Lp_{\gu} \Kpp_{\gv}.
\eeq
The second (resp.~third) term counts orientations in which the root word
  of the last prime component
 ends with $\gw$ (resp.~is shorter than $\gw$). Finally, for the series
$\Lpp_\gw$ we obtain the following  counterpart
of~\eqref{L-l-c2}:
\begin{multline}\label{Lpp-l-p}
\Lpp_\gw  = 2t \Lp_\vareps \mathbbm{1}_{\gw=\vareps}+ t
\Lp_{\gw_p}\mathbbm{1}_{\gw\not = \vareps }
+t \Kp_{\gw_c}\mathbbm{1}_{\gw\not = \vareps   \mbox{ balanced}}
+ t \Lp_\vareps\left(
 \Lpp_\gw 
+	\sum\limits_
{\substack{\gu = \gv \gw \\ 0<|\gu| \le 2k \\ \gu \text{ balanced}}}
 (\Lpp_{\gu_s} - \Kpp_{{\gu}})\right) .
\end{multline}
The first three terms count
orientations in which the root edge is a loop, and the last one
those obtained by a split.

\begin{prop}\label{def-syst-l-p}
  Consider the collection of equations consisting of:
  \begin{itemize}
  \item Equation~\eqref{F-l-p}, written for all balanced words $\gw$
    of length between $2$ and $2k-2$,
\item Equation~\eqref{Fp-l-p}, written for all balanced words $\gw$
    of length between $2$ and $2k$,
\item Equation~\eqref{L-l-p}, written for all valid words $\gw$
	of length at most $2k-2$,
 \item Equation~\eqref{Lpp-l-p}, written for all valid words $\gw$
	of length at most $2k-1$.
  \end{itemize}
In this collection, replace all trivial $\Kp$- and $\Kpp$-series by their value:
$\Kp_\gw=\Kpp_\gw=0$ when $\gw$ is not balanced, $\Kp_\vareps=1$,
$\Kpp_\vareps=0$. Let $\sf S_0$
denote the resulting system. The number of
series it involves is $2\e(k)-2{2k \choose k}$, where $\e(k)$ is given
by~\eqref{ek-def}. Moreover, $\sf S_0$ defines uniquely all these 
series. Its size  can be (roughly) divided by two upon
exploiting the $0/1$ symmetry.
\end{prop}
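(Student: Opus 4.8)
The plan is to follow the template of the proof of Proposition~\ref{prop:l-c}: first I would check that $\sf S_0$ is a \emph{closed} system — every series occurring in a right-hand side (once the trivial $\Kp$- and $\Kpp$-series have been replaced by their values) also occurs as a left-hand side — then that it determines all its series uniquely as formal power series in $t$, and finally count the series. For closure, I would invoke three elementary facts about words over $\{0,1\}$: every factor (in particular every prefix, suffix or central factor) of a valid word is valid; every factor of a balanced word of length at most $2k$ is valid of length at most $2k-1$; and the central factor $\gw_c$ of a balanced word is either balanced or of balance $2$, so $\Kp_{\gw_c}$ is either one of the series of $\sf S_0$ or has been replaced by $0$. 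Running through the four equation types \eqref{F-l-p}--\eqref{Lpp-l-p}, one then checks that every non-trivial series in each right-hand side is indexed by a balanced (resp.\ valid) word of the prescribed length, hence appears as a left-hand side; for instance in \eqref{Lpp-l-p} the auxiliary word $\gu=\gv\gw$ is balanced of length at most $2k$, so $\gu_s$ is valid of length at most $2k-1$ and $\Kpp_\gu$ is one of the series of $\sf S_0$, and in \eqref{Fp-l-p} the suffix $\gw_s$ is valid of length at most $2k-1$.

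For uniqueness, the one point needing slightly more care than in Proposition~\ref{prop:l-c} is that \eqref{F-l-p} and \eqref{L-l-p} do \emph{not} carry a factor $t$. However \eqref{Fp-l-p} and \eqref{Lpp-l-p} do, so $\Kpp_\gw=O(t)$ and $\Lpp_\gw=O(t)$ for every $\gw$. After dropping the vanishing term $\Kp_\gw\Kpp_\vareps$, Equation~\eqref{F-l-p} expresses $\Kp_\gw$ in terms of series $\Kp_\gu$ with $|\gu|<|\gw|$ and series $\Kpp_\gv=O(t)$; similarly \eqref{L-l-p} expresses $\Lp_\gw$ in terms of series $\Lp_\gu$ with $|\gu|<|\gw|$ together with $O(t)$-series (here $\Lp_\vareps=1+O(t)$, but it multiplies $\Lpp_\gw=O(t)$). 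Hence the coefficient of $t^n$ in every series is determined by induction on $n$: assuming all series are known modulo $t^n$, one first reads off $[t^n]\Kpp_\gw$ and $[t^n]\Lpp_\gw$ from \eqref{Fp-l-p}--\eqref{Lpp-l-p}, then $[t^n]\Kp_\gw$ from \eqref{F-l-p} by a secondary induction on $|\gw|$, and then $[t^n]\Lp_\gw$ from \eqref{L-l-p} by a secondary induction on $|\gw|$. Thus $\sf S_0$ has a unique power-series solution, necessarily the vector of generating functions defined combinatorially.

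For the count, note that $\Kpp_\gw$ ($\gw$ balanced, $2\le|\gw|\le 2k$) and $\Lpp_\gw$ ($\gw$ valid, $|\gw|\le 2k-1$) are indexed by exactly the same sets of words as the $\Kc$- and $\Lc$-series of Proposition~\ref{prop:l-c} (whose number is $\e(k)$), so $|\Kpp|+|\Lpp|=\e(k)$. Moreover $\Kp_\gw$ ranges over balanced words of length $2\le|\gw|\le 2k-2$, i.e.\ over the index set of $\Kpp$ minus the $\binom{2k}{k}$ balanced words of length $2k$, while $\Lp_\gw$ ranges over valid words of length $|\gw|\le 2k-2$, i.e.\ over the index set of $\Lpp$ minus the valid words of length $2k-1$; the latter are exactly the quasi-balanced words of that length, of which there are $\binom{2k-1}{k-1}+\binom{2k-1}{k}=\binom{2k}{k}$. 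Hence $|\Kp|+|\Lp|=\e(k)-2\binom{2k}{k}$, and the total is $2\e(k)-2\binom{2k}{k}$. Finally, reversing every edge is a size-preserving involution on $\pinf$ that sends an orientation with root word $\gw$ to one with root word $\bar\gw$ (the complement of $\gw$), so $\Kp_{\bar\gw}=\Kp_\gw$ and likewise for $\Kpp$, $\Lp$, $\Lpp$; since $\bar\gw\ne\gw$ for every non-empty word, the non-empty indices split into pairs and the system one actually has to solve is (roughly) halved. The main obstacle in all of this is really the bookkeeping: matching the index ranges across the four equations, and getting the uniqueness argument right in the absence of a uniform factor $t$.
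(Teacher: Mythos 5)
Your proposal is correct and follows essentially the same route as the paper's proof: check that the system is closed, observe that \eqref{Fp-l-p} and \eqref{Lpp-l-p} carry a factor $t$ while every non-trivial term of \eqref{F-l-p} and \eqref{L-l-p} contains a primed (hence $O(t)$) series as a factor, and count the series by pairing them with those of Proposition~\ref{prop:l-c}, the missing copies being exactly the $\binom{2k}{k}$ balanced words of length $2k$ and the $\binom{2k}{k}$ quasi-balanced words of length $2k-1$. Your phrasing of the count (summing the two halves directly) and your explicit two-level induction for uniqueness are just more detailed versions of the paper's argument.
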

\begin{proof}
  To prove that all series are well defined by the system, we first
  check that every series occurring in the right-hand side of some
  equation is the left-hand side of another equation. Then we note
  that:
  \begin{itemize}
  \item the   equations for prime orientations, namely~\eqref{Fp-l-p}
  and~\eqref{Lpp-l-p}, have a factor $t$ in their right-hand sides,
\item for the other two
  equations,~\eqref{F-l-p} and~\eqref{L-l-p}, every non-trivial term
  in the right-hand side has a series  of prime
  orientations as a factor.  
\end{itemize}
Now the number of equations: every series that was occurring in the
system $S_0$ of Proposition~\ref{prop:l-c} now has two copies (one with a
prime, one without), except for the series $\Kc_\gw$, for $\gw$
balanced of length $2k$, and $\Lc_\gw$, for $\gw$ quasi-balanced of
length $2k-1$, which have only one copy. Since there are ${2k \choose
  k}$ balanced words of length $2k$, and $2{2k-1 \choose
  k}= {2k \choose
  k}$ quasi-balanced words of length $2k-1$, the result follows.
\end{proof}

\begin{remark}
 As in Remark~\ref{forward}, if $\gw$ is such that $0\gw$ and $1\gw$ are
both valid of length less than $2k-2$ (resp. $2k-1$), we can
replace~\eqref{L-l-p} (resp.~\eqref{Lpp-l-p}) by the simpler forward equation:
$$
\Lp_\gw= \Kp_\gw + \Lp_{0\gw}+ \Lp_{1\gw} \qquad 
(\hbox{resp. }  \Lpp_\gw= \Kpp_\gw + \Lpp_{0\gw}+ \Lpp_{1\gw} ).
$$
This does not increase the size of the system.
\end{remark}

\subsection{Examples}\label{sec:ex-p}

\subsubsection{When  $\boldsymbol{k=1}$,}
the system $\sf S_0$ 
contains $2(\e(1)-2)=6$ equations, or 4 of we exploit the $0/1$ symmetry:
$$
\begin{cases}
\Kpp_{10} \!\!\!\!& = \  t+t\Lp_\varepsilon \Lpp_0,\\
\Lp_\vareps\!\!\!\!& = \ 1+ \Lp_\vareps\Lpp_\vareps, \\
\Lpp_\varepsilon \!\!\!\!& = \   2t \Lp_\vareps+  t \Lp_\vareps(\Lpp_\vareps+2\Lpp_0 - 2\Kpp_{10}),
\\
\Lpp_0 \!\!\!\!& = \  t \Lp_\varepsilon + t\Lp_\varepsilon (\Lpp_0+\Lpp_0 - \Kpp_{10}).
\end{cases}
$$
Eliminating all series but $\Lp_\vareps$  gives a cubic
equation for the \gf\ $\Lp_\vareps\equiv\Lp_\vareps^{(1)}$ of Eulerian
orientations in $\mathbb L^{(1)}$:
\begin{equation} \label{eq:eq_inf_1-p}
t^2\Lp_\vareps^3 +t(t-4)\Lp_\varepsilon^2+(2t+1)\Lp_\varepsilon -1=0.
\end{equation}

\subsubsection{When  $\boldsymbol{k=2}$,}
the system $\sf S_0$ 
contains $2(\e(2)-6)=30$ equations, or 16 if we exploit the 0/1 symmetry:
\beq
\begin{cases}\label{eq:syst_inf_2-p}
\Kp_{01}=\Kp_{10}\!\!\!\!& = \  \Kpp_{01},
\\
\Kpp_{10} =\Kpp_{01}\!\!\!\! & = \  t + t\Lp_\vareps\Lpp_1, 
\\
\Kpp_{1100} \!\!\!\! & = \  t\Kp_{10}  + t\Lp_\vareps \Lpp_{100}, \\
\Kpp_{1010} \!\!\!\! & = \  t \Kp_{01}  + t\Lp_\vareps \Lpp_{010}, \\
\Kpp_{0110} \!\!\!\! & = \   t\Lp_\vareps \Lpp_{110} ,
\\
\Lp_\vareps \!\!\!\! & = \ 1+\Lp_\vareps\Lpp_\vareps,
\\
\Lp_0=\Lp_1\!\!\!\!& = \  \Lp_\vareps \Lpp_{0},
\\
\Lp_{00} = \Lp_{11}\!\!\!\!& = \  \Lp_\vareps\Lpp_{00},
\\
\Lp_{01} =\Lp_{10}\!\!\!\!& = \  \Lp_\vareps\Lpp_{01},
\\
\Lpp_\vareps \!\!\!\! & = \  2t\Lp_\vareps  + t\Lp_\vareps(\Lpp_\vareps+ 2(\Lpp_0-\Kpp_{10}
+\Lpp_{100}-\Kpp_{1100}+ \Lpp_{010}-\Kpp_{1010} + \Lpp_{110}-\Kpp_{0110})),
\\
\Lpp_0 = \Lpp_1\!\!\!\! & = \   t \Lp_\vareps + t\Lp_\vareps(\Lpp_0 + \Lpp_0 -\Kpp_{10}
+\Lpp_{100}-\Kpp_{1100}+ \Lpp_{010}-\Kpp_{1010} + \Lpp_{110}-\Kpp_{0110}),
\\
\Lpp_{00}\!\!\!\!& = \  t\Lp_0+t\Lp_\vareps( \Lpp_{00}+ \Lpp_{100}-\Kpp_{1100}),
\\
\Lpp_{10} = \Lpp_{01}\!\!\!\!& = \  t \Lp_1+t+ t \Lp_\vareps(\Lpp_{10} +\Lpp_{1}
-\Kpp_{01} 
+ \Lpp_{010}-\Kpp_{1010} + \Lpp_{110}-\Kpp_  {0110}),
\\
\Lpp_{100} \!\!\!\! & = \  t \Lp_{10}  + t\Lp_\vareps(\Lpp_{100}+\Lpp_{100}-\Kpp_{1100}) ,\\
\Lpp_{010} \!\!\!\! & = \  t\Lp_{01} + t\Lp_\vareps(\Lpp_{010}+\Lpp_{010}-\Kpp_{1010}) , \\
\Lpp_{110} \!\!\!\! & = \  t \Lp_{11}+ t\Lp_\vareps(\Lpp_{110}+\Lpp_{110}-\Kpp_{0110}).\\
\end{cases}
\eeq
Eliminating all series but $\Lp_\vareps$ gives an
equation of degree 6 for the \gf\ $\Lp_\vareps\equiv\Lp_\vareps^{(2)}$ of Eulerian
orientations in $\mathbb L^{(2)}$:
 \begin{multline}\label{eq:eq_inf_2-p}
2t^5\Lp_\vareps ^6-t^4(t+8)\Lp_\vareps ^5-t^3(3t^2-16)\Lp_\vareps ^4+t^2(2t+3)(2t-5)\Lp_\vareps ^3-t(2t^2-7t-7)\Lp_\vareps ^2-(5t+1)\Lp_\vareps +1=0.
 \end{multline}

\subsection{Asymptotic analysis for subsets of Eulerian orientations   (prime decomposition)}
\label{sec:asympt-p}
We now prove for the polynomial system of
Proposition~\ref{def-syst-l-p} an analogue of
Proposition~\ref{lem:prop_systm_prev}.  

\begin{prop}
For $k\ge 1$, let $\bar\rho_k$ denote the radius of convergence of the
series $\Lp_\vareps^{(k)}$ that counts
orientations of $\pinf$. Then $\bar\rho_k$ is the only singularity of
$\Lp_\vareps^{(k)}$ of minimal modulus, and it is of the  square root
type. Consequently, there exists a constant $c$ such
that the number  $\bar\ell^{(k)}_n$  of orientations of size
$n$ in $\pinf$ satisfies, as $n$ tends to infinity:
$$
\bar\ell^{(k)}_n \sim  c \bar \lambda_k^{n} n^{-3/2},
$$
with $ \bar \lambda_k=1/\bar\rho_k$.
\end{prop}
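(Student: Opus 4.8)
The plan is to mimic, essentially verbatim, the proof of Proposition~\ref{lem:prop_systm_prev}, applying the theory of positive irreducible polynomial systems from~\cite[Sec.~VII.6]{flajolet-sedgewick} to the system $\sf S_0$ of Proposition~\ref{def-syst-l-p}. The four conditions to verify are: (1) that the system can be put in a \emph{positive} form; (2) that it is \emph{proper}; (3) that it is \emph{aperiodic}; and (4) that it is \emph{irreducible}. As in the earlier proof, once these hold, Theorem~VII.6 of~\cite[p.~489]{flajolet-sedgewick} yields the square-root singularity of $\Lp_\vareps^{(k)}$ at its radius of convergence $\bar\rho_k$, its uniqueness among singularities of minimal modulus, and hence the asymptotics $\bar\ell^{(k)}_n \sim c\, \bar\lambda_k^n n^{-3/2}$ by singularity analysis~\cite[Sec.~VII.7]{flajolet-sedgewick}.

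First I would address positivity. The obstructions again come from the subtractions in the equation~\eqref{Lpp-l-p} defining $\Lpp_\gw$: the terms $t\Lp_\vareps(\Lpp_\gw-\dots)$ and $\Lpp_{\gu_s}-\Kpp_{\gu}$. Exactly as before, I would introduce the series $\Lpp{}^+_\gw:=\Lpp_\gw-\Kpp_{\overleftarrow\gw}$ for $\gw$ quasi-balanced (and $\Lpp{}^+_\gw:=\Lpp_\gw-\Kpp_\gw$ for $\gw$ balanced), with the combinatorial meaning of prime orientations whose root word ends \emph{strictly} with $\gw$ (resp.~$\overleftarrow\gw$). Subtracting~\eqref{Fp-l-p} from~\eqref{Lpp-l-p} gives a positive equation for $\Lpp{}^+_\gw$. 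One then substitutes $\Kpp_\gw=0$ for non-balanced $\gw$, $\Lpp_\gw=\Kpp_\gw+\Lpp{}^+_\gw$ or $\Lpp_\gw=\Kpp_{\overleftarrow\gw}+\Lpp{}^+_\gw$ throughout, and similarly uses $\Lp_\gw=\Kp_\gw+(\Lp_\gw-\Kp_\gw)$ where needed; note that the sequential equations~\eqref{F-l-p} and~\eqref{L-l-p} are already manifestly positive. This produces a positive system $\sf S_1$. Properness is immediate: every prime equation carries a factor $t$, and in the two sequential equations every nontrivial monomial contains a prime-series factor, which itself carries a factor $t$ (a standard argument shows the substitution map is a contraction on the relevant metric space of formal power series). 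Aperiodicity follows because the coefficients of $t^1$ and $t^2$ in $\Lp_\vareps^{(k)}$ are nonzero (one checks $o_1=2$, $o_2=10$ are already realised in $\pinf$ for all $k\ge 1$), so $\Lp_\vareps$ is aperiodic, and aperiodicity of one series in an irreducible system propagates to the whole system~\cite[p.~483]{flajolet-sedgewick}.

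The main work, and the step I expect to be the genuine obstacle, is irreducibility: showing the dependency digraph of $\sf S_1$ is strongly connected. I would split this, as in the proof of Proposition~\ref{lem:prop_systm_prev}, into showing every series depends (transitively) on $\Lp_\vareps$ and, conversely, that $\Lp_\vareps$ depends on every series. For the first direction: each $\Lpp{}^+_\gw$ and each $\Lpp_\gw$ depends on $\Lp_\vareps$ directly through the factor $t\Lp_\vareps$ in~\eqref{Fp-l-p}/\eqref{Lpp-l-p}; then every $\Kpp_\gw$ depends on some $\Lpp$-series hence on $\Lp_\vareps$; every $\Lp_\gw$ depends on some $\Lpp$-series via~\eqref{L-l-p}; and every $\Kp_\gw$ depends on some $\Kpp$-series via~\eqref{F-l-p}. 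For the converse direction one argues as before by induction on the balance $b(\gw)$: Equation~\eqref{Lpp-l-p} applied to $\gw=\vareps$ shows $\Lp_\vareps$ (via $\Lpp_\vareps^+$, on which $\Lp_\vareps=1+\Lp_\vareps\Lpp_\vareps$ makes it depend) depends on all $\Lpp{}^+_\gu$ with $\gu$ quasi-balanced; the last sum in~\eqref{Lpp-l-p}, together with the "append a letter" trick used for balances $0$ and $\ge 2$, propagates dependence to all valid $\gw$; the term $t\Lp_{\gw_p}$ (resp.~$t\Kp_{\gw_c}$) and the sequential equations~\eqref{L-l-p},~\eqref{F-l-p} then pull in all remaining $\Lp_\gw$ and $\Kp_\gw$; and finally the $\Lpp_{\gu_s}$ appearing in~\eqref{Lpp-l-p} (which equals $\Kpp_\gu+\Lpp{}^+_{\gu_s}$) yields dependence on every $\Kpp_\gu$ for $\gu$ balanced of length between $2$ and $2k$. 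The delicate point is keeping careful track, in the presence of the prime/non-prime duplication and the two flavours ($\Lpp$ vs.~$\Lpp{}^+$) of the same word, of which monomials survive the positivisation substitution and still witness the needed dependencies; I would use the shorthand $\tLpp_\gw$ for "whichever of $\Lpp_\gw$, $\Lpp{}^+_\gw$ occurs" exactly as $\tLc_\gw$ was used earlier. Having checked all four hypotheses, Theorem~VII.6 of~\cite{flajolet-sedgewick} applies and gives the proposition.
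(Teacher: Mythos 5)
Your proposal follows exactly the paper's route: positivise the system of Proposition~\ref{def-syst-l-p} by introducing the $+$-series, then check properness, aperiodicity and irreducibility, and invoke Theorem~VII.6 of Flajolet--Sedgewick. The structure is right, and you correctly identify irreducibility as the only real work. Two concrete points in your sketch would need repair, and both live precisely in the bookkeeping you flag as delicate. First, your witness for the dependence of $\Lp_\vareps$ on the series $\Kpp_\gu$ ($\gu$ balanced) does not survive positivisation: the combination $\Lpp_{\gu_s}-\Kpp_{\gu}$ in~\eqref{Lpp-l-p} collapses to the single unknown $\Lppp{\gu_s}$, so the monomial $\Kpp_\gu$ disappears from that equation. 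The surviving witness is elsewhere, namely the term $\Lp_\vareps^+\,\Lpp_\gw$ of the positivised sequential equation~\eqref{Lp-plus}, which after substitution reads $\Lp_\vareps^+(\Kpp_\gw+\Lppp{\gw})$ and gives the dependence on $\Kpp_\gw$ for $|\gw|\le 2k-2$. Second, and as a consequence of the same cancellation, the series $\Kpp_\gw$ with $|\gw|=2k$ occur in \emph{no} right-hand side of the positivised system, so $\Lp_\vareps$ cannot depend on them and the system as you describe it is not irreducible; the paper deletes these equations (they are self-contained and not needed for $\Lp_\vareps^+$) before checking strong connectivity, and you would have to do the same. Finally, "properness is immediate" is slightly too quick -- the positive system is not proper as written, since the sequential equations~\eqref{F-l-p} and~\eqref{Lp-plus} have monomials without an explicit factor $t$ -- but your contraction remark is exactly the fix the paper implements by iterating the system once. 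With these adjustments your argument coincides with the paper's proof.
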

\begin{proof}
  Again, we apply the theory of positive irreducible polynomial
systems~\cite[Sec.~VII.6]{flajolet-sedgewick}. 

The system of Proposition~\ref{def-syst-l-p} is not positive. To correct this, we
replace the series $\Lp_\gw$ (for $\gw$
balanced) and  $\Lpp_\gw$ (for $\gw$
balanced or quasi-balanced) by their ``positive''
versions:
$$
\Lp_\gw^+:= \Lp_\gw- \Kp_\gw, \qquad \Lppp{\gw}:=\Lpp_\gw- \Kpp_\gw \qquad
(\gw \hbox{ balanced)},
$$
$$
 \Lppp{\gw}:=\Lpp_\gw- \Kpp_{\overleftarrow\gw} \qquad (\gw \hbox{ quasi-balanced)}.
$$
In particular, $\Lp_\vareps$ is replaced by
$\Lp_\vareps^+:=\Lp_\vareps-1$ and $\Lppp{\vareps}$  coincides
with $\Lpp_\vareps$.  We alter the original system $\sf S_0$
as follows:
\begin{enumerate}
\item [(i)] For $\gw$ balanced, we replace the equation~\eqref{L-l-p}  defining
  $\Lp_\gw$ by the difference between~\eqref{L-l-p} and~\eqref{F-l-p}:
\beq\label{Lp-plus}
\Lp_\gw^+= \Lp_\vareps^+ \Lpp_\gw + \Lppp{\gw} +
\sum_{\gw=\gu \gv, \gv \not= \gw} \Lp_\gu \Kpp_\gv.
\eeq
\item [(ii)] For $\gw$ balanced or quasi-balanced, we replace the equation~\eqref{Lpp-l-p}  defining
  $\Lpp_\gw$ by the difference between~\eqref{Lpp-l-p}
  and~\eqref{Fp-l-p}  (written for $\gw$ if $\gw$ is balanced, for $\overleftarrow{\gw}$ otherwise):
\beq\label{Lpp-plus}
\Lppp{\gw}  = 2t \Lp_\vareps \mathbbm{1}_{\gw=\vareps}
+ t (\Lp_{\gw_p}-\Kp_{\gw_p}) \mathbbm{1}_{\gw\not = \vareps}
+ t \Lp_\vareps\left( \Lppp{\gw }
+	\sum\limits_
{\substack{\gu = \gv \gw, \gu\not = \gw \\ |\gu| \leq 2k-1 \\ \gu \text{ quasi-balanced}}}
 {\Lppp{\gu}}\right) .
\eeq
\item [(iii)] In the new system thus obtained, we replace every
  series $\Kp_\gw$ such that $\gw$ is not balanced by $0$, 
every
  series $\Lp_\gw$ (resp. $\Lpp_\gw$) such that $\gw$ is balanced by
  $\Kp_\gw+ \Lp^+_\gw$ (resp. ${\Kpp_\gw}+ \Lppp{\gw}$), and
  every series $\Lpp_\gw$ such that $\gw$ is quasi-balanced by
  $\Kpp_{\overleftarrow{\gw}}+ \Lppp{\gw}$.   In particular, the series
  $\Lc_{\gw_p}-K_{\gw_p}$ occurring in~\eqref{Lpp-plus} becomes $\Lc_{\gw_p}^+$
  when $\gw_p$ is balanced, $\Lc_{\gw_p}$ otherwise. The
    series  $\Lp_\gw$ (resp. $\Lpp_\gw$) that remain in the system are
    such that $\gw$ has balance at least~1 (resp. 2).
\end{enumerate}
We thus obtain a positive system, denoted $\sf S_1$, defining the following series:
\begin{itemize}
\item $\Kp_\gw$, for
$\gw$ balanced of length between 2 and  $2k-2$, 
\item $\Kpp_\gw$, for
$\gw$ balanced of length between 2 and  $2k$, 
\item $\Lp_\gw$ for $\gw$ valid  of balance at least~1 and length at
  most $2k-2$,
\item $\Lpp_\gw$ for $\gw$ valid  of balance at least~2 and length at
  most $2k-1$,
\item $\Lp^+_\gw$, for
$\gw$ balanced  of length at most  $2k-2$, 
\item $\Lppp{\gw}$, for $\gw$  balanced or quasi-balanced of length
  at most $2k-1$.
\end{itemize}
For instance, when $k=1$ we obtain the following system:
$$
\begin{cases}
\Kpp_{10} \!\!\!\!& = \  t+t(1+\Lp_\varepsilon^+)(\Kpp_{10}+ \Lppp{0}),
\\
\Lp_\vareps^+\!\!\!\!& = \  \Lp_\vareps^+{\Lpp_\vareps}^++ \Lppp{\vareps}, \\
\Lppp{\varepsilon} \!\!\!\!& = \   2t (1+\Lp_\vareps^+)+ 
 t (1+\Lp_\vareps^+)({\Lpp_\vareps}^++2\Lppp{0}), 
\\
\Lppp{0} \!\!\!\!& = \  t \Lp_\varepsilon^+ + t(1+\Lp_\varepsilon^+) \Lppp{0}.
\end{cases}
$$
Recall that the series we are interested in is $\Lp_\vareps^+$. But
then we can drop  the first equation of the above system. \emm This
size reduction occurs for any value of $k$,, and the positive system
$\sf S_2$ that we
will study is finally obtained by performing one last change:
\begin{itemize}
\item[(iv)] Delete the equations defining the series $\Kpp_\gw$, for
  $\gw$ of length $2k$.
\end{itemize}
Observe that all the series involved in $\sf S_2$ are well-defined by
this system. This comes from the fact that all the series $\Kpp_\gu$, for
  $\gu$ of length $2k$, that occurred in $\sf S_0$ came from the
  term $\Lpp_{\gu_s}- \Kpp_{\gu}$ of~\eqref{Lpp-l-p}, which now reads
  ${\Lppp{\gu_s}}$.

 Here is for instance the system obtained for $k=2$, which has three
equations less than~\eqref{eq:syst_inf_2-p}:
$$
\begin{cases}
\Kp_{01}=\Kp_{10}\!\!\!\!& = \  \Kpp_{01},
\\
\Kpp_{10} =\Kpp_{01}\!\!\!\! & = \  t + t(1+\Lp_\vareps^+)(\Kpp_{01}+\Lppp{1}), 
\\
\Lp_\vareps ^+ \!\!\!\! & = \ \Lp_\vareps^+\Lppp{\vareps}+ \Lppp{\vareps},
\\
\Lp_0 \!\!\!\! & = \   (1+\Lp_\vareps^+) (\Kpp_{10}+{\Lppp{0}}) ,
\\
\Lp_{00} = \Lp_{11}\!\!\!\!& = \  (1+\Lp_\vareps^+)\Lpp_{00},
\\
\Lp_{01}^+ \!\!\!\!& = \  \Lp_\vareps ^+(\Kpp_{01}+{\Lppp{01}})+ {\Lppp{01}},
\\
\Lppp{\vareps}\!\!\!\! & = \  2t(1+\Lp_\vareps^+)  +
t(1+\Lp_\vareps^+)(\Lppp{\vareps}+ 2(\Lppp{0} 
+{\Lppp{100}}+ {\Lppp{010}} + {\Lppp{110}})),
\\
\Lppp{0} = \Lppp{1} \!\!\!\! & = \   t \Lp_\vareps^+ +
 t(1+\Lp_\vareps^+)(\Lppp{0} 
+{\Lppp{100}} + {\Lppp{010}} + {\Lppp{110}}),
\\
\Lpp_{00}\!\!\!\!& = \  t\Lp_0+t(1+\Lp_\vareps^+)( \Lpp_{00}+ {\Lppp{100}}),
\\
{\Lppp{10}} = {\Lppp{01}}\!\!\!\!& = \  t\Lp_1+ t (1+\Lp_\vareps^+)({\Lppp{10}}
+ {\Lppp{010}} + {\Lppp{110}}),
\\
{\Lppp{100}} \!\!\!\! & = \  t \Lp_{10}^+  + t(1+\Lp_\vareps^+){\Lppp{100}} ,\\
{\Lppp{010}} \!\!\!\! & = \  t\Lp_{01}^+ + t(1+\Lp_\vareps^+){\Lppp{010}} , \\
{\Lppp{110}} \!\!\!\! & = \  t \Lp_{11}+ t(1+\Lp_\vareps^+){\Lppp{110}}. \\
\end{cases}
$$

\medskip
Let us now discuss  properness~\cite[p.~489]{flajolet-sedgewick}. The
system $\sf S_2$ that we have just obtained
is not proper.  However, the right-hand sides of the equations that
define  series with a prime ($\Kpp, \Lpp$ and ${\Lpp}^+$)  are multiples of
$t$ (see~\eqref{Fp-l-p},~\eqref{Lpp-l-p} and~\eqref{Lpp-plus}). In the
remaining equations, that is~\eqref{F-l-p},~\eqref{L-l-p} (for $\gw$
not balanced) and~\eqref{Lp-plus} (for $\gw$ balanced),
each term on the right-hand side  involves a series with a prime:
hence after one iteration of $\sf S_2$, one obtains a new system $\sf
S_3$ which is positive and proper.

Aperiodicity holds as in the previous section, and we are left with
irreducibility. Note that proving irreducibility for $\sf S_2$ or its
iterated version $\sf S_3$ is equivalent, so we focus on $\sf S_2$. As in the previous section, we denote by $\tLp_\gw$
the series $\Lp_\gw^+$ or $\Lp_\gw$, depending on whether $\gw$ is
balanced or not. Similarly, $\tLpp_\gw$ denotes $\Lppp{\gw}$ if
$\gw$ is balanced or quasi-balanced, and $\Lpp_\gw$
otherwise. Let us prove that all series depend on
  $\tLp_\vareps$. We
first observe that this holds for every $\Kpp$- or $\tLp$- or $\tLpp$-series (see~\eqref{Fp-l-p}, \eqref{L-l-p}, \eqref{Lpp-l-p},
\eqref{Lp-plus}, \eqref{Lpp-plus}). We are left with  the series
$\Kp_\gw$: but
it depends on $\Kpp_\gw$ (see~\eqref{F-l-p}), and hence on $\tLp_\vareps$. 

Conversely, let us prove that  $\tLp_\vareps$ depends on every other
series in the system. By~\eqref{Lp-plus}, it depends on $\Lpp_\vareps$.
Then by~\eqref{Lpp-plus} applied to $\gw=\vareps$, it depends on every series
$\Lpp_\gu$, where $\gu$ is quasi-balanced. Going back and forth
between the equations defining the $\Lp$- series and the $
\Lpp$-series (see \eqref{L-l-p}, \eqref{Lpp-l-p}, \eqref{Lp-plus},
\eqref{Lpp-plus}), and using an induction on the balance, we then see
that  $\tLp_\vareps$ depends on all series $\tLp_\gu$
(for $|\gu|\le 2k-2$)  and all series $
\tLpp_\gu$ (for $|\gu|\le 2k-1$). Then the first  term
of~\eqref{Lp-plus}, written as $\Lp_\vareps^+(\Kpp_\gw+\Lppp{\gw})$,
shows that   $\tLp_\vareps$
depends on all series $\Kpp_\gv$ with $|\gv|\le 2k-2$.
It remains to prove that  $\tLp_\vareps$ depends on the
$\Kp$-series. Let $\gu=a\gu_s$ be balanced of length at most $2k-2$,
and 
define $\gw=\gu_s \bar a$. This word has balance 2. The second term
of~\eqref{Lpp-l-p} involves $\Lp_{\gw_p}= \Lp_{\gu_s}= \Kp_\gu+
\Lp_{\gu_s}^+$. Hence $\Lpp_\gw$ depends on $\Kp_\gu$, and by
transitivity, $\tLp_\vareps$ depends on $\Kp_\gu$. This proves the
irreducibility of the system and concludes the proof of the proposition.
\end{proof}

\subsection{Back to examples}
We first return to the cases $k=1$ and $k=2$ studied in
Section~\ref{sec:ex-p}. When $k=1$, we obtained the cubic
equation~\eqref{eq:eq_inf_1-p} for $\Lp_\vareps^{(1)}$.  The discriminant has three positive roots, which are 1, and  (approximately)
$0.094$ and $15.9$. The second one is the radius of convergence, and
we obtain the lower bound $\bar \lambda_1\simeq 10.603$ on the growth
rate of Eulerian orientations. This improves significantly on the
growth rate $\la_1=9.68\ldots$ obtained from the set $\mathcal
L^{(1)}$. 

For $k=2$, we obtained the equation~\eqref{eq:eq_inf_2-p} satisfied
by $\Lp_\vareps^{(2)}$. The discriminant has two roots
in $(0,1)$, which are approximately
$0.0911$ and $0.414$. The first one is the radius of convergence, and
we obtain the lower bound $\bar \lambda_2\simeq 10.9759$ on the growth
rate of Eulerian orientations.

\medskip
When $k=3$, we find that $\Lp_\vareps^{(3)}$ satisfies an equation of degree
20 (see the {\sc Maple} sessions available on our web pages). The dominant coefficient only vanishes at $t=8$, and the
discriminant has only one relevant root, around 0.089. This gives the
lower bound $\bar \lambda_3\simeq11.2289$ on the growth rate  of
Eulerian orientations.

\medskip
When $k=4$, we did not compute the equation satisfied by  $\Lp_\vareps^{(4)}$, but
we estimated $\bar\lambda_4$ from the first 30 coefficients
of $\Lp_\vareps^{(4)}$ using quadratic
approximants~\cite{brak-guttmann}. We predict $\bar \lambda_4\simeq
11.41$. This value has then been confirmed by Bruno Salvy 
using the Maple package NewtonGF~\cite{pivoteau}, with which he obtained 10 digits of~$\bar\lambda_4$. This package also allows us to compute more
coefficients in~$\Lp_\vareps^{(4)}$. Moreover, Jean-Charles Faugère~\cite{faugere} has finally been able to
determine the equation for $\Lp_\vareps^{(4)}$, which has degree 258
in~$\Lp_\vareps^{(4)}$. 

 Similarly, we predict
$$
\bar\lambda_5\simeq 11.56, \qquad \bar\lambda_6\simeq11.68.
$$

\section{Supersets of Eulerian orientations, via the standard decomposition}
\label{sec:classic-upper}
We now want to define, and count,  supersets of Eulerian
orientations. Their \gfs\ will be described by functional equations
involving divided differences (as in~\eqref{eq-func-M}). The proof of their
algebraicity is non-trivial, relying on a deep result from Artin's approximation
theory (Theorem~\ref{thm:popescu}).

\medskip
Recall that  Eulerian orientations
can be  obtained recursively from the atomic map by either:
\begin{itemize} 
\item the merge of two orientations $O_1, O_2 \in \PEO$ (with the root
  loop oriented in either way),
\item or a legal split on an orientation $O' \in \PEO$.
\end{itemize}

We now define the sets $\peop$. The idea is that we allow illegal
$i$-splits, provided $i$ is larger than $k$.
\begin{definition} \label{def:ub-classic}
Let $k\ge 1$. 
Let $\peop$ be the set of planar orientations obtained recursively
from the atomic map by either:
\begin{itemize} 
\item the merge of two orientations $O_1, O_2 \in \peop$  (with the root
  loop oriented in either way),
\item or a legal $i$-split on a map $O' \in \peop$ with $i\leq k$ (small split), 
\item or an arbitrary split on a map $O' \in \peop$ with  $ i>k $ (large split). If the split is legal,  the root edge is oriented in the
  only way that makes the new orientation Eulerian.  Otherwise, it is oriented
  away from the root vertex.
\end{itemize}
\end{definition}

Observe that all
Eulerian orientations belong to $\peop$. Moreover,  the sets $\peop$
form a decreasing 
sequence, as fewer illegal splits are performed as $k$ grows.
Finally, for $k\ge n$ (and even for $k\ge n-2$), all orientations of size $n$ in $\peop$ are
  Eulerian. Hence the
limit of the sets $\peop$ is the set $\PEO$ of
all Eulerian orientations.

Another  important observation is that, if the root vertex  of an
orientation of $\peop$ has degree at most $2k$, then the root word of this
orientation is balanced. 

\subsection{Functional equations for $\boldsymbol \peop$}
We now fix an integer $k$.  For a word $\gw$ on $\{0,1\}$, let
$\Ucl_\gw^{(k)}(t;x) \equiv \Ucl_\gw(x)$ denote the generating function of
orientations of $\peop$ whose root word ends with $\gw$, counted by
the edge number (variable $t$) and the half-degree of the root vertex
(variable $x$). Let  $\Tc_\gw^{(k)}(t) \equiv \Tc_\gw$ denote the
generating function of orientations of $\peop$ having root word
exactly $\gw$. We do not record in this series the root
degree (which is the length of $\gw$). To lighten notation, we often denote simply by
$\Ucun{\gw}$ the edge \gf\ $\Ucl_\gw(1)\equiv \Ucl_\gw(t,1)$, and by
$\Uc{\gw}$ the refined \gf\ $\Ucl_\gw(x)\equiv \Ucl_\gw(t;x)$.

Note that $\Tc_\gw=0$ if $\gw$ is not balanced and that
$\Tc_\vareps=1$. Now, for $\gw$ balanced of length between $2$ and
$2k$, we have 
\beq\label{Tw-c}
\Tc_\gw=
	t \sum\limits_{a \gu \bar{a} \gv = \gw} \Tc_{\gu} \Tc_{\gv}
	 +t  \Ucun{\gw_s} .
   \eeq
The first term counts orientations obtained by a merge. The second
one counts those obtained by a split, which is necessarily 
small since we have assumed $|\gw|\le 2k$. Note the analogy with~\eqref{F-l-c}.

For $\gw$ valid of length at most $2k-1$, let us now prove the
following identity:
\begin{multline}\label{Uw-c}
\Uc{\gw}=  
 	\mathbbm{1}_{\gw=\vareps} + 2tx \Uc{\vareps}\Uc{\gw} + tx
        \sum_{\gw=\gu a \gv }\Uc{\gu} x^{|\gv|/2} \Tc_\gv
+ t x^{|\gw|/2} \sum_{\gw=a\gu \bar a \gv} \Tc_\gu \Tc_\gv
\\
+ t\sum\limits_{\substack{\gu=\gv\gw 
 \\2\le  |\gu| \le 2k\\ \gu \text{ balanced } } } x^{|\gu|/2}
\Ucun{\gu_s}
+\frac{tx}{x-1} \left( \Uc{\gw}-x^k\Ucun{\gw}\right)-\frac{tx}{x-1} \sum_{\substack{\gu=\gv\gw\\ |\gu| \le 2k-2}}
 \Tc_\gu (x^{|\gu|/2}-x^k).
\end{multline}
The first line is similar to the first line of~\eqref{L-l-c2}: it  counts
the atomic map and orientations obtained from a merge. The only
difference is that we now record the root degree. On the second
line, the first sum counts orientations obtained by a small split
(with root word  $\gu$).
Let us explain the remaining terms, which count orientations obtained
by a large split, legal or not, of an orientation $O'$ whose root word
ends (necessarily) with $\gw$. Given an orientation $O'$ with  root vertex
degree $2d$, with $d>k$, the \gf\ of orientations obtained from $O'$ by a large
split is
$$
t^{1+\ee(O')} \left( x^{k+1}+ x^{k+2} + \cdots + x^d\right) =
t^{1+\ee(O')}\ \frac{x^{d+1}- x^{k+1}}{x-1}.
$$
Let us underline that we cannot apply a large split  to an orientation $O'$
whose root word $\gu$ satisfies $|\gu|\le 2k$. Hence the \gf\ of
orientations obtained by a large split is
$$
\frac {tx} {x-1}\left( \left( \Uc{\gw}- \sum_{\gu=\gv\gw, |\gu| \le 2k}
  x^{|\gu|/2} \Tc_\gu\right) - x^k \left( \Ucun{\gw}- 
\sum_{\gu=\gv\gw, |\gu| \le 2k}
 \Tc_\gu\right)\right),
$$
which gives the last two terms of~\eqref{Uw-c} (the terms
$\Tc_\gu$ with $|\gu|=2k$ do not contribute).

\medskip
\begin{remark}
  In the proof of~\eqref{Uw-c}, we have tried to follow the same steps
  as in the proof of~\eqref{L-l-c2}.  However, comparing~\eqref{Tw-c}
  and~\eqref{Uw-c} suggests to replace~\eqref{Uw-c} by a lighter
  equation:
\begin{multline}\label{Uw-c-bis}
\Uc{\gw}=  
  x^{|\gw|/2}\Tc_\gw + 2tx \Uc{\vareps}\Uc{\gw} + tx
        \sum_{\gw=\gu a \gv }\Uc{\gu} x^{|\gv|/2} \Tc_\gv
\\
+ t\sum\limits_{\substack{\gu=\gv\gw 
 \\ |\gu| \le 2k-1\\ \gu \text{ quasi-balanced } } } x^{(1+|\gu|)/2}
\Ucun{\gu}
+\frac{tx}{x-1} \left( \Uc{\gw}-x^k\Ucun{\gw}\right)-\frac{tx}{x-1}
\sum_{\substack{\gu=\gv\gw\\ |\gu| \le 2k-2}} 
 \Tc_\gu (x^{|\gu|/2}-x^k).
\end{multline}
\end{remark}

\begin{prop}\label{def-syst-u-c}
  Consider the collection of equations consisting of:
  \begin{itemize}
  \item Equation~\eqref{Tw-c}, written for all balanced words $\gw$
    of length between $2$ and $2k-2$,
\item Equation~\eqref{Uw-c-bis}, written for all  valid words $\gw$
of  length at most $ 2k-1$.
  \end{itemize}
In this collection, replace all trivial $\Tc$-series by their value:
$\Tc_\gw=0$ when $\gw$ is not balanced, $\Tc_\vareps=1$. Let $R_0$
denote the resulting system. The number of
series it involves is $\e(k)-{2k\choose k}$, where $\e(k)$ is given
by~\eqref{ek-def}. Moreover,  $R_0$ defines uniquely these series. Its
size  can be (roughly) divided by two upon exploiting the
$0/1$ symmetry.
\end{prop}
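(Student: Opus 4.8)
The plan is to mirror almost verbatim the proof of Proposition~\ref{prop:l-c}, since the system $R_0$ is obtained from the analogous system $S_0$ by essentially "cutting it in half and adding a variable $x$". First I would check that $R_0$ defines all the series it involves. This is the usual argument: one verifies that every series appearing on a right-hand side of~\eqref{Tw-c} or~\eqref{Uw-c-bis} also appears as the left-hand side of one of the retained equations (for this one only needs that a factor of a valid word is valid, and that $\gu_s$ is quasi-balanced when $\gu$ is balanced, so $\Ucun{\gu}$-type terms are covered by the $\Uc{\gw}$ equations evaluated at $x=1$). Then one iterates the system: the key point is the factor $t$ on the right-hand sides of~\eqref{Tw-c} and~\eqref{Uw-c-bis}. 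A subtlety here, absent in Proposition~\ref{prop:l-c}, is the divided-difference term $\frac{tx}{x-1}(\Uc{\gw}-x^k\Ucun{\gw})$: it still carries a factor $t$, and the operator $F(x)\mapsto \frac{F(x)-x^kF(1)}{x-1}$ is triangular on the coefficients of $x$ (it lowers degree in $x$ by one when $|\gw|/2<k$, which is the relevant range), so it does not obstruct the iteration. I would spell this out just enough to reassure the reader that the term is harmless.

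Next I would count the series. The cleanest route is to compare with $S_0$, exactly as in the proof of Proposition~\ref{def-syst-l-p}. In $S_0$ the $\e(k)$ series are the $\Kc_\gw$ for $\gw$ balanced of length $2$ to $2k$, and the $\Lc_\gw$ (or $\Lc^+_\gw$) for $\gw$ valid of length at most $2k-1$. Passing to $R_0$: the $L$-series are replaced one-for-one by the bivariate $U$-series (same index set, valid words of length $\le 2k-1$), while the $\Tc$-series are indexed by balanced words of length $2$ to $2k-2$ only --- we have dropped the balanced words of length exactly $2k$, of which there are $\binom{2k}{k}$. Hence the count is $\e(k)-\binom{2k}{k}$. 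I would present this as a one-line bookkeeping argument referring back to~\eqref{ek-def}. The $0/1$ symmetry statement is immediate: swapping $0\leftrightarrow 1$ throughout a word $\gw$ leaves $\Tc_\gw$ unchanged and, because the construction of $\peop$ and the definitions of root words are symmetric under reversing all edge orientations, leaves $\Uc{\gw}$ unchanged too; this identifies series in pairs and roughly halves the system.

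I do not expect a serious obstacle here --- the proposition is a bookkeeping statement, and all the combinatorial content (that the listed equations are correct) has already been established in the derivations of~\eqref{Tw-c}, \eqref{Uw-c} and~\eqref{Uw-c-bis}. The one place that warrants care is the claim that $R_0$ \emph{uniquely} determines the $U$-series: since these are bivariate and the defining equations involve the divided difference in $x$, one must argue at the level of the formal power series in $t$ with polynomial coefficients in $x$. The iteration in $t$ shows that the coefficient of $t^n$ in each $\Uc{\gw}$ is determined by the coefficients of $t^{<n}$ via polynomial (indeed, for the divided-difference term, rational-then-polynomial, since the numerator vanishes appropriately at $x=1$) operations, so the solution is unique as a formal power series in $t$ over $\mathbb{Q}[x]$ --- and in fact the degrees in $x$ stay bounded because a large split raises the half-degree by at most the current half-degree plus a controlled amount, but for uniqueness we only need the $t$-adic argument. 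I would state this and leave the routine verification to the reader, exactly in the spirit of the proof of Proposition~\ref{prop:l-c}.
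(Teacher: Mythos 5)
Your proposal is correct and coincides with the paper's treatment: the paper's entire proof of this proposition is the sentence ``The proof is similar to the proof of Proposition~\ref{prop:l-c}'', and your bookkeeping (the $U$-series replace the $L$-series one-for-one over valid words of length at most $2k-1$, while the $\Tc$-series drop the $\binom{2k}{k}$ balanced words of length exactly $2k$) together with the $t$-adic iteration argument for uniqueness is exactly what that reference intends. One small caveat: your justification of the $0/1$ symmetry via reversing all edge orientations does not literally work for $\peop$, because illegal large splits always orient the new root edge \emph{away} from the root vertex, so the recursive construction is not reversal-invariant; the symmetry $\Uc{\gw}=\Uc{\sigma(\gw)}$, $\Tc_\gw=\Tc_{\sigma(\gw)}$ (with $\sigma$ the letter complementation) is nonetheless true and is best seen from the fact that the system $R_0$ itself is invariant under $\gw\mapsto\sigma(\gw)$, whence the identity follows by the same induction on the coefficient of $t^n$ that gives uniqueness.
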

The proof is similar to the proof of Proposition~\ref{prop:l-c}.

\begin{remark}
  As in Remark~\ref{forward}, if $\gw$ is such that $0\gw$ and $1\gw$ are
both valid of length less than $2k$, we can replace~\eqref{Uw-c-bis} by the simpler forward equation:
$$
\Uc{\gw}= x^{|\gw|/2}\Tc_\gw + \Uc{0\gw}+ \Uc{1\gw}.
$$
This does not increase the size of the system.
\end{remark}
\subsection{Examples}

\subsubsection{When  $\boldsymbol{k=1}$,}
 the system of
Proposition~\ref{def-syst-u-c} contains $\e(1)-2=3$ equations. Upon
exploiting the 0/1 symmetry, it reads:
\beq\label{u-ck=1}
\begin{cases}%
\Uc{\vareps}\! \!\!\!& = \ 1+2tx(\Uc{\vareps}) ^2 + 2tx \Ucun{0} + \frac{tx}{x-1}
(\Uc{\vareps} -x \Ucun{\vareps}) +tx,
\\
\Uc{0}\!\!\!\!& = \  2tx\Uc{\vareps}\Uc{0} + tx \Uc{\vareps}+ tx \Ucun{0} + \frac{tx}{x-1}
(\Uc{0} -x \Ucun{0}).
\end{cases}
\eeq
The first equation can be replaced by the forward equation
$\Uc{\vareps}=1+2\Uc{0}$. 
We explain in Section~\ref{sec:quadratic} how to solve this system.

\subsubsection{When  $\boldsymbol{k=2}$,}
 the system of
Proposition~\ref{def-syst-u-c} contains $\e(2)-6=15$ equations. Upon
exploiting  the 0/1
symmetry and (some) forward equations, it
reads:
\beq\label{u-c-k=2-gros}
\begin{cases}%
\Tc_{10} \!\!\!\!& = \  t +t \Ucun{0},
\\
\Uc{\vareps}\!\!\!\!& = \ 1+2\Uc{0},
\\
\Uc{0}=\Uc{1} 
\!\!\!\!& = \  2tx\Uc{\vareps}\Uc{0}+tx\Uc{\vareps}+ t x\Ucun{0}
+tx^2(\Ucun{100}+\Ucun{010}+\Ucun{110})
+ \frac{tx}{x-1} ( \Uc{0}-x^2\Ucun{0}) +tx^2\Tc_{10},
\\
 \Uc{10}=\Uc{01}\!\!\!\!& = \  x\Tc_{10}+\Uc{110}+\Uc{010},
\\
\!\!\!\!& = \  x \Tc_{10}+2tx \Uc{\vareps}\Uc{10} +
tx\Uc{1}+tx^2(\Ucun{010}+\Ucun{110})
+ \frac{tx}{x-1}(
\Uc{10}-x^2\Ucun{10}) +tx^2\Tc_{10},
\\
\Uc{00}=\Uc{11}\!\!\!\!& = \  2tx \Uc{\vareps} \Uc{00} + tx
\Uc{0}+tx^2\Ucun{100} + \frac{tx}{x-1}( \Uc{00}-x^2\Ucun{00}),
\\
\Uc{100}\!\!\!\!& = \  2tx \Uc{\vareps} \Uc{100} + tx\Uc{10}+tx^2\Ucun{100} + \frac{tx}{x-1}( \Uc{100}-x^2\Ucun{100}),
\\
\Uc{010}\!\!\!\!& = \  2tx \Uc{\vareps} \Uc{010} + tx
\left(\Uc{01}+\Uc{\vareps} x T_{10}\right) 
+tx^2\Ucun{010}
+ \frac{tx}{x-1}( \Uc{010}-x^2\Ucun{010}),
\\
\Uc{110}\!\!\!\!& = \  2tx \Uc{\vareps} \Uc{110} + tx
\left(\Uc{11}+\Uc{\vareps} x T_{10}\right) 
+tx^2\Ucun{110}
+ \frac{tx}{x-1}( \Uc{110}-x^2\Ucun{110}).
\end{cases}
\eeq
We explain in Section~\ref{sec:quadratic} below how to solve this
system.

\subsection{Algebraicity}
\label{sec:alg-c}
Since the early work of Brown in the sixties on the \emm
quadratic method,~\cite{brown-square}, a lot has been known about equations involving divided differences of the
form $(F(t;x)-F(t;1))/(x-1))$. However, most of the literature deals with a single
equation, not with a system~\cite{mbm-jehanne,goulden-jackson}. In order to prove that the series
$\Ucl_\vareps^{(k)}(t;x)$ that counts 
orientations of $\peop$ is algebraic, we use a deep theorem from
Artin's approximation theory, due to Popescu~\cite{kurke,popescu}. The
form we will 
need is given below. We recall
  that $\cs[[z_1, \ldots, z_\ell]]$ is the ring of \fps\ in the
  variables $z_1, \ldots, z_\ell$, with complex coefficients, and that a series $Z$ in this ring is
  \emm algebraic, if it satisfies a non-trivial polynomial equation $\Pol(z_1,
  \ldots, z_\ell,Z)=0$.

\begin{theorem}[\cite{popescu}, Thm.~1.4]\label{thm:popescu}
Consider a polynomial system of $n$ equations in  $\ell+n$ variables over $\cs$, written
as $P_i(z_1, \ldots, z_\ell,y_1, \ldots, y_n)=0$, for $1\le i \le
n$. Let  $(d_1,
\ldots, d_n)$ be a sequence of integers in $\{0, 1, \ldots, \ell \}$.
Assume that there exists an $n$-tuple $\mathcal Y=(Y_1, \ldots, Y_n)$ of 
series in $\cs[[z_1, \ldots, z_\ell]]$ that satisfies the following
conditions:
\begin{itemize}
\item the $n$-tuple $\mathcal Y$ solves this system, that is,
$$
P_i(z_1, \ldots, z_\ell,Y_1, \ldots, Y_n)=0 \qquad \hbox{for }
1\le i\le n,
$$
\item for
 $1\le i\le n$, the series $Y_i$ does not
depend on the variables $z_j$ such that $j>d_i$
 (if
  $d_i=\ell$, then there is no condition on the series $Y_i$).
\end{itemize}
Then there exists an $n$-tuple $(Z_1, \ldots, Z_n)$ of \emm algebraic,
series in  $\cs[[z_1, \ldots, z_\ell]]$ that solves the system and
satisfies the same dependence conditions as $\mathcal Y$.

In particular, if the system has a unique solution satisfying the
dependence conditions, then this solution is algebraic.
\end{theorem}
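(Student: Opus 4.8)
The statement is the \emph{nested} form of Artin approximation, and it is not one we can establish by elementary means: the plan is to reduce it to Popescu's \emph{General N\'eron Desingularization} theorem, which asserts that a regular homomorphism $A\to B$ of Noetherian rings exhibits $B$ as a filtered colimit of smooth, finitely presented $A$-algebras~\cite{kurke,popescu}. Observe first that the final sentence of the theorem is a formality: once we know that \emph{some} algebraic $n$-tuple satisfying the prescribed dependence conditions solves the system, uniqueness forces it to coincide with $\mathcal Y$, so the real task is merely to produce one algebraic solution respecting the independences $Y_i\in\cs[[z_1,\dots,z_{d_i}]]$.

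Set $\hat R_j:=\cs[[z_1,\dots,z_j]]$, so that $\cs=\hat R_0\subset\hat R_1\subset\cdots\subset\hat R_\ell$, and let $R_j\subset\hat R_j$ be the subring of algebraic power series, i.e.\ the Henselization of $\cs[z_1,\dots,z_j]$ at the origin, whose completion is $\hat R_j$. Each $\hat R_j$ is an excellent Henselian local ring, and each inclusion $\hat R_{j-1}\hookrightarrow\hat R_j=\hat R_{j-1}[[z_j]]$, as well as $R_j\hookrightarrow\hat R_j$, is a regular homomorphism. The first step is the ordinary (non-nested) approximation property, deduced from N\'eron desingularization in the standard way: applying it to $R_\ell\to\hat R_\ell$ writes $\hat R_\ell$ as a filtered colimit of $R_\ell$-algebras $C_i$ that are smooth and of finite type; the formal solution $\mathcal Y$ defines an $\hat R_\ell$-point of $R_\ell[y_1,\dots,y_n]/(P_1,\dots,P_n)$, which therefore factors through some $C_i$; and smoothness of $C_i$ together with Henselianity of $R_\ell$ lets one lift the reduction of this point modulo a high power of the maximal ideal to an honest $R_\ell$-point, yielding an algebraic solution congruent to $\mathcal Y$ modulo $(z_1,\dots,z_\ell)^c$ for any prescribed $c$.

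To obtain in addition the \emph{nested} conclusion, I would induct on $\ell$, the case $\ell=1$ being classical, peeling off the variable $z_\ell$. Reorder the indices so that $d_1\le\cdots\le d_n$; for $i$ with $d_i<\ell$ the series $Y_i$ already lies in $\hat R_{\ell-1}$, while for $i$ with $d_i=\ell$ it uses $z_\ell$ genuinely. Treating the $z_\ell$-direction as a one-parameter deformation over $\hat R_{\ell-1}$ and applying N\'eron desingularization to $\hat R_{\ell-1}\to\hat R_\ell$, one factors the relevant part of the system through a smooth $\hat R_{\ell-1}$-algebra; such an algebra is, locally, a polynomial extension followed by an \'etale map, so its points over $\hat R_\ell=\hat R_{\ell-1}[[z_\ell]]$ are controlled by Hensel's lemma in the single variable $z_\ell$ over the coefficient ring $\hat R_{\ell-1}$. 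This lets one first solve for the ``$z_\ell$-free'' part of the solution over $R_{\ell-1}$ --- to which the induction hypothesis applies, splitting it further according to the remaining values $d_i<\ell$ --- and then reconstruct the components indexed by the $d_i=\ell$ block by a one-variable lifting, all within algebraic power series. Reading off the outputs of the induction along the tower $R_0\subset R_1\subset\cdots\subset R_\ell$ places each $Y_i$ in $R_{d_i}$, as desired.

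The crux of the argument --- and the reason an elementary proof is not available --- is precisely the preservation of the \emph{triangular dependence structure} during the approximation: plain Artin approximation returns an algebraic solution at once, but with no control over which variables each component involves, and re-imposing the conditions $Y_i\in\cs[[z_1,\dots,z_{d_i}]]$ is exactly what requires the full force of N\'eron desingularization (regular morphisms realised as colimits of smooth ones), rather than merely the implicit function theorem or Hensel's lemma. A secondary point one must check is that we work with \emph{formal}, not convergent, power series; this is harmless because $\cs[[z_1,\dots,z_j]]$ is excellent, which legitimises the regularity hypotheses needed to invoke Popescu's theorem.
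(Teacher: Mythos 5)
This statement is not proved in the paper at all: it is quoted verbatim from Popescu (Theorem~1.4 of \cite{popescu}) and used as a black box, precisely because it is a deep result of Artin approximation theory whose proof lies far outside the scope of the paper. So there is no proof in the paper to compare yours against; the honest ``proof'' here is the citation.

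Judged on its own terms, your proposal correctly reduces the final clause to the existence statement (uniqueness plus existence of an algebraic solution with the right dependences does give algebraicity of the unique solution), and the overall strategy you describe --- General N\'eron Desingularization for the regular morphisms $R_j\to\hat R_j$ and $\hat R_{j-1}\to\hat R_j$, factorization of the formal solution through a smooth finite-type algebra, and lifting via Henselianity --- is indeed the route taken in the literature for the non-nested case. But the argument is not complete where it matters most. The entire content of the theorem beyond ordinary Artin approximation is the preservation of the nested conditions $Y_i\in\mathbb{C}[[z_1,\dots,z_{d_i}]]$, and your inductive step handles this by assertion: the claim that points of a smooth $\hat R_{\ell-1}$-algebra over $\hat R_{\ell-1}[[z_\ell]]$ are ``controlled by Hensel's lemma in the single variable $z_\ell$'' requires the relevant Jacobian to be a unit, which is not given, and the statement that one can ``first solve for the $z_\ell$-free part over $R_{\ell-1}$ and then reconstruct the $d_i=\ell$ block by a one-variable lifting'' is exactly the delicate point (nested approximation is known to \emph{fail} for other nestings, e.g.\ in the convergent category, so some genuine input about the initial-segment structure is indispensable and must be exhibited). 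One also needs, and you do not supply, the lemma that an algebraic power series in $z_1,\dots,z_\ell$ lying in $\mathbb{C}[[z_1,\dots,z_d]]$ is already algebraic over $\mathbb{C}[z_1,\dots,z_d]$. As it stands your text is a plausible roadmap to Popescu's proof, not a proof; in the context of this paper the correct move is simply to cite the theorem, as the authors do.
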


\noindent{\bf An application.} To our knowledge, this theorem has not been applied yet in a
combinatorial context. So, before we use it to prove the algebraicity
of $\Ucl^{(k)}_\vareps(t;x)$, let us examine its application to a simple
equation, namely~\eqref{eq-func-M}.

First, observe that the algebraicity of $M(t;x)$ is not
obvious. Clearly, if we could prove that $M(t;1)$ is algebraic, we
would be done with $M(t;x)$ as well, but why should $M(t;1)$ be
algebraic? We can apply the above theorem as follows. Let us denote
$t=z_1$ and $x=1+z_2$ (we shall explain later why we need to
translate the  variable $x$). We consider the system in
$z_1, z_2$, $y_1$ and $y_2$ consisting of the following (single) equation:
$$
z_2 y_2= z_2 + z_1 z_2 (1+z_2) y_2^2 + z_1(1+z_2) (y_2-y_1).
$$
Take $d_1=1$ and $d_2=2$. Then~\eqref{eq-func-M} shows that the pair
$(Y_1, Y_2):=(M(t;1), M(t;x))$ solves the above equation. Moreover
$Y_1=M(t;1)$ is independent of $z_2=x-1$, while $Y_2=M(t;x)$
depends on both variables $z_1$ and $z_2$, in accordance with
$d_1=1$ and $d_2=2$. 

Let us now prove that there cannot be another solution of this system
in the ring $\cs[[z_1,z_2]]$
such that $Y_1$ is independent of $z_2$. First, setting $z_2=0$ in
the equation shows that $Y_1$ must be the specialization of $Y_2$ at
$z_2=0$. This, combined with the factor $z_1$ occurring in every
non-initial term in the right-hand side, implies that
the coefficient of $z_1^n$ in $Y_2$ can be computed by induction of
$n$, starting from the constant coefficient $1$. Hence the uniqueness of
$(Y_1,Y_2)$. The algebraicity of $M(t;x)$ now follows from the above theorem.

Note that, if we had used $z_2=x$ instead of $z_2=x-1$, we could not
apply the last part of Theorem~\ref{thm:popescu}. The equation would read 
$$
(z_2-1) y_2= (z_2-1) + z_1 (z_2-1) z_2 y_2^2 + z_1z_2 (y_2-y_1),
$$
but this equation has many solutions in the ring $\cs[[z_1,z_2]]$ of
\fps\ in $z_1=t$ and $z_2=x$. For instance, one can take $Y_1=0$ and  
$$
Y_2= \frac{1-x+tx -\sqrt{(1-x+tx)^2-4tx(1-x)^2}}{2tx(1-x)}.
$$
Theorem~\ref{thm:popescu} tells us that at least one of these
solutions is algebraic, but we need uniqueness to conclude that \emm our,
solution is algebraic. The key point is that a series in $\cs[[z_1, z_2]]$ can always be
specialized at $z_2=0$, but not at $z_2=1$. 

\medskip
We now apply Theorem~\ref{thm:popescu} to the larger example of
orientations of $\peop$.

\begin{prop}\label{prop:alg-c}
  For any $k\ge 1$, the \gf\ $\Ucl_\vareps^{(k)}(t;x)$ that counts orientations
  of $\peop$ is algebraic.
\end{prop}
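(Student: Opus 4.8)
The plan is to mirror the worked example that follows Equation~\eqref{eq-func-M}, applying Theorem~\ref{thm:popescu} to the system $R_0$ of Proposition~\ref{def-syst-u-c}. First I would set $t=z_1$ and $x=1+z_2$; exactly as for $M(t;x)$, this translation of $x$ is essential, since the univariate series $\Ucun{\gw}=\Ucl_\gw(t;1)$ must be read as the specialization of $\Ucl_\gw(t;x)$ at $z_2=0$, a legitimate operation in $\cs[[z_1,z_2]]$, whereas no specialization at $x=1$ is available there. Next I would clear the divided differences in~\eqref{Uw-c-bis}: since $\Uc{\gw}-x^k\Ucun{\gw}$ vanishes at $x=1$ and each $x^{|\gu|/2}-x^k$ with $|\gu|\le 2k-2$ is divisible by $x-1$, multiplying~\eqref{Uw-c-bis} by $x-1=z_2$ produces a polynomial equation in $z_1,z_2$ and the series $\Tc_\gw,\Ucun{\gw},\Uc{\gw}$; together with~\eqref{Tw-c}, which is already polynomial, this gives a polynomial system $\widehat R$. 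I would then apply Theorem~\ref{thm:popescu} with $z$-variables $z_1,z_2$, one $y$-variable for each series of $\widehat R$, and dependence bounds $d_i$ equal to $1$ (independence of $z_2$) for the variables representing the univariate series $\Tc_\gw$ and $\Ucun{\gw}$, and equal to $2$ (no constraint) for those representing the bivariate series $\Uc{\gw}$, just as $d_1=1$ and $d_2=2$ were used above for $M(t;1)$ and $M(t;x)$. By Proposition~\ref{def-syst-u-c}, the genuine \gfs\ of orientations of $\peop$ form a solution of $\widehat R$ obeying these dependence conditions.

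The crux is to show that this is the \emph{only} solution of $\widehat R$ in $\cs[[z_1,z_2]]$ satisfying the dependence conditions. Setting $z_2=0$ in the cleared form of~\eqref{Uw-c-bis} annihilates every term carrying an explicit factor $z_2$, and also the $\Tc_\gu$-sum coming from the last term of~\eqref{Uw-c-bis} (as $x^{|\gu|/2}-x^k$ vanishes at $x=1$), so the equation collapses to $0=t\,(\Uc{\gw}|_{z_2=0}-\Ucun{\gw})$; since $\cs[[z_1,z_2]]$ is an integral domain and $t\ne 0$, any admissible solution must satisfy $\Ucun{\gw}=\Uc{\gw}|_{z_2=0}$. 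Hence $\Uc{\gw}-\Ucun{\gw}$ is divisible by $z_2$, the divided differences $(\Uc{\gw}-\Ucun{\gw})/(x-1)$ are genuine power series, and~\eqref{Uw-c-bis} holds in its original form. Now, for $\gw\ne\vareps$, every term on the right-hand side of~\eqref{Uw-c-bis} is a multiple of $t$: the ones written with an explicit $t$ trivially, and the remaining term $x^{|\gw|/2}\Tc_\gw$ because $\Tc_\gw$ itself is a multiple of $t$ by~\eqref{Tw-c} (for $\gw=\vareps$ this term is the initial constant $\Tc_\vareps=1$). Consequently, starting from $[t^0]\Uc{\gw}=\mathbbm{1}_{\gw=\vareps}$, the coefficient of $t^{n+1}$ in every $\Uc{\gw}$ is determined by the coefficients of $t^{\le n}$ in all the series --- the product terms and the divided-difference terms only involve lower powers of $t$ once the factor $t$ is taken out --- and then $\Ucun{\gw}=\Uc{\gw}|_{z_2=0}$ and, through~\eqref{Tw-c}, $\Tc_\gw$ are determined as well. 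This establishes uniqueness, so Theorem~\ref{thm:popescu} yields that every series of $\widehat R$ is algebraic; in particular $\Ucl_\vareps^{(k)}(t;x)=\Uc{\vareps}$ is algebraic.

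I expect the main obstacle to be this uniqueness step, and within it the necessity of the translation $x=1+z_2$: over $\cs[[t,x]]$ the cleared system would admit spurious solutions in which the variable playing the role of $\Ucun{\gw}$ bears no relation to $\Uc{\gw}$ (compare the two solutions displayed for the toy equation after~\eqref{eq-func-M}), so Theorem~\ref{thm:popescu} could not be used to single out \emph{our} solution. The remaining work --- writing out the polynomial system $\widehat R$ explicitly and checking that our \gfs\ satisfy it --- is bookkeeping of the kind already carried out for $k=1$ and $k=2$, and the $0/1$ symmetry, which only halves the size of the system, is irrelevant to algebraicity.
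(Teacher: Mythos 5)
Your proposal is correct and follows essentially the same route as the paper's proof: translate $x=1+z_2$, clear the divided differences by multiplying by $z_2$ to get the polynomial system (the paper's~\eqref{eqU-popescu}), observe that the genuine generating functions solve it with the stated dependences, and obtain uniqueness by setting $z_2=0$ (forcing $\Ucun{\gw}=\Uc{\gw}|_{z_2=0}$) followed by induction on the power of $t$, so that Theorem~\ref{thm:popescu} applies. The only differences are presentational (the paper writes out the cleared system explicitly and fixes $\Ac_\vareps=1$ as a constant rather than an unknown), and your extra remarks on why the translation $x=1+z_2$ is indispensable match the paper's own discussion of the toy example~\eqref{eq-func-M}.
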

\begin{proof}
  Again, we take as variables $z_1=t$ and $z_2=x-1$. For short, we
  denote $z_2$ by $z$.  We consider
  the  polynomial system consisting of the following equations, which
  mimic~\eqref{Tw-c} and~\eqref{Uw-c-bis}. For
  $\gw$ balanced of length between 2 and  $2k-2$,
$$
\Ac_\gw=
	t \sum\limits_{a \gu \bar{a} \gv = \gw} \Ac_{\gu} \Ac _{\gv}
	 +t  \Cc_{\gw_s} ,
$$
and for $\gw$ valid of length at most $2k-1$,
   \begin{multline}\label{eqU-popescu}
  z   \Bc_\gw =  
 z (1+z)^{|\gw|/2}\Ac_\gw + 2tz(1+z) \Bc_\vareps \Bc_\gw  + tz(1+z)
        \sum_{\gw=\gu a \gv }\Bc_\gu  (1+z)^{|\gv|/2} \Ac_\gv
\\+ tz\sum\limits_{\substack{\gu=\gv\gw 
 \\ |\gu| \le 2k-1\\ \gu \text{ quasi-balanced } } } (1+z)^{(1+|\gu|)/2}
\Cc_\gu
\\
+{t(1+z)} \left( \Bc_\gw -(1+z)^k\Cc_\gw \right)-{t(1+z)} \sum_{\substack{\gu=\gv\gw\\ |\gu| \le 2k-2}}
 \Ac _\gu ((1+z)^{|\gu|/2}-(1+z)^k),
   \end{multline}
where $\Ac_\vareps=1$. The variables $\Ac_\gw$, $\Bc_\gw$ and $\Cc_\gw $
play the role of the  $y_i$ in Theorem~\ref{thm:popescu}.  By construction, the series 
$$
\Ac_\gw:= \Tc_\gw(t), \qquad \Bc_\gw:= \Ucl_\gw(t;1+z), \quad \Cc_\gw:=
\Ucl_\gw(t;1)
$$
solve the system. Moreover,  $\Ac_\gw$ and
$\Cc_\gw$ do not depend on $z_2=z$.

By Theorem~\ref{thm:popescu}, it suffices to prove that the system in $\Ac, \Bc, \Cc$ has a
unique solution in $\cs[[t,z]]$ satisfying these dependence relations to conclude that
all our series $\Tc$ and 
$\Ucl$ counting orientations are algebraic. 

So assume that  $\Ac_\gw$, $\Bc_\gw$  and
$\Cc_\gw$ solve the system and satisfy
the required dependences. Then by setting $z=0$
in~\eqref{eqU-popescu}, we see that $\Cc_\gw$ must be the
specialization of $\Bc_\gw$ at $z=0$, for all $\gw$ valid of length
at most $2k-1$. Then the form of the system implies that the
coefficient of $t^n$ in all series can be computed by induction on
$n$, the initial values being $\Bc_\vareps=\Cc_\vareps=1+O(t)$ and
$\Ac_\gw=\Bc_\gw=\Cc_\gw=O(t)$ for $\gw$ non-empty (recall that we
have set $\Ac_\vareps=1$). This proves the uniqueness of the solution
(with the required dependences) and concludes the proof.
\end{proof}

\subsection{Back to examples}
\label{sec:quadratic}
\subsubsection{The case  $\boldsymbol{k=1}$.}
\label{subsec:quadratic1}
Let us go back to  System~\eqref{u-ck=1}. In the first
  equation, replace  $\Ucun{0}$ by $(\Ucun{\vareps}-1)/2$
to obtain a
single equation involving only  $\Uc{\vareps}=\Ucl_\vareps(x)$ and
$\Ucun{\vareps}=\Ucl_\vareps(1)$. For simplicity, we now drop the index
$\varepsilon$. This equation reads:
$$
\Pol(\Ucl(x), \Ucl(1), t, x)=0,
$$
with
$$
\Pol(x_0, x_1, t, x)= 
(x-1)(-x_0+1+2txx_0 ^2 + tx (x_1-1)) + {tx}
(x_0 -x x_1) +tx(x-1).
$$
We apply Brown's \emm quadratic method,. Its principle is the
  following: if there exists a \fps\ $X\equiv X(t)$ such that 
\beq\label{eq-der}
\Pol_{x_0}( U(X), U(1),t,X)=0 ,
\eeq
then this series $X$ must be a double root of the discriminant
$\Delta(\Ucl(1),t,x)$ of
$\Pol(x_0, \Ucl(1), t,x)$ with respect to $x_0$ (the notation
$\Pol_{x_0}$ stands for the derivative of $\Pol$ with respect to its
first variable). The proof is elementary
(see~\cite[Sec.~2.9]{goulden-jackson} or \cite{mbm-jehanne}). Equation~\eqref{eq-der}
reads 
$$
X=1+tX+4tX(X-1)\Ucl(X),
$$
and has a unique power series solution $X(t)$, whose coefficients can
be computed by induction from those of $\Ucl(x)$ (we do not need to
determine $X$, just to know that it exists). Thus $X$ is a double root
of $\Delta(\Ucl(1),t,x)$, and hence the discriminant in $x$ of $\Delta$
must vanish. This gives the following cubic
equation for $\Ucl(1)$ (see our {\sc Maple} sessions): 
\beq\label{cubic}
64 t^3 \Ucl(1)^3+2 t (24 t^2-36 t+1) \Ucl(1)^2+(-15 t^3+9 t^2+19 t-1)
\Ucl(1)+t^3+27 t^2-19 t+1=0.
\eeq
The series $\Ucl(1)$ has a unique positive singularity $\tau_1$, around
$0.0765$, which is a root of $216t^3-81t^2+18t-1$.  This gives the upper bound $\mu_1=1/\tau_1=13.0659\cdots$ on the
growth rate of Eulerian orientations. Expanding the series near
$\tau_1$ (using for instance the {\sc Maple} function {\tt algeqtoseries}~\cite{gfun}) shows
that it has a  singularity in $(1-\mu_1 t)^{3/2}$, as the \gf\ of
many families of planar maps.

\subsubsection{The case  $\boldsymbol{k=2}$.}
\label{sec:sol-u-k2-c}
 We now return to the
system~\eqref{u-c-k=2-gros}.  Observe that we can reduce it to a system
of three equations defining the series $\Uc{\vareps}, \Uc{10}$ and
$\Uc{100}$: 
\beq
\label{u-c-k=2}
\begin{cases}%
\Uc{\vareps}\!\!\!\!& = \ 1+  4tx\Uc{\vareps}\Uc{0}+2tx\Uc{\vareps}+
2t x\Ucun{0}+2tx^2(\Ucun{100}+\Ucun{10}) + \frac{2tx}{x-1} (
\Uc{0}-x^2\Ucun{0}) ,
\\
\Uc{10}\!\!\!\!& = \   x t(1+\Ucun{0})+2tx \Uc{\vareps}\Uc{10} +
{tx\Uc{0}}+tx^2\Ucun{10}+ \frac{tx}{x-1}(
\Uc{10}-x^2\Ucun{10}),
\\
\Uc{100}\!\!\!\!& = \  2tx \Uc{\vareps} \Uc{100} + tx\Uc{10}+tx^2\Ucun{100} + \frac{tx}{x-1}( \Uc{100}-x^2\Ucun{100}),
\end{cases}
\eeq
in which we still need to plug
\beq\label{specialization}
\Ucun{0}= \frac{\Ucun{\vareps}-1}{2} \qquad \hbox{and} \qquad\Uc{0}= \frac{\Uc{\vareps}-1}{2}.
\eeq

To solve this system, we could develop a matrix analogue of the
quadratic method, where~\eqref{eq-der} would be replaced by the cancellation
of the Jacobian of the system. However, we prefer a step by step
approach here, among other reasons because our system is
  not generic (its  Jacobian has a multiple root).

From now on, we lighten notation by denoting $A=\Uc{\vareps}$, $A_1=\Ucun{\vareps}$,
$B=\Uc{10}$, $B_1=\Ucun{10}$, $C=\Uc{100}$ and $C_1=\Ucun{100}$.
We
 will determine three polynomial equations relating the one-variable
series $A_1, B_1$ and $C_1$, and then
eliminate $B_1$ and $C_1$ to obtain a polynomial
equation satisfied by $A_1=\Ucun{\vareps}$. 

We now describe the various steps  of our calculation, without giving the
intermediate equations: we refer to our web pages
 for
a Maple session where the calculations are performed. 

The first equation of~\eqref{u-c-k=2}, after
injecting~\eqref{specialization}, involves only one $x$-dependent
series, namely $A=\Uc{\vareps}=\Ucl_\vareps(x)$.  Once the denominators are cleared
out, the degree in $A$ is 2, and we can apply  the quadratic
method of Section~\ref{subsec:quadratic1}: the discriminant (in $x$) of a certain
discriminant (in $x_0$) vanishes, and this gives a first equation
between $A_1, B_1$ and $C_1$.

We then move to the second equation of~\eqref{u-c-k=2}, which (after
injecting~\eqref{specialization}) involves two
$x$-dependent series, namely $A$ and $B$. It
is linear in the latter series, with coefficient:
\beq\label{coeff1}
1-x+tx+2tx(x-1)A.
\eeq
This coefficient vanishes for a (unique) series in $t$, denoted
$X$, satisfying
$$
X=1+tX+2tX(X-1)A_2, \qquad \hbox{with } A_2:=\Ucl_\vareps(X).
$$
Replacing $x$ by $X$ in the second equation of~\eqref{u-c-k=2} gives
another equation between $X$ and 
$A_2$, from which we compute
\beq\label{algX}
2t(A_1-2B_1)X^2+(1-t-2tA_1)X=1,
\eeq
$$
A_2= \frac{ 2X B_1}{X-1}-A_1.
$$
We now eliminate $X$ and $A_2$ between  the last two identities and the first equation of~\eqref{u-c-k=2}, specialized
at $x=X$. This gives a second equation between our three main
unknown series $A_1, B_1$ and $C_1$.

We finally consider the third equation of~\eqref{u-c-k=2} (after
injecting~\eqref{specialization}), which now
involves all three $x$-dependent series. It is linear in $C$,
again with coefficient~\eqref{coeff1}. Setting $x=X$ in this equation
gives an expression of $\Ucl_{10}(X)$:
$$
B_2:=\Ucl_{10}(X)= X C_1/ (X-1).
$$
 We now get back to the
second equation of~\eqref{u-c-k=2}, differentiate it with respect to
$x$ and set $x=X$. Replacing $B_2$ and $A_2$  by the above
expressions gives:
$$
A_2':=\frac{\partial \Ucl_\vareps}{\partial x}(X)=
2\frac{2C_1t(2X-1)(X-1)A_1-4Xt(2X-1)B_1C_1-B_1t(X-1)-(t-1)(X-1)C_1}
{t(X-1)^2(4C_1X+X-1)}.
$$
It remains to differentiate the first equation of~\eqref{u-c-k=2} with respect to $x$,
specialize it at $x=X$, and plug  the above values of
$A_2', B_2$ and $A_2$ to obtain one more equation between $A_1, B_1, C_1$
and $X$. Eliminating $X$ thanks to~\eqref{algX} gives 
our third and last equation between $A_1, B_1$ and $ C_1$.

From this system, we eliminate $B_1$
and $C_1$, and obtain an equation of degree 27 for
$A_1=\Ucun{\vareps}$. Its dominant
coefficient does not vanish away from 0, and its discriminant has three
roots in $[1/10, 1/16]$ (where we know that the radius must be found),
respectively located 
around 0.07509, $0.07658$ and 0.07727. Following numerically the
 branches that start from $1$ at $t=0$ shows that the radius of $\Ucun{\vareps}$ is the
second one, giving the upper bound $\mu_2=13.057\ldots$ on
the growth rate $\mu$ of Eulerian orientations.  From numerical
estimates of the singular exponent, we predict that the series has
again a ``planar map'' singularity in $(1-\mu_2t)^{3/2}$. This is
known to hold for many series satisfying an equation with divided
differences~\cite{drmota-noy-universal}. This leads us to complete
Proposition~\ref{prop:alg-c} as follows.

\begin{conjecture}\label{conj:sup-c}
  For every $k$, the  algebraic series $\Ucl_\vareps^{(k)}(t;1)$ that counts orientations of
  $\peop$ has a unique dominant singularity $\tau_k=1/\mu_k$ which is of the
  planar map type: as $t$ approaches $\tau_k$ from below,
$$
\Ucl_\vareps^{(k)}(t;1)=c_0+c_1 (1- \mu_k t) + c_2 (1-\mu_kt)^{3/2}
\big(1+o(1)\big)
$$
with $c_2 \not = 0$.
\end{conjecture}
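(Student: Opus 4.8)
The plan is to reduce the conjecture to a statement about the dominant singularity of an \emph{algebraic} function, since algebraicity of $\Ucl_\vareps^{(k)}(t;x)$ (hence of $\Ucl_\vareps^{(k)}(t;1)$) is already granted by Proposition~\ref{prop:alg-c}. Two things then remain: that the dominant singularity $\tau_k$ is unique (no other singularity of the same modulus), and that the local behaviour there is of planar-map type $(1-\mu_k t)^{3/2}$. The structural reason to expect the $3/2$-exponent is that the system of Proposition~\ref{def-syst-u-c} is a \emph{polynomial system with one catalytic variable}~$x$: the series $\Tc_\gw$ and the specialisations $\Ucl_\gw^{(k)}(t;1)$ are genuine one-variable series, and the only occurrence of $x$ that is not polynomial is the divided difference $\bigl(\Uc{\gw}-\Ucun{\gw}\bigr)/(x-1)$ in~\eqref{Uw-c-bis}. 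Such systems are exactly those for which Drmota and Noy~\cite{drmota-noy-universal}, building on the kernel/quadratic method~\cite{mbm-jehanne,goulden-jackson}, established a universal $n^{-5/2}$ asymptotics under positivity and non-degeneracy hypotheses. So the first step of the plan is to put our system into their normal form and check those hypotheses, the positivity and irreducibility parts being handled exactly as in the proofs of Proposition~\ref{lem:prop_systm_prev} and of the analogous result in Section~\ref{sec:asympt-p}.

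Concretely I would proceed as follows. First, I would eliminate all $\Tc$-series through~\eqref{Tw-c} and all specialisations $\Ucl_\gw^{(k)}(t;1)$ with $\gw\ne\vareps$, so as to reach a finite vector functional equation
\[
\mathbf{B}(x) = \Phi\!\left(\mathbf{B}(x),\,\mathbf{B}(1),\,t,\,x,\,\frac{\mathbf{B}(x)-\mathbf{B}(1)}{x-1}\right),
\]
where $\mathbf{B}$ collects $\Ucl_\vareps^{(k)}(t;x)$ and the few other $x$-dependent series that survive the reduction (for $k=2$ these are $\Uc{10}$ and $\Uc{100}$, as in~\eqref{u-c-k=2}). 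Second, I would apply the kernel method: the Jacobian of $\Phi$ with respect to the divided-difference slot degenerates along a curve $x=X(t)$ which, exactly as in Section~\ref{sec:quadratic}, is a well-defined power series in $t$ whose coefficients are computed by induction; substituting $x=X(t)$ --- and, when the degeneracy forces it, also substituting into one $x$-derivative of the system, as was done by hand for $k=2$ --- should produce a finite \emph{positive} polynomial system for $X(t)$, the one-variable unknown $A_1:=\Ucl_\vareps^{(k)}(t;1)$, and a handful of auxiliary one-variable series. Third, on this reduced system I would run the positivity/aperiodicity/irreducibility bookkeeping of Section~\ref{sec:asympt-c} together with the Drmota--Noy non-degeneracy condition (the controlling discriminant has a \emph{simple} zero at $\tau_k$ and the relevant leading coefficient does not vanish there), and conclude that $A_1$ has a unique dominant singularity $\tau_k=1/\mu_k$ with $A_1=c_0+c_1(1-\mu_k t)+c_2(1-\mu_k t)^{3/2}(1+o(1))$, $c_2\ne0$.

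The hard part will be the third step, carried out \emph{uniformly in $k$}. The quadratic-method arguments of~\cite{mbm-jehanne,goulden-jackson} and the Drmota--Noy theorem are normally phrased for a \emph{single} equation with one catalytic variable, whereas here the system has size growing like $\binom{2k}{k}$ and its kernel is not generic --- the authors already observe for $k=2$ that the Jacobian has a multiple root. One therefore has to (a) develop a genuinely multivariate kernel method robust to such degeneracies, and (b) prove, without solving anything, that the branch of the resulting algebraic curve singled out by the combinatorial series (nonnegative coefficients, and aperiodic because the $t$- and $t^2$-coefficients of $\Ucl_\vareps^{(k)}(t;1)$ are $2$ and $10$) meets its controlling discriminant transversally at the smallest positive zero. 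For (b) the natural device is a Perron--Frobenius argument on the differentiated reduced system: show that its Jacobian first becomes singular precisely at $\tau_k$, with a one-dimensional kernel spanned by a strictly positive vector, which forces a square-root singularity of $X(t)$ and of the auxiliary series, and then check that this square-root singularity is promoted to a $(1-\mu_k t)^{3/2}$ singularity of $A_1$ when transported back through the divided-difference elimination --- this last promotion being exactly the mechanism behind the $n^{-5/2}$ universality for planar maps. Uniqueness of the dominant singularity would then follow from aperiodicity as in Section~\ref{sec:asympt-c}.

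As a consistency check, the programme must reproduce the explicit outcomes of Section~\ref{sec:quadratic}, where the $(1-\mu_k t)^{3/2}$ behaviour is verified directly for $k=1,2$ from Equation~\eqref{cubic} and the degree-$27$ equation for $\Ucun{\vareps}$; I would not hope to prove the general conjecture by brute-force singularity analysis of those eliminated polynomials, whose degrees grow rapidly with $k$, so a structural (kernel-method plus positivity) argument seems unavoidable. The same strategy should apply verbatim to the superset family $\psup$ of the next section.
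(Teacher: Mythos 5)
The statement you are trying to prove is not proved in the paper: it is stated there as Conjecture~\ref{conj:sup-c}, supported only by an explicit verification for $k=1$ (from the cubic equation~\eqref{cubic}), and by numerical estimates of the singular exponent for $k=2,\dots,5$. Indeed, the authors list precisely the missing ingredient as an open question in their final section: a notion of \emph{positive irreducible system of polynomial equations with divided differences} whose solutions would systematically have a $(1-\mu t)^{3/2}$ singularity, noting that~\cite{drmota-noy-universal} achieves this only for a \emph{single} equation. Your proposal is a plausible research programme towards that open question, but it is not a proof: steps (a) and (b) of your third paragraph --- developing a multivariate kernel method robust to the degeneracies you yourself point out (the Jacobian already has a multiple root for $k=2$), and proving transversality of the combinatorial branch at the smallest positive zero of the controlling discriminant, uniformly in $k$ --- are exactly the content of the conjecture, and nothing in your write-up establishes them.

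Two of your intermediate claims are also weaker than you suggest. First, you say the positivity and irreducibility parts can be "handled exactly as in the proofs of Proposition~\ref{lem:prop_systm_prev}"; but that proposition concerns the \emph{subsets} $\peom$, whose systems are genuinely positive after the change of unknowns $\Lc_\gw\mapsto\Lc_\gw^+$, whereas the superset systems~\eqref{Tw-c},~\eqref{Uw-c-bis} contain irreducibly signed terms such as $-\tfrac{tx}{x-1}\,x^k\Ucun{\gw}$ and $-\tfrac{tx}{x-1}\sum \Tc_\gu(x^{|\gu|/2}-x^k)$, for which no analogous positivisation is given (nor obvious). Second, after the kernel-method elimination the resulting one-variable system is not positive either --- for $k=2$ the paper can only identify which root of the degree-$27$ discriminant is the true radius by numerically following branches --- so the Perron--Frobenius argument you invoke for uniqueness of the dominant singularity and for $c_2\neq 0$ has no foundation as stated. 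In short: what you have written matches the authors' own conjectural reasoning and their stated open problem, but it does not close the gap.
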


\section{Supersets of Eulerian orientations, via the  prime decomposition}
\label{sec:prime-upper}

In this section, we combine the illegal large splits of the
previous section with the prime decomposition of Section~\ref{sec:prime-dec} to obtain a new family of
supersets of Eulerian orientations. These new supersets $\psup$
satisfy $\psup\subset \peop$ (Proposition~\ref{prop:incl-u}), hence they give
better bounds on the growth rate $\mu$ than  those obtained from the standard decomposition. Many arguments are
similar to those of the previous section, and we give fewer details.

\medskip

Recall  from Section~\ref{sec:prime-dec} that an Eulerian orientation
is a sequence of {prime} Eulerian orientations, and that a prime (Eulerian) orientation 
 can be  obtained recursively from the atomic map by either:
\begin{itemize} 
\item adding a loop,  oriented in either way, around an orientation $O_1$,
\item or a legal split on a prime orientation $O' \in \PEO$, followed by
  the concatenation of an arbitrary Eulerian orientation $O''$ at the new
  vertex created by the split (Figure~\ref{fig:rec-prime}).
\end{itemize}

\begin{definition} \label{def:ub-prime}
Let $k\ge 1$. Let $\psup$ be the set of planar orientations obtained
recursively from the atomic map by either:
\begin{itemize} 
\item  concatenating a sequence of prime orientations of $\psup$,
\item or adding a loop,  oriented in either way, around an orientation
  $O_1$ of $\psup$, 
\item or performing a legal $i$-split on a prime orientation $O' \in
  \psup$, with $i\le k$, followed by
  the concatenation of an arbitrary  orientation $O''$ of $\psup$ at the new
  vertex created by the split (small split),
 \item or performing an arbitrary $i$-split on a prime orientation $O' \in
  \psup$, with $i> k$, followed by
  the concatenation of an arbitrary  orientation  $O''$of $\psup$ at the new
  vertex created by the split (large split). If the split is legal,
  then the new edge is given the only orientation that makes the
 root word balanced, otherwise the root edge is
  oriented away from the root vertex.
 \end{itemize}
\end{definition}

Again, the sets $\psup$ decrease to the set $\PEO$ of all Eulerian
orientations as $k$ increases, hence their growth rates $\bar \mu_k$
form a non-increasing sequence of {upper} bounds on $\mu$. We do not
know if this sequence converges to $\mu$. At any rate, the
  convergence appears to be rather slow, as shown by the estimates of
  $\bar \mu_k$ in Table~\ref{tab:general}.

\begin{prop}\label{prop:incl-u}
For $k\ge 1$, the superset of orientations $\psup$ is  contained in
the superset $\peop$ defined in
Section~\rm{\ref{sec:classic-upper}}.
\end{prop}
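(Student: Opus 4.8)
The plan is to mirror the proof of Proposition~\ref{prop:incl-l}, replacing the "subset" inclusions there by the "superset" inclusions needed here. That is, I would show by induction on the number of edges that every orientation of $\psup$ also belongs to $\peop$. The base case (no edges) is immediate, since the atomic map lies in both sets. For the inductive step, take $O\in\psup$ with at least one edge and examine which of the four constructions in Definition~\ref{def:ub-prime} produced~$O$.

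The first two cases are easy. If $O$ is a concatenation of prime orientations of $\psup$, write $O=P_1\cdots P_\ell$; each $P_i$, having fewer edges, lies in $\peop$ by the induction hypothesis, and one reconstructs $O$ inside $\peop$ by a sequence of merges in the sense of Definition~\ref{def:ub-classic} (adding no new edges of its own beyond those already present), exactly as in the proof of Proposition~\ref{prop:incl-l}: each prime component of a $\peop$-orientation is again a $\peop$-orientation, so one attaches the $P_i$ one after another. If $O$ is obtained by adding an oriented loop around some $O_1\in\psup$, then $O_1\in\peop$ by induction and the merge construction of Definition~\ref{def:ub-classic} (merging $O_1$ with the atomic map, orienting the loop the same way) places $O$ in $\peop$.

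The two split cases are the crux. Suppose $O$ is obtained from a prime orientation $O'=P_1\cdots P_\ell\in\psup$ by an $i$-split performed in the last prime component $P_\ell$ (so that $P_1,\ldots,P_{\ell-1}$ get reattached at the newly created vertex $v'$), followed by concatenating some $O''\in\psup$ at $v'$. By induction, $O'$, its prime components $P_1,\ldots,P_\ell$, and $O''$ all lie in $\peop$. Let $O_1$ be the orientation obtained from $O'$ by the same $i$-split — this is a legitimate $i$-split of the $\peop$-orientation $O'$, and it is a \emph{small} split if $i\le k$ and a \emph{large} split if $i>k$; in the large case one must check that the orientation rule for the new edge in Definition~\ref{def:ub-classic} (oriented away from the root vertex when the split is illegal; the forced orientation when legal) matches the rule in Definition~\ref{def:ub-prime}, which it does by construction since the rules are stated identically. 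Hence $O_1\in\peop$. It then remains to concatenate $P_1,\ldots,P_{\ell-1}$ and $O''$ at the new vertex; as in the first case above, each such concatenation is a merge in $\peop$ and preserves membership. This recovers $O\in\peop$, completing the induction.

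The main obstacle — and the point that needs care rather than cleverness — is verifying that the orientation of the new edge produced by a large split agrees in the two definitions, and that the "prime, attach an arbitrary orientation at $v'$" mechanism of the prime decomposition really is a composite of a single split followed by merges in the standard-decomposition world. Both are straightforward once one unwinds the definitions, but they are the places where the two recursive descriptions could in principle diverge, so the proof should spell them out. Everything else is a routine transcription of the argument already given for Proposition~\ref{prop:incl-l}.
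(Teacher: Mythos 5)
Your proposal has a genuine gap: it relies on an operation that $\peop$ does not possess. In Definition~\ref{def:ub-classic}, the only constructions available in $\peop$ are the merge (which always \emph{adds a loop edge} around $O_1$) and the splits (which always add a root edge). There is no edge-free ``concatenation at the root vertex'' operation, and certainly no operation that attaches an orientation at the newly created vertex $v'$ after a split has been performed. Your parenthetical ``adding no new edges of its own beyond those already present'' misreads the merge: merging $P_1,\ldots,P_\ell$ would introduce $\ell-1$ spurious loops. So both your first case (reassembling $O=P_1\cdots P_\ell$ ``by a sequence of merges'') and your split case (``it then remains to concatenate $P_1,\ldots,P_{\ell-1}$ and $O''$ at the new vertex'') invoke steps that are not legal moves in the grammar defining $\peop$, and closure of $\peop$ under concatenation is not something you may assume — it is essentially what needs proving.

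The paper's proof avoids this by reversing the order of operations: it performs all concatenations first, \emph{inside} $\psup$ (where concatenation is a basic construction), obtaining an orientation $\tilde O$ with one fewer edge than $O$; the induction hypothesis then puts $\tilde O$ in $\peop$, and $O$ is recovered from $\tilde O$ by a \emph{single} edge-adding step of $\peop$ (a merge or a split). This reordering exposes a second point you miss: when $O=P\cdot O_2$ with $P$ obtained by an $i$-split, the split recovering $O$ from $\tilde O$ is a $j$-split with $j=i+\dv(O_2)$, not an $i$-split. One must then check that a small legal split can safely become a large (hence arbitrary) split, while the converse cannot occur — this is exactly the asymmetry that makes the inclusion go in the direction $\psup\subset\peop$ and not the other way. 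Your argument, by keeping the split index fixed at $i$, never confronts this.
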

\begin{proof}
  We prove this by induction on the number of edges. The inclusion is
  obvious for orientations with no edges. Now let $O\in \psup$, having
  at least one edge. 

If $O$ is prime and is obtained by adding a loop around 
a smaller orientation $O_1$ of $\psup$  (second construction in
Definition~\ref{def:ub-prime}), then $O_1$ belongs to $\peop$ by the induction
hypothesis, and so does $O$ (first construction in
Definition~\ref{def:ub-classic}). 

Assume now that $O$ is prime and is obtained by an $i$-split in a prime
orientation $O'$ of $\psup$, followed by the concatenation of an
orientation $O''$ of
$\psup$ at the new vertex (third or fourth construction in
Definition~\ref{def:ub-prime}). Then the orientation $\tilde O$ obtained by concatenating
$O'$ and $O''$ at their root belongs to $\psup$ (first construction in
$\psup$) and hence to $\peop$
by the induction hypothesis. But then one can recover $O$ by
performing an $i$-split in $\tilde O$, which is allowed in $\tilde O$ as it was
allowed in $O'$. This is the second construction in
Definition~\ref{def:ub-classic}, hence $O$ is in $\peop$.

Assume finally that $O$ is obtained by concatenating a prime
orientation $P$ of $\psup$ and another orientation $O_2$ of $\psup$ (first construction in
Definition~\ref{def:ub-prime}). By the induction hypothesis, both $P$
and $O_2$ are in $\peop$. If the root edge of $P$ is a loop, deleting
it from $P$ leaves an orientation $O_1$ which is in $\peop$. Then we can reconstruct $O$
by a merge of $O_1$ and $O_2$ as in the first construction of
Definition~\ref{def:ub-classic}. If the root edge of $P$ is not a
loop (Figure~\ref{fig:prop_20}), then $P$ was obtained by the third or fourth construction in
Definition~\ref{def:ub-prime}: allowed split in a prime orientation $P'$ of
$\psup$, followed by the concatenation of some $O''\in \psup$ at the new
vertex. Let $\tilde O$ be obtained by concatenating $O''$, $P'$ and $O_2$
(in counterclockwise order) at their roots. Then $\tilde O$ is in
$\psup$, but also in $\peop$ by the induction hypothesis. Then $O$ can be recovered
by a split in $\tilde O$, which is allowed in $\tilde O$ as it was allowed in
$P'$ (the split may have been small in $P'$ and become large in
$\tilde O$,
because of the orientation $O_2$, but the converse is not
possible). This is the second construction in
Definition~\ref{def:ub-classic}, hence $O$ is in $\peop$. 
\end{proof}

\begin{figure}[h]
\centering
\includegraphics[scale=0.8]{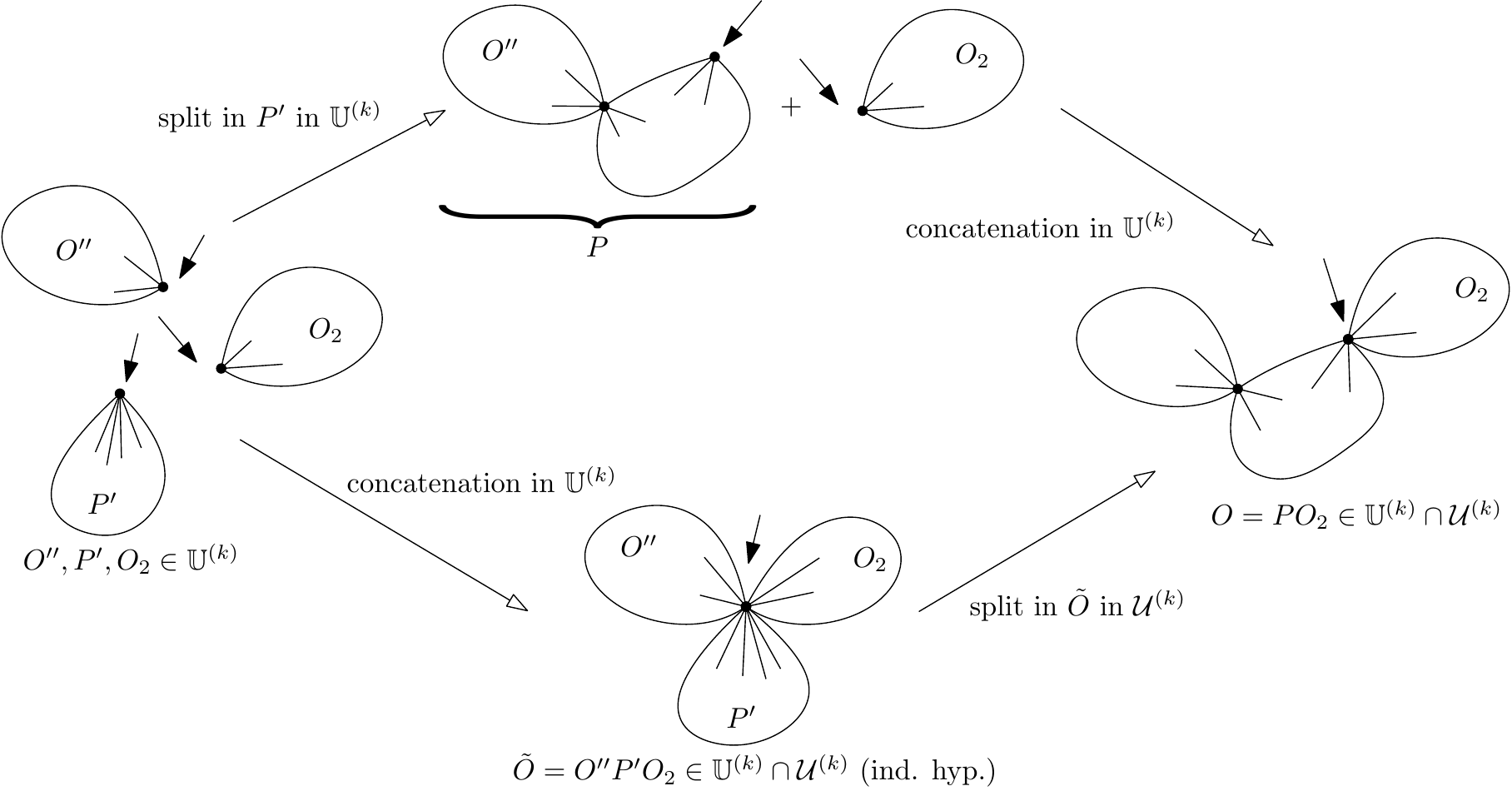}
\caption{Two constructions of the orientation $O$: (top) in $\psup$, via an $i$-split, (bottom) in $\peop$, via a $j$-split, with $j=i+\dv(O_2)$.}
\label{fig:prop_20}
\end{figure}

\subsection{Functional equations for $\boldsymbol\psup$}
We now fix an integer $k$.  For a word $\gw$ on $\{0,1\}$, let
$\Upl_\gw^{(k)}(t;x) \equiv \Upl_\gw(x)$ denote the generating function of
orientations of $\psup$ whose root word ends with $\gw$, counted by
the edge number (variable $t$) and the half-degree of the root vertex
(variable $x$). Let  $\Tp_\gw^{(k)}(t) \equiv \Tp_\gw$ denote the
generating function of orientations of $\psup$ having root word
exactly $\gw$. We define analogous \gfs\ $\Uppl_\gw(x)$ and $\Tpp_\gw$ for
prime orientations. As in the previous section, we often
  denote simply by $\Upun{\gw}$ (resp.~$\Uppun{\gw}$) the edge
  \gf\ $\Upl_\gw(t;1)$ (resp.~$\Uppl_\gw(t;1)$), and by $\Up{\gw}$ (resp.~$\Upp{\gw}$) the refined
  \gf\ $\Upl_\gw(t;x)$ (resp.~$\Uppl_\gw(t;x)$).

Note that $\Tp_\gw=\Tpp_\gw=0$ if $\gw$ is not balanced and that
$\Tp_\vareps=1$, $\Tpp_\vareps=0$.  For $\gw$ balanced of length between $2$ and
$2k$, we have both a sequential equation
\beq\label{Tp}
\Tp_\gw=\sum_{\gw=\gu\gv} \Tp_\gu \Tpp_\gv
\eeq
analogous to~\eqref{F-l-p}, and an equation for prime orientations:
\beq\label{Tpp}
\Tpp_\gw=t \Tp_{\gw_c} + t\Upun{\vareps} \Uppun{\gw_s},
   \eeq
analogous to~\eqref{Fp-l-p}. The factor $\Upun{\vareps}$ accounts for
the orientation concatenated after a split.

For $\gw$ valid of length at most $2k-1$, we have a sequential
equation, analogous to~\eqref{L-l-p} but taking care of the root degree:
\beq\label{Up}
\Up{\gw} = \mathbbm{1}_{\gw=\vareps} + \Up{\vareps} \Upp{\gw}+
\sum_{\gw= \gu\gv, \gv \not = \gw} \Up{\gu} x^{|\gv|/2}
\Tpp_\gv.
\eeq
Finally, we have the following equation for prime orientations, which
is the counterpart of~\eqref{Lpp-l-p} and involves ingredients
of~\eqref{Uw-c} for orientations obtained by a split:
\begin{multline}\label{Upp}
\Upp{\gw}=  
2tx \Up{\vareps}\mathbbm{1}_{\gw =\vareps}+ tx
\Up{\gw_p}\mathbbm{1}_{\gw\not = \vareps}+ tx^{|\gw|/2}
\Tp_{\gw_c} \mathbbm{1}_{\gw\not = \vareps \hbox{ balanced}}\\
+t \Upun{\vareps} \left(
\sum\limits_{\substack{\gu=\gv\gw 
 \\ 2\le|\gu| \le 2k\\ \gu \text{ balanced } }} x^{|\gu|/2}
\Uppun{\gu_s}\right.
\left. +\frac{x}{x-1} \left( \Upp{\gw}-x^k\Uppun{\gw}\right)-\frac{x}{x-1} \sum_{\substack{\gu=\gv\gw\\ |\gu| \le 2k-2}}
 \Tpp_\gu (x^{|\gu|/2}-x^k)\right).
\end{multline}
The first line counts orientations obtained by adding a loop, and the
second those obtained by a split.

\begin{remark}
  As in the previous section,  we can use~\eqref{Tpp} to
  replace~\eqref{Upp} by a   slightly lighter equation: 
\begin{multline}\label{Upp-light}
\Upp{\gw}=  x^{|\gw|/2}\Tpp_\gw+
2tx \Up{\vareps}\mathbbm{1}_{\gw =\vareps}+ tx \Up{\gw_p}\mathbbm{1}_{\gw \not=\vareps}
+t \Upun{\vareps} \left(
\sum\limits_{\substack{\gu=\gv\gw 
 \\ |\gu|\leq 2k-1\\ \gu \text{ quasi-balanced } } } x^{(1+|\gu|)/2}
\Uppun{\gu}
\right.\\ \left. 
+\frac{x}{x-1} \left( \Upp{\gw}-x^k\Uppun{\gw}\right)-\frac{x}{x-1} \sum_{\substack{\gu=\gv\gw\\|\gu| \le 2k-2}}
 \Tpp_\gu (x^{|\gu|/2}-x^k))\right).
\end{multline}
\noindent
\end{remark}

\begin{prop}\label{def-syst-u-p}
  Consider the collection of equations consisting of:
  \begin{itemize}
  \item Equation~\eqref{Tp}, written for all balanced words $\gw$
    of length between $2$ and $2k-4$,
 \item Equation~\eqref{Tpp}, written for all balanced words $\gw$
    of length between $2$ and $2k-2$,
\item Equation~\eqref{Up}, written for all  valid words $\gw$
of  length at most $ 2k-2$,
\item Equation~\eqref{Upp-light}, written for all  valid words $\gw$
of  length at most $ 2k-1$.
  \end{itemize}
In this collection, replace all trivial $\Tp$- and $\Tpp$-series by their value:
$\Tp_\gw=\Tpp_\gw=0$ when $\gw$ is not balanced, $\Tp_\vareps=1$,
$\Tpp_\vareps=0$. Let $\sf R_0$
denote the resulting system. The number of
series it involves is $2\e(k)-3{2k\choose k}- {2k-2\choose k-1}
\mathbbm{1}_{k>1 }$, where $\e(k)$ is given
by~\eqref{ek-def}. Moreover,  $\sf R_0$ defines uniquely all these series. Its
size can be (roughly) divided by two upon exploiting the
$0/1$ symmetry.
\end{prop}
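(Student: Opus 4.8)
The plan is to argue as in the proofs of Propositions~\ref{prop:l-c}, \ref{def-syst-l-p} and~\ref{def-syst-u-c}: Proposition~\ref{def-syst-l-p} is the analogue of the present statement for subsets, and Proposition~\ref{def-syst-u-c} its analogue for the standard (non-prime) decomposition. The first thing I would check is that $\sf R_0$ really determines all the series it involves, which amounts to verifying that every series appearing on a right-hand side of $\sf R_0$ occurs as a left-hand side of another equation. As in Proposition~\ref{prop:l-c} this comes down to matching the length thresholds of the four equation families (recall that any factor of a valid word is valid): in~\eqref{Tpp} the central factor $\gw_c$ has length $|\gw|-2\le 2k-4$, so $\Tp_{\gw_c}$ is supplied by~\eqref{Tp}; in~\eqref{Upp-light} the only non-prime series occurring are $\Up{\vareps}$ and the $\Up{\gw_p}$, of valid index of length $\le 2k-2$, supplied by~\eqref{Up}, together with the $\Tpp_\gu$, which vanish unless $\gu$ is balanced --- hence of even length $\le 2k-2$, supplied by~\eqref{Tpp}; and $\Tp_\gw$ (resp.\ $\Tpp_\gw$) is only ever needed for balanced $\gw$ of length $\le 2k-4$ (resp.\ $\le 2k-2$), and $\Upp{\gw_s}$ for $\gw_s$ quasi-balanced of length $\le 2k-3$. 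These are exactly the ranges over which the four equations are written.

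Uniqueness then follows from a $t$-grading, exactly as in the proofs cited. The equations for prime series,~\eqref{Tpp} and~\eqref{Upp-light}, carry an explicit factor $t$ on the right (the divided differences being read, as always, as operators returning polynomials in $x$: the numerators $\Upp{\gw}-x^k\Uppun{\gw}$ and $\Tpp_\gu(x^{|\gu|/2}-x^k)$ vanish at $x=1$); and in the sequential equations~\eqref{Tp} and~\eqref{Up} every summand other than $\mathbbm{1}_{\gw=\vareps}$ carries a prime series, hence also a factor $t$. Consequently the coefficient of $t^n$ in every series of $\sf R_0$ is determined by induction on $n$, treating at each level the prime series before the sequential ones, with initial data $\Up{\vareps}=1+O(t)$ and $\Tp_\gw=\Tpp_\gw=\Upp{\gw}=O(t)$, $\Up{\gw}=O(t)$ for $\gw\neq\vareps$.

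For the count, I would sort the series into their four families and add up. There are $\binom{2i}{i}$ balanced words of length $2i$, so $\Tp_\gw$ (balanced, non-empty, length $\le 2k-4$) contributes $\sum_{i=1}^{k-2}\binom{2i}{i}$ (an empty sum for $k\le 2$) and $\Tpp_\gw$ (balanced, non-empty, length $\le 2k-2$) contributes $\sum_{i=1}^{k-1}\binom{2i}{i}$. From the enumeration carried out in the proof of Proposition~\ref{prop:l-c}, the number of valid words of length $\le 2k-1$ is $\e(k)-\sum_{i=1}^k\binom{2i}{i}$; moreover a valid word of length exactly $2k-1$ must be quasi-balanced (odd length forces balance $\ge 1$, and validity forces balance $\le 1$), so there are $\binom{2k-1}{k}+\binom{2k-1}{k-1}=\binom{2k}{k}$ of them, leaving $\e(k)-\sum_{i=1}^k\binom{2i}{i}-\binom{2k}{k}$ valid words of length $\le 2k-2$; these account for $\Upp{\gw}$ and $\Up{\gw}$ respectively. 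Summing the four contributions and using $2\sum_{i=1}^k\binom{2i}{i}=2\binom{2k}{k}+2\sum_{i=1}^{k-1}\binom{2i}{i}$, the total reduces to $2\e(k)-3\binom{2k}{k}$ minus $\sum_{i=1}^{k-1}\binom{2i}{i}-\sum_{i=1}^{k-2}\binom{2i}{i}$, which equals $\binom{2(k-1)}{k-1}$ when $k\ge 2$ and $0$ when $k=1$, that is $\binom{2k-2}{k-1}\,\mathbbm{1}_{k>1}$; this is the announced number of series. I would also check directly that it gives $4$ series for $k=1$ and $22$ for $k=2$.

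Finally, the $0/1$ symmetry leaves each of $\Tp_\gw,\Tpp_\gw,\Up{\gw},\Upp{\gw}$ unchanged under the complementation $\gw\mapsto\bar\gw$, exactly as exploited in Propositions~\ref{prop:l-c} and~\ref{def-syst-u-c}; since $\vareps$ is the only self-complementary index, identifying each series with its complement collapses $\sf R_0$ to roughly half its size. The one genuinely fiddly step is the count: one must juggle four families against three distinct length thresholds, feed in the closed form~\eqref{ek-def} for $\e(k)$, and track the small-$k$ degeneracies responsible for the indicator $\mathbbm{1}_{k>1}$. The well-definedness, by contrast, is routine --- line-for-line parallel to Proposition~\ref{prop:l-c} --- once the length bounds on the four equation families are seen to close up.
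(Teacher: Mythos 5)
Your proposal is correct and follows exactly the route the paper intends: the paper's own proof is a one-line reference to the arguments of Propositions~\ref{prop:l-c} and~\ref{def-syst-l-p} (well-definedness via closure of the equation families and the $t$-grading, with prime series computed before sequential ones at each order; the count via the enumeration of balanced, quasi-balanced and valid words already done for $\e(k)$; the $0/1$ symmetry). Your bookkeeping of the four length thresholds and the reduction to $2\e(k)-3\binom{2k}{k}-\binom{2k-2}{k-1}\mathbbm{1}_{k>1}$ checks out, including the $k=1$ and $k=2$ sanity checks against the displayed systems.
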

The proof is similar to the proofs of  Propositions~\ref{prop:l-c}
and~\ref{def-syst-l-p}. 

\begin{remark}
   As always,  we can alternatively write forward equations:
$$
\Up{\gw}= x^{|\gw|/2} \Tp_\gw + \Up{0\gw}+ \Up{1\gw}, \qquad 
\Upp{\gw}=   x^{|\gw|/2}\Tpp_\gw + \Upp{0\gw}+ \Upp{1\gw}.
$$
\end{remark}
\subsection{Examples}

\subsubsection{When  $\boldsymbol{k=1}$,}
 the system of
Proposition~\ref{def-syst-u-p} contains $2\e(1)-3\cdot 2=4$ equations. Upon
exploiting the 0/1 symmetry, it reads:
\beq
\label{u-pk=1}
\begin{cases}%
\Up{\vareps}\!\!\!\!& = \ 1+\Up{\vareps} \Upp{\vareps},
\\
\Upp{\vareps}\!\!\!\!& = \ 2tx\Up{\vareps} + t \Upun{\vareps} \left( 2x
  \Uppun{0}
+ \frac{x}{x-1}(\Upp{\vareps} -x \Uppun{\vareps}) \right),
\\
\Upp{0}\!\!\!\!& = \  tx\Up{\vareps} + t \Upun{\vareps} \left( x
  \Uppun{0}
+ \frac{x}{x-1}(\Upp{0} -x \Uppun{0}) \right).
\end{cases}
\eeq
The second equation can be replaced by the forward equation
$\Upp{\vareps}=2\Upp{0}$.

We solve this system in Section~\ref{sec:sol-up}.

\subsubsection{When  $\boldsymbol{k=2}$,}
 the system of
Proposition~\ref{def-syst-u-p} contains $2\e(2)-3\cdot 6 -2=22$ equations. Upon
exploiting  the 0/1
symmetry and the forward equations, it reads: 
\beq
\label{u-p-k=2}
\begin{cases}%
\Tpp_{01} \!\!\!\!& = \  t +t \Upun{\vareps} \Uppun{1}, 
\\
\Up{\vareps} \!\!\!\!& = \  1+ \Up{\vareps} \Upp{\vareps},
\\
\Up{0}\!\!\!\!& = \ \Up{\vareps} \Upp{0},
\\
 \Up{10}= \Up{01}\!\!\!\!& = \  \Up{\vareps} \Upp{10},
\\
 \Up{00}= \Up{11}\!\!\!\!& = \  \Up{\vareps} \Upp{00} ,
\\
\Upp{\vareps}\!\!\!\!& = \ 2\Upp{0},
\\
   \Upp{0}= \Upp{1}
\!\!\!\!& = \  tx\Up{\vareps} +t \Upun{\vareps} \left(x \Uppun{0}+
x^2(\Uppun{100}+\Uppun{010}+\Uppun{110}) \right.
\left.
  + \frac{x}{x-1} ( \Upp{0}-x^2\Uppun{0}) +x^2\Tpp_{10}\right),
\\
\Upp{10}=\Upp{01} \!\!\!\!& = \  x\Tpp_{10}+\Upp{110}+\Upp{010},
\\
\!\!\!\!& = \  x \Tpp_{10}+tx \Up{1} + t\Upun{\vareps} \left( x^2\Uppun{010}+
  x^2\Uppun{110} + \frac{x}{x-1}(
\Upp{10}-x^2\Uppun{10}) + x^2\Tpp_{10}\right),
\\
 \Upp{00}\!\!\!\!& = \  tx\Up{0}+t\Upun{\vareps} \left(x^2\Uppun{100} + \frac{x}{x-1}( \Upp{00}-x^2\Uppun{00})\right),
\\
 \Upp{100}\!\!\!\!& = \  tx\Up{10}+t \Upun{\vareps} \left(
x^2\Uppun{100} + \frac{x}{x-1}( \Upp{100}-x^2\Uppun{100})\right),
\\
 \Upp{010}\!\!\!\!& = \   tx \Up{01} +t \Upun{\vareps} \left(
x^2\Uppun{010} + \frac{x}{x-1}( \Upp{010}-x^2\Uppun{010})\right),
\\
 \Upp{110}\!\!\!\!& = \  tx\Up{11} +t \Upun{\vareps} \left(
x^2\Uppun{110} + \frac{x}{x-1}( \Upp{110}-x^2\Uppun{110})\right).
\end{cases}
\eeq
We explain in Section~\ref{sec:sol-up} below how to solve this
system.

\subsection{Algebraicity}
\label{sec:alg-p}
The analogue of Proposition~\ref{prop:alg-c} holds for the supersets
obtained via the prime decomposition.
\begin{prop}\label{prop:alg-p}
  For any $k\ge 1$, the \gf\ $\Upl_\vareps^{(k)}(t;x)$ that counts orientations
  of $\psup$ is algebraic.
\end{prop}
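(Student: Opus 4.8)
The plan is to argue exactly as in the proof of Proposition~\ref{prop:alg-c}, now feeding Popescu's theorem (Theorem~\ref{thm:popescu}) with the system $\sf R_0$ of Proposition~\ref{def-syst-u-p} in place of the system $R_0$ of Proposition~\ref{def-syst-u-c}. First I would take as variables $z_1 = t$ and $z_2 = z := x - 1$, and introduce, for each word $\gw$ occurring in $\sf R_0$, indeterminates $\Ap_\gw, \App_\gw$ (in the role of $\Tp_\gw(t)$ and $\Tpp_\gw(t)$), $\Bp_\gw, \Bpp_\gw$ (in the role of $\Upl_\gw(t;1+z)$ and $\Uppl_\gw(t;1+z)$), and $\Cp_\gw, \Cpp_\gw$ (in the role of $\Upl_\gw(t;1)$ and $\Uppl_\gw(t;1)$). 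In Equations~\eqref{Tp}, \eqref{Tpp}, \eqref{Up} and~\eqref{Upp-light} I would replace $x$ by $1 + z$, every $x$-dependent series $\Upl_\gw(t;x), \Uppl_\gw(t;x)$ by the matching $\Bp$- or $\Bpp$-variable, every specialisation at $x = 1$ by the matching $\Cp$- or $\Cpp$-variable, every $\Tp$-series (resp.\ $\Tpp$-series) by the corresponding $\Ap$ (resp.\ $\App$), the trivial series $\Tp_\vareps, \Tpp_\vareps$ and the $\Tp, \Tpp$ on non-balanced words by their values $1, 0, 0$, and finally clear the denominator $x - 1 = z$ by multiplying the translated version of~\eqref{Upp-light} through by $z$ --- which is legitimate, since the divided difference $\frac{x}{x-1}\big(\Uppl_\gw(t;x) - x^k\Uppl_\gw(t;1)\big)$ becomes $\frac{1+z}{z}\big(\Bpp_\gw - (1+z)^k\Cpp_\gw\big)$, which is regular after multiplication by $z$. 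This produces a polynomial system over $\cs$, to which I attach the integers $d = 1$ for the variables $\Ap_\gw, \App_\gw, \Cp_\gw, \Cpp_\gw$ (which should be $z$-free) and $d = 2$ for $\Bp_\gw, \Bpp_\gw$.

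By construction the tuple $\Ap_\gw = \Tp_\gw(t)$, $\App_\gw = \Tpp_\gw(t)$, $\Bp_\gw = \Upl_\gw(t;1+z)$, $\Bpp_\gw = \Uppl_\gw(t;1+z)$, $\Cp_\gw = \Upl_\gw(t;1)$, $\Cpp_\gw = \Uppl_\gw(t;1)$ solves this system: each $\Upl_\gw$ and $\Uppl_\gw$ is a polynomial in $x$ at every order in $t$, hence lies in $\cs[[t,z]]$, and the $\Ap$-, $\App$-, $\Cp$- and $\Cpp$-series are indeed free of $z$, matching the chosen $d$-values. By Theorem~\ref{thm:popescu} it then suffices to prove that the polynomial system has a \emph{unique} solution in $\cs[[t,z]]$ obeying these dependence conditions. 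To that end I would assume $\Ap_\gw, \App_\gw, \Bp_\gw, \Bpp_\gw, \Cp_\gw, \Cpp_\gw$ is such a solution and first set $z = 0$ in the translated (and, for~\eqref{Upp-light}, $z$-cleared) versions of~\eqref{Up} and~\eqref{Upp-light}; this forces $\Cp_\gw$ (resp.\ $\Cpp_\gw$) to be the value at $z = 0$ of $\Bp_\gw$ (resp.\ $\Bpp_\gw$), so the ``$x = 1$'' series are pinned down by the ``$x$'' series. Then, exactly as in the $M(t;x)$ example and in the proof of Proposition~\ref{prop:alg-c}, the shape of the system shows that $[t^n]$ of each series is determined by coefficients of strictly smaller order, so the whole solution is computed by induction on $n$, starting from $\Bp_\vareps = \Cp_\vareps = 1 + O(t)$, all other series being $O(t)$ and $\Ap_\vareps = 1$. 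This gives uniqueness, and Theorem~\ref{thm:popescu} then yields the algebraicity of $\Upl_\vareps^{(k)}(t;x)$ --- and of every other series of the system.

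The step I expect to be the main (admittedly minor) obstacle is the very last claim: that extracting $[t^n]$ never calls on coefficients of order $\ge n$, given that the sequential equations~\eqref{Tp} and~\eqref{Up} carry no explicit factor of $t$. I would handle it with the observation already used in the proof of Proposition~\ref{def-syst-u-p}: every non-trivial term on the right-hand side of~\eqref{Tp} or~\eqref{Up} has a \emph{prime} series $\App_\gv$ or $\Bpp_\gv$ as a factor, and every prime series is $O(t)$ because the equations~\eqref{Tpp} and~\eqref{Upp-light} defining them are multiples of $t$. Chaining these two facts keeps the induction on $n$ well founded, which is exactly what the uniqueness argument --- and hence the whole proof --- needs.
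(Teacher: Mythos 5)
Your overall strategy is the right one and matches the paper's: translate the system of Proposition~\ref{def-syst-u-p} into a polynomial system in $t$ and $z=x-1$, clear the denominator $z$, exhibit the combinatorial solution with the prescribed $z$-dependences, prove uniqueness by setting $z=0$ and then inducting on the power of $t$, and invoke Theorem~\ref{thm:popescu}. The observation at the end about why the induction on $n$ is well founded (every non-trivial term of~\eqref{Tp} and~\eqref{Up} carries a prime series as a factor, and the prime equations are multiples of $t$) is also correct and needed.

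However, there is a genuine gap in the uniqueness step, and it is precisely the point where the prime-decomposition case differs from Proposition~\ref{prop:alg-c}. You assert that setting $z=0$ in the translated version of~\eqref{Up} forces $\Cp_\gw$ to equal $\Bp_\gw$ at $z=0$. It does not: equation~\eqref{Up} contains no divided difference and no occurrence of the specialized series $\Upl_\gw(t;1)$, so its translation involves only $\Bp$-, $\Bpp$- and $\App$-variables, and setting $z=0$ yields no constraint whatsoever on any $\Cp_\gw$. The only $x=1$ specialization of a non-prime series that occurs anywhere in the system is $\Upun{\vareps}$, and it appears only as the multiplicative prefactor $t\,\Upun{\vareps}(\cdots)$ in~\eqref{Tpp} and~\eqref{Upp-light}. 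Consequently the translated system leaves $\Cp_\vareps$ completely undetermined: for instance one can take $\Cp_\vareps=2$, deduce $\Cpp_\gw=\Bpp_\gw|_{z=0}$ from the $z$-cleared prime equations (which only give $t\,\Cp_\vareps\,(\Bpp_\gw|_{z=0}-\Cpp_\gw)=0$, so they even need $\Cp_\vareps\neq 0$ to be usable), and then solve for the remaining series by induction on $n$, producing a second admissible solution. Uniqueness fails, and Popescu's theorem then only tells you that \emph{some} solution is algebraic, not yours. The fix, which is what the paper does, is to enlarge the system by the $x=1$, $\gw=\vareps$ instance of~\eqref{Up}, namely $\Cp_\vareps = 1 + \Cp_\vareps\,\Cpp_\vareps$; this pins $\Cp_\vareps$ to $\Cpp_\vareps$ (hence, via the $z=0$ specialization of the prime equations, to $\Bpp_\vareps|_{z=0}$), restores uniqueness, and the rest of your argument then goes through.
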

\begin{proof}
  Again, the idea is to apply  Theorem~\ref{thm:popescu} to the system of
  Proposition~\ref{def-syst-u-p}, after writing $x=1+z$. The proof is roughly the same as that of
  Proposition~\ref{prop:alg-c}: we define a polynomial system
  involving two variables, $t$ and $z$, and {six} families of unknowns
  $\Ap_\gw$, $\App_\gw$, $\Bp_\gw$, $\Bpp_\gw$, $\Cp_\gw$, $\Cpp_\gw$. The equations they
  satisfy are those of Proposition~\ref{def-syst-u-p}, rewritten with 
$$
\Tp_\gw \rightarrow \Ap_\gw, \qquad \Tpp_\gw \rightarrow \App_\gw, \qquad \Up{\gw}
\rightarrow \Bp_\gw, \qquad \Upp{\gw} \rightarrow \Bpp_\gw, \qquad  \Upun{\gw}
\rightarrow \Cp_\gw, \qquad \Uppun{\gw} \rightarrow \Cpp_\gw.
$$
In fact, the only series $\Upun{\gw}$ occurring in our system is
$\Upun{\vareps}$, so that the polynomial system we construct 
involves $\Cp_\vareps$, but no other $\Cp$-series.
The prescribed dependences are that the $\Ap, \App, \Cp$ and $\Cpp$
series are independent of $z$.
For instance, when $k=1$ we  convert~\eqref{u-pk=1} into:
$$
\begin{cases}%
\Bp_\vareps\!\!\!\!& = \ 1+\Bp_\vareps \Bpp_\vareps,
\\
\Bpp_\vareps\!\!\!\!& = \ 2t(1+z)\Bp_\vareps + t \Cp_\vareps \left( 2(1+z)
 \Cpp_0
+ \frac{1+z}{z}(\Bpp_\vareps -(1+z) \Cpp_\vareps) \right),
\\
\Bpp_0\!\!\!\!& = \  t(1+z)\Bp_\vareps + t \Cp_\vareps \left( (1+z)
 \Cpp_0
+ \frac{1+z}{z}(\Bpp_0 -(1+z)\Cpp_0) \right).
\end{cases}
$$
However, this is not sufficient, because this system does  \emm not,
imply that $\Cp_\vareps$ is $\Bp_\vareps$ at $z=0$ (it \emm does, however
imply that $\Cpp_\vareps$ is $\Bpp_\vareps$ at $z=0$, and similarly
for $\Cpp_0$ and $\Bpp_0$). Hence, in order to apply Popescu's
theorem, we need to add to our collection of equations the case $x=1$,
$\gw=\vareps$ of~\eqref{Up}, namely
$\Cp_\vareps= 1+
\Cp_\vareps\Cpp_\vareps$. The rest of the argument mimics the proof of Proposition~\ref{prop:alg-c}.
\end{proof}

\subsection{Back to examples}
\label{sec:sol-up}
We first return to the system~\eqref{u-pk=1} obtained for $k=1$. In the second equation, replace 
$\Up{\vareps}$ by $1/(1-\Upp{\vareps})$,  $\Upun{\vareps}$ by
$1/(1-\Uppun{\vareps})$ and $\Uppun {0}$ by $\Uppun{\vareps}/2$. This gives a polynomial equation involving
only $\Upp{\vareps}$ and  $\Uppun{\vareps}$, which can be solved by
the quadratic method already used in Section~\ref{sec:quadratic}. This gives for
$\Uppun{\vareps}$ a cubic equation. Getting back to
$\Upun{\vareps}=1/(1-\Uppun{\vareps})$, we obtain for the \gf\
$\Upl_\vareps^{(1)}$ of orientations in $\mathbb U ^{(1)}$ the same cubic equation~\eqref{cubic} as
for orientations of $\mathcal U^{(1)}$. In fact, one can check that $\mathbb U ^{(1)}=\mathcal U^{(1)}$.
Of course, the upper bound on $\mu$ is  $\bar \mu_1=\mu_1= 13.0659\ldots$

\medskip
Let us now solve the system~\eqref{u-p-k=2} obtained for $k=2$. 
It can be reduced to a system of three equations defining the
series $\Uppl_\vareps, \Uppl_{10}$ and $\Uppl_{100}$: 
\beq\label{u-p-k=2-s}
\begin{cases}%
\Upp{\vareps}
\!\!\!\!& = \  2tx\Up{\vareps} +t \Upun{\vareps} \left(x \Uppun{\vareps}+
2x^2(\Uppun{100}+\Uppun{10}) 
  + \frac{x}{x-1} ( \Upp{\vareps}-x^2\Uppun{\vareps}) \right),
\\
\Upp{10} \!\!\!\!& = \  xt(1 + \Upun{\vareps} \Uppun{\vareps}/2)
+tx (\Up{\vareps}-1)/2 + t\Upun{\vareps} \left( x^2\Uppun{10} + \frac{x}{x-1}(
\Upp{10}-x^2\Uppun{10}) \right),
\\
 \Upp{100}\!\!\!\!& = \  tx\Up{10}+t \Upun{\vareps} \left(
x^2\Uppun{100} + \frac{x}{x-1}( \Upp{100}-x^2\Uppun{100})\right),
\end{cases}
\eeq
in which  we inject
\beq\label{specializations}
 \Upun{\vareps}= \frac 1 {1-\Uppun{\vareps}} , \qquad
 \Up{\vareps}= \frac 1 {1-\Upp{\vareps}} \qquad \hbox{and } \qquad 
\Up{10}= \frac{\Upp{10}}{1- \Upp{\vareps}}.
\eeq
%
 We lighten notation by  denoting $A=\Upp{\vareps}$, $A_1=\Uppun{\vareps}$,
$B=\Upp{10}$, $B_1=\Uppun{10}$, $C=\Upp{100}$ and
$C_1=\Uppun{100}$, and we follow the steps used in Section~\ref{sec:sol-u-k2-c} to solve
System~\eqref{u-c-k=2-gros}. Again, we refer to our web pages for the
corresponding Maple session. The intermediate steps are as
follows. We first apply the quadratic method to the first equation. We
then turn to the second one. The equation satisfied by $X$ is
$$
X=1+\frac{t}{1-t-A_1},
$$
(Note that it gives $X$ explicitly in terms of $A_1$, whereas we had a
quadratic equation~\eqref{algX} in the previous case.)
We then derive
$$
A_2:=\Uppl_\vareps(X)=1+ t\ \frac{1-A_1}{t-2B_1(1-A_1)}.
$$
We finally consider the third equation of~\eqref{u-p-k=2-s} (after
injecting~\eqref{specializations}), and derive:
$$
B_2:=\Uppl_{10}(X)= (1-A_2)C_1/t.
$$
Then it follows from the
second equation that:
$$
A_2':=\frac{\partial \Uppl_\vareps}{\partial x}(X)=
\frac{2(1-A_1)(1-t-A_1)^2((1-t-A_1)C_1+2(-1+A_1)B_1^2+tB_1)}{(t-2B_1(1-A_1
  ))^3}
.
$$

At the end, we obtain an equation of degree 28 for
$A_1=\Uppun{\vareps}$, and then for $\Upun{\vareps}$. Its dominant
coefficient does not vanish away from 0, and its discriminant has only one
root in $[1/10, 1/16]$ (where we know that the radius must be found),
around $0.0766$. This gives the upper bound $\bar \mu_2=13.047\ldots$ on
the growth rate $\mu$ of Eulerian orientations. From numerical
estimates of the singular exponent, we predict that the series has
again a ``planar map'' singularity in $(1-\bar \mu_2t)^{3/2}$. 

\medskip
For $k=3$, $4$ and $5$, we have generated our systems of
  equations and computed the first 100
  coefficients of $\Upl_\vareps^{(k)}(t;x)$. From this we get the estimates
  of the  growth rates $\bar \mu_k$ shown in Table~\ref{tab:values}. The
  singularity still appears to be in $(1-\bar \mu_kt)^{3/2}$.
We conjecture that this holds for any $k$.

\begin{conjecture}\label{conj:sup-p}
  For every $k$, the  algebraic series $\Upl_\vareps^{(k)}(t;1)$ that counts orientations of
  $\psup$ has a unique dominant singularity $\bar \tau_k=1/\bar \mu_k$ which is of the
  planar map type: as $t$ approaches $\bar \tau_k$ from below,
$$
\Upl_\vareps^{(k)}(t;1)=c_0+c_1 (1- \mu_k t) + c_2 (1-\bar \mu_kt)^{3/2}
\big(1+o(1)\big)
$$
for $c_2\not =0$.
\end{conjecture}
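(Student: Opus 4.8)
The plan is to prove Conjecture~\ref{conj:sup-p} (and, by the same argument, Conjecture~\ref{conj:sup-c}) by an analogue of the reasoning used for the subsets $\peom$ and $\pinf$ in Sections~\ref{sec:asympt-c} and~\ref{sec:asympt-p}, with the Drmota--Lalley--Woods theorem for positive irreducible \emph{polynomial} systems replaced by a \emph{universality} statement for positive irreducible systems \emph{with one catalytic variable and divided differences at $x=1$}, of the kind established by Drmota and Noy~\cite{drmota-noy-universal} (and, for a single equation, implicit in the quadratic-method analyses of~\cite{mbm-jehanne,goulden-jackson}). Such a statement asserts that, for a suitably well-behaved system, the specialisation at $x=1$ of the catalytic series has a unique dominant singularity at which it admits an expansion in $(1-\bar\mu_k t)^{3/2}$ with non-zero leading coefficient; the work is then to verify the relevant hypotheses for our systems.

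As a first step I would put the system $\sf R_0$ of Proposition~\ref{def-syst-u-p} into a positive, proper form, exactly as $S_0$ and $\sf S_0$ were treated in the proofs of Proposition~\ref{lem:prop_systm_prev} and its analogue in Section~\ref{sec:asympt-p}. After clearing the denominators $x-1$, the system is polynomial in $t$, $x$, the series $\Up{\gw},\Upp{\gw}$ and their specialisations $\Upun{\gw},\Uppun{\gw}$ at $x=1$, the only division being through the divided differences as in~\eqref{eq-func-M}. The obstructions to positivity — the terms $\tfrac{x}{x-1}(\Upp{\gw}-x^k\Uppun{\gw})$ and the terms $x^{|\gw|/2}\Tpp_\gw$ carrying a hidden minus sign — are removed by introducing shifted series counting orientations whose root word ends \emph{strictly} with the relevant factor (the analogues of $\Lc^+_\gw$ and $\Lppp{\gw}$). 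Irreducibility and aperiodicity are then established as in Section~\ref{sec:asympt-p}: an induction on the balance $b(\gw)$ shows that every series of the system depends on $\Upl_\vareps^{(k)}$ and, conversely, that $\Upl_\vareps^{(k)}$ depends on every other series; properness holds because of the factor $t$ present in every non-initial term (after one iteration of the system, if needed).

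The heart of the matter is the singular analysis, and here one cannot simply set $x=1$: the divided differences become $0/0$ and the resulting relations are vacuous, exactly as for planar maps in~\eqref{eq-func-M}. One must instead eliminate the catalytic variable. For $k=1,2$ this was carried out explicitly in Sections~\ref{sec:quadratic} and~\ref{sec:sol-up}, producing a polynomial equation $\Pol(\Upl_\vareps^{(k)}(t;x),\Upl_\vareps^{(k)}(t;1),t,x)=0$ (of degree $3$, resp.~$28$, in $\Upl_\vareps^{(k)}(t;1)$) together with a branch point $X(t)$ defined by $\Pol_{x_0}(\Upl_\vareps^{(k)}(t;X),\Upl_\vareps^{(k)}(t;1),t,X)=0$, this $X(t)$ being a double root of the $x_0$-discriminant $\Delta$ of $\Pol$; for general $k$ one would either perform the same elimination of the subordinate series $\Up{\gw},\Upp{\gw}$ with $\gw\neq\vareps$, or work with the full vector through a matrix version of the quadratic method. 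The dominant singularity $\bar\tau_k=1/\bar\mu_k$ of $\Upl_\vareps^{(k)}(t;1)$ is then the smallest positive $t$ at which the double root $X(t)$ of $\Delta$ coalesces with a further root of $\Delta$; by the standard local analysis of such a confluence (the Airy-type mechanism underlying the universality results of~\cite{drmota-noy-universal}), this produces the $3/2$ expansion
\[
\Upl_\vareps^{(k)}(t;1)=c_0+c_1(1-\bar\mu_k t)+c_2(1-\bar\mu_k t)^{3/2}\big(1+o(1)\big)
\]
with $c_2\neq 0$, rather than a square root, while positivity and aperiodicity force $\bar\tau_k$ to be the \emph{unique} singularity of minimal modulus, via a Pringsheim-type argument. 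It is here that the fact $\Upl_\vareps^{(k)}(t;x)\in\cs[[t,x-1]]$, already used in Proposition~\ref{prop:alg-p}, makes the specialisation at $x=1$ legitimate.

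The main obstacle is that no off-the-shelf theorem currently covers \emph{systems} of equations with one catalytic variable and divided differences at the generality required here; this is precisely why the statement remains a conjecture. A complete proof would need either (a) a proof that the subordinate series can always be eliminated, reducing the problem to a single catalytic equation to which the Drmota--Noy framework applies directly, or (b) a systems version of that framework, in which the structural conditions verified above (positivity, properness, irreducibility, aperiodicity) together with a non-degeneracy hypothesis on the kernel imply the $3/2$ law. The genuinely delicate point in either route is uniformity in $k$: one must exhibit a structural reason for $c_2\neq 0$ — for instance the persistence of a sign in the resultant governing the confluence at $\bar\tau_k$ — rather than checking it case by case. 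For any fixed small $k$, by contrast, the conjecture can be verified directly: from the explicit minimal polynomial one carries out a Newton-polygon analysis at the candidate singularity (e.g.\ via {\sc Maple}'s {\tt algeqtoseries}), exactly as the text does for $k=1$ and $k=2$, which is also how the numerical estimates of $\bar\mu_k$ for $k\le 5$ in Table~\ref{tab:general} were obtained.
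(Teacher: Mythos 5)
You have not proved the statement, and you say so yourself; but it is worth being explicit that the paper does not prove it either. Conjecture~\ref{conj:sup-p} is genuinely open: the authors' only support for it is the explicit computation for $k=1,2$ (degree~3 and degree~28 minimal polynomials, followed by a local expansion at the candidate singularity) and numerical estimates of the singular exponent for $k=3,4,5$. In Section~\ref{sec:final} they themselves pose, as an open problem, exactly the missing ingredient you identify: a notion of positive irreducible system \emph{with divided differences} whose solutions would systematically exhibit a $(1-\mu t)^{3/2}$ singularity, with~\cite{drmota-noy-universal} covering only the single-equation case. So your proposal is an honest and well-aligned research plan rather than a proof, and its route --- positivisation and irreducibility as in Sections~\ref{sec:asympt-c} and~\ref{sec:asympt-p}, elimination of the catalytic variable via a (matrix) quadratic method, and a universality statement for the resulting confluence of branch points --- coincides with the one the authors sketch.

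The genuine gaps, which you correctly flag but which bear emphasis, are twofold. First, there is no systems version of the Drmota--Noy framework for divided-difference equations, and it is not known that the subordinate series $\Upp{\gw}$, $\gw\neq\vareps$, can always be eliminated down to a single catalytic equation of the form treated in~\cite{drmota-noy-universal}; the paper's own $k=2$ computation already required an ad hoc, non-generic step-by-step elimination (the Jacobian has a multiple root), so genericity hypotheses of any off-the-shelf theorem may well fail. Second, even granting the elimination, the non-vanishing of $c_2$ and the uniqueness of the dominant singularity do not follow from positivity and aperiodicity alone in the catalytic setting --- the ``Pringsheim-type argument'' you invoke gives that $\bar\tau_k$ is a singularity, not that it is the only one of minimal modulus, and the $3/2$ exponent could in principle degenerate for some $k$. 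For fixed small $k$ your Newton-polygon verification is exactly what the paper does; for general $k$ the statement remains a conjecture, and your text should be read as a plan of attack, not a proof.
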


\section{Final comments}
\label{sec:final}
 As  mentioned at the end of the introduction, it might be
easier to count Eulerian orientation of 4-valent maps. In such
orientations, each vertex has two in-going and two outgoing edges, so
that counting Eulerian orientations means solving the so-called \emm
ice-model,  on (random)
4-valent maps~\cite[Chap.~8]{baxter}. The number of Eulerian orientations of
a 4-valent planar map is known to be a third of the number of proper 3-colourings
of its dual~\cite{welsh-tutte}. Thus counting these orientations is equivalent to
counting 3-coloured planar quadrangulations. A number of enumerative results are
already known about coloured maps. In particular, 3-coloured planar
maps, and 3-coloured planar triangulations, have algebraic
\gfs~\cite{bernardi-mbm-alg,BDFG02b,tutte-census-maps,tutte-chromatic-revisited}. More
generally, $q$-coloured planar maps and triangulations
have  differentially algebraic \gfs. So
this could be true for 3-coloured quadrangulations as well, and hence
for Eulerian orientations of 4-valent maps. Several results on this problem  appear in the physics
literature, but there does not seem to be an explicit exact solution
at the moment~\cite{kostov,zinn-justin-6V-random}.

\medskip
To finish, let us recall two questions left open by this paper (beyond
the enumeration of Eulerian orientations!).
\begin{itemize}
\item Do the growth rates $\bar \mu_k$ of orientations of $\psup$
  decrease to the growth rate $\mu$ of Eulerian orientations, or
  to a larger value?
\item In Sections~\ref{sec:asympt-c} and~\ref{sec:asympt-p}, we have used a general result about positive
  irreducible systems of polynomial equations to prove that the
  \gfs\ of our subsets of Eulerian orientations have a square root
  singularity. Could one define a notion of positive irreducible system of
  polynomial equations \emm with divided differences, whose solutions
  would systematically exhibit a singularity in $(1-\mu t)^{3/2}$?
  Hopefully this would apply to our supersets of orientations and
  prove Conjectures~\ref{conj:sup-c} and~\ref{conj:sup-p}. A first
  step in this direction, applicable to a single equation, is achieved in~\cite{drmota-noy-universal}.
\end{itemize}

\bigskip
\noindent {\bf Acknowledgements.} We are grateful to Alin Bostan,
Jean-Charles Faugère, Tony Guttmann and Bruno Salvy for their help with
the algebraic systems, various calculations and suggestions. MBM also
thanks the organizers of the workshop ``Approximation and
Combinatorics'' in 2015 in CIRM (Luminy, F) Herwig Hauser and Guillaume Rond,  for very
interesting discussions on the algebraic aspects of functional
equations arising in map enumeration.


\bibliographystyle{plain}
\bibliography{oe.bib}

\begin{thebibliography}{10}

\bibitem{alm-janson}
S.~E. Alm and S.~Janson.
\newblock Random self-avoiding walks on one-dimensional lattices.
\newblock {\em Comm. Statist. Stochastic Models}, 6(2):169--212, 1990.

\bibitem{barequet-moffie-ribo-rote}
G.~Barequet, M.~Moffie, A.~Rib{\'o}, and G.~Rote.
\newblock Counting polyominoes on twisted cylinders.
\newblock {\em Integers}, 6:A22, 37, 2006.

\bibitem{baxter}
R.~J. Baxter.
\newblock {\em Exactly solved models in statistical mechanics}.
\newblock Academic Press, Inc. [Harcourt Brace Jovanovich, Publishers], London,
  1982.

\bibitem{baxter-dichromatic}
R.~J. Baxter.
\newblock Dichromatic polynomials and {P}otts models summed over rooted maps.
\newblock {\em Ann. Comb.}, 5(1):17--36, 2001.

\bibitem{bernardi-bonichon}
O.~Bernardi and N.~Bonichon.
\newblock Intervals in {C}atalan lattices and realizers of triangulations.
\newblock {\em J. Combin. Theory Ser. A}, 116(1):55--75, 2009.

\bibitem{bernardi-mbm-alg}
O.~Bernardi and M.~Bousquet-M{\'e}lou.
\newblock Counting colored planar maps: algebraicity results.
\newblock {\em J. Combin. Theory Ser. B}, 101(5):315--377, 2011.
\newblock \href{https://arxiv.org/abs/0909.1695}{ArXiv:0909:1695}.

\bibitem{BeBM-15}
O.~Bernardi and M.~Bousquet-M{\'e}lou.
\newblock Counting coloured planar maps: differential equations.
\newblock \href{http://arxiv.org/abs/1507.02391}{ArXiv:1507.02391}, 2015.

\bibitem{bona}
M.~B{\'o}na.
\newblock Exact enumeration of {$1342$}-avoiding permutations: a close link
  with labeled trees and planar maps.
\newblock {\em J. Combin. Theory Ser. A}, 80(2):257--272, 1997.

\bibitem{Boni05}
N.~Bonichon.
\newblock A bijection between realizers of maximal plane graphs and pairs of
  non-crossing {D}yck paths.
\newblock {\em Discrete Math.}, 298(1-3):104--114, 2005.

\bibitem{bonichon-mbm-fusy}
N.~Bonichon, M.~Bousquet-M\'elou, and \'E. Fusy.
\newblock Baxter permutations and plane bipolar orientations.
\newblock {\em S\'em. Lothar. Combin.}, 61A:Art. B61Ah, 29, 2009/11.

\bibitem{BGH05}
N.~Bonichon, C.~Gavoille, and N.~Hanusse.
\newblock Canonical decomposition of outerplanar maps and application to
  enumeration, coding and generation.
\newblock {\em J. Graph Algorithms Appl.}, 9(2):185--204 (electronic), 2005.

\bibitem{borot3}
G.~Borot, J.~Bouttier, and E.~Guitter.
\newblock Loop models on random maps via nested loops: case of domain symmetry
  breaking and application to the {P}otts model.
\newblock {\em J. Phys. A}, 45:494017, 2012.

\bibitem{BK87}
D.~V. Boulatov and V.~A. Kazakov.
\newblock The {I}sing model on a random planar lattice: the structure of the
  phase transition and the exact critical exponents.
\newblock {\em Phys. Lett. B}, 186(3-4):379--384, 1987.

\bibitem{mbm-jehanne}
M.~Bousquet-M\'elou and A.~Jehanne.
\newblock Polynomial equations with one catalytic variable, algebraic series
  and map enumeration.
\newblock {\em J. Combin. Theory Ser. B}, 96:623--672, 2006.

\bibitem{mbm-schaeffer-constellations}
M.~Bousquet-M{\'e}lou and G.~Schaeffer.
\newblock Enumeration of planar constellations.
\newblock {\em Adv. in Appl. Math.}, 24(4):337--368, 2000.

\bibitem{mbm-schaeffer-ising}
M.~Bousquet-M{\'e}lou and G.~Schaeffer.
\newblock The degree distribution of bipartite planar maps: applications to the
  {I}sing model.
\newblock In K.~Eriksson and S.~Linusson, editors, {\em Formal Power Series and
  Algebraic Combinatorics}, pages 312--323, Vadstena, Sweden, 2003.
\newblock Long version on
  \href{http://arxiv.org/abs/math/0211070}{arXiv:math/0211070}.

\bibitem{BDFG02b}
J.~Bouttier, P.~Di~Francesco, and E.~Guitter.
\newblock Counting colored random triangulations.
\newblock {\em Nucl. Phys. B}, 641:519--532, 2002.

\bibitem{BDG-blocked}
J.~Bouttier, P.~Di~Francesco, and E.~Guitter.
\newblock Blocked edges on {E}ulerian maps and mobiles: application to spanning
  trees, hard particles and the {I}sing model.
\newblock {\em J. Phys. A}, 40(27):7411--7440, 2007.

\bibitem{brak-guttmann}
R.~Brak and A.~J. Guttmann.
\newblock Algebraic approximants: a new method of series analysis.
\newblock {\em J. Phys. A}, 23(24):L1331--L1337, 1990.

\bibitem{brown-square}
W.~G. Brown.
\newblock On the existence of square roots in certain rings of power series.
\newblock {\em Math. Ann.}, 158:82--89, 1965.

\bibitem{castelli-devillers-schaeffer}
L.~Castelli~Aleardi, O.~Devillers, and G.~Schaeffer.
\newblock Succinct representations of planar maps.
\newblock {\em Theoret. Comput. Sci.}, 408(2-3):174--187, 2008.

\bibitem{daul}
J.-M. Daul.
\newblock $q$-{S}tates {Potts} model on a random planar lattice.
\newblock \href{https://arxiv.org/abs/hep-th/9502014}{ArXiv:hep-th/9502014}.

\bibitem{DFGZJ}
P.~Di~Francesco, P.~Ginsparg, and J.~Zinn-Justin.
\newblock {$2$}{D} gravity and random matrices.
\newblock {\em Phys. Rep.}, 254(1-2), 1995.
\newblock 133 pp.

\bibitem{drmota-noy-universal}
M.~Drmota and M.~Noy.
\newblock Universal exponents and tail estimates in the enumeration of planar
  maps.
\newblock {\em Elec. Notes Discrete Math.}, 38:309--317, 2011.

\bibitem{eynard-bonnet-potts}
B.~Eynard and G.~Bonnet.
\newblock The {P}otts-{$q$} random matrix model: loop equations, critical
  exponents, and rational case.
\newblock {\em Phys. Lett. B}, 463(2-4):273--279, 1999.

\bibitem{faugere}
J.-C. Faug\`ere.
\newblock Personal communication, 2016.

\bibitem{felsner2004lattice}
S.~Felsner.
\newblock Lattice structures from planar graphs.
\newblock {\em Electron. J. Combin.}, 11(1):Research Paper 15, 24 pp.
  (electronic), 2004.

\bibitem{felsner-baxter}
S.~Felsner, {\'E}.~Fusy, M.~Noy, and D.~Orden.
\newblock Bijections for {B}axter families and related objects.
\newblock {\em J. Combin. Theory Ser. A}, 118(3):993--1020, 2011.

\bibitem{fisher-sykes}
M.~E. Fisher and M.~F. Sykes.
\newblock Excluded-volume problem and the {I}sing model of ferromagnetism.
\newblock {\em Phys. Rev. (2)}, 114:45--58, 1959.

\bibitem{flajolet-sedgewick}
P.~Flajolet and R.~Sedgewick.
\newblock {\em Analytic combinatorics}.
\newblock Cambridge University Press, Cambridge, 2009.

\bibitem{fusy2009transversal}
{\'E}.~Fusy.
\newblock Transversal structures on triangulations: a combinatorial study and
  straight-line drawings.
\newblock {\em Discrete Math.}, 309(7):1870--1894, 2009.

\bibitem{fusy-12}
{\'E}.~Fusy.
\newblock Bijective counting of involutive {B}axter permutations.
\newblock {\em Fund. Inform.}, 117(1-4):179--188, 2012.

\bibitem{fusy-poulalhon-schaeffer-bip}
{\'E}.~Fusy, D.~Poulalhon, and G.~Schaeffer.
\newblock Bijective counting of plane bipolar orientations and {S}chnyder
  woods.
\newblock {\em European J. Combin.}, 30(7):1646--1658, 2009.

\bibitem{goulden-jackson}
I.~P. Goulden and D.~M. Jackson.
\newblock {\em Combinatorial enumeration}.
\newblock John Wiley \& Sons Inc., New York, 1983.
\newblock Wiley-Interscience Series in Discrete Mathematics.

\bibitem{tony-perso}
A.~J. Guttmann.
\newblock Personal communication.
\newblock March 2016.

\bibitem{guttmann-confinement}
A.~J. Guttmann and I.~Jensen.
\newblock Effect of confinement: polygons in strips, slabs and rectangles.
\newblock In {\em Polygons, polyominoes and polycubes}, volume 775 of {\em
  Lecture Notes in Phys.}, pages 235--246. Springer, Dordrecht, 2009.

\bibitem{hierholzer}
C.~Hierholzer and C.~Wiener.
\newblock Ueber die {M}\"oglichkeit, einen {L}inienzug ohne {W}iederholung und
  ohne {U}nterbrechung zu umfahren.
\newblock {\em Math. Ann.}, 6(1):30--32, 1873.

\bibitem{oeis}
OEIS~Foundation Inc.
\newblock The on-line encyclopedia of integer sequences.
\newblock \href{http://oeis.org}{http://oeis.org}.

\bibitem{jensen}
I.~Jensen.
\newblock Counting polyominoes: A parallel implementation for cluster
  computing.
\newblock In P.~M. A.~Sloot et~al., editor, {\em Computational Science—Proc.
  ICCS 2003, Part III}, volume 2659 of {\em Lecture Notes in Computer Science},
  2003.

\bibitem{Ka86}
V.~A. Kazakov.
\newblock Ising model on a dynamical planar random lattice: exact solution.
\newblock {\em Phys. Lett. A}, 119(3):140--144, 1986.

\bibitem{klarner-rivest}
D.~A. Klarner and R.~L. Rivest.
\newblock A procedure for improving the upper bound for the number of
  {$n$}-ominoes.
\newblock {\em Canad. J. Math.}, 25:585--602, 1973.

\bibitem{kostov}
I.~K. Kostov.
\newblock Exact solution of the six-vertex model on a random lattice.
\newblock {\em Nuclear Phys. B}, 575(3):513--534, 2000.

\bibitem{kurke}
H.~Kurke, G.~Pfister, D.~Popescu, M.~Roczen, and T.~Mostowski.
\newblock {\em Die {A}pproximationseigenschaft lokaler {R}inge}.
\newblock Lecture Notes in Mathematics, Vol. 634. Springer-Verlag, Berlin-New
  York, 1978.

\bibitem{legall}
J.-F. Le~Gall.
\newblock Random geometry on the sphere.
\newblock In S.~Y. Jang, Y.~R. Kim, D.-W. Lee, and I.~Yie, editors, {\em
  \href{http://www.icm2014.org/download/Proceedings_Volume_I.pdf}{{I}nternational
  {C}ongress on {M}athematicians}, Plenary lectures and ceremonies}, volume~1,
  pages 421--442, Seoul, Korea, 2014.

\bibitem{mullin-boisees}
R.~C. Mullin.
\newblock On the enumeration of tree-rooted maps.
\newblock {\em Canad. J. Math.}, 19:174--183, 1967.

\bibitem{OdM:1994}
P.~Ossona~de Mendez.
\newblock {\em Orientations bipolaires}.
\newblock PhD thesis, \'Ecole des Hautes \'Etudes en Sciences Sociales, Paris,
  1994.

\bibitem{AB}
M.~Petkov{\v{s}}ek, H.~S. Wilf, and D.~Zeilberger.
\newblock {\em {$A=B$}}.
\newblock A K Peters Ltd., Wellesley, MA, 1996.

\bibitem{pivoteau}
C.~Pivoteau, B.~Salvy, and M.~Soria.
\newblock Algorithms for combinatorial structures: well-founded systems and
  {N}ewton iterations.
\newblock {\em J. Combin. Theory Ser. A}, 119(8):1711--1773, 2012.

\bibitem{poenits-tittmann}
A.~P{\"o}nitz and P.~Tittmann.
\newblock Improved upper bounds for self-avoiding walks in {${\bf Z}^d$}.
\newblock {\em Electron. J. Combin.}, 7:Research Paper 21, 10 pp. (electronic),
  2000.

\bibitem{popescu}
D.~Popescu.
\newblock General {N}\'eron desingularization and approximation.
\newblock {\em Nagoya Math. J.}, 104:85--115, 1986.

\bibitem{poulalhon-schaeffer-coding}
D.~Poulalhon and G.~Schaeffer.
\newblock Optimal coding and sampling of triangulations.
\newblock {\em Algorithmica}, 46(3-4):505--527, 2006.

\bibitem{Pro:1993}
J.~Propp.
\newblock Lattice structure for orientations of graphs.
\newblock \href{https://arxiv.org/abs/math/0209005}{arXiv:math/0209005}, 1993.

\bibitem{gfun}
B.~Salvy and P.~Zimmermann.
\newblock Gfun: a {M}aple package for the manipulation of generating and
  holonomic functions in one variable.
\newblock {\em ACM Transactions on Mathematical Software}, 20(2):163--177,
  1994.
\newblock Reprint doi:10.1145/178365.178368.

\bibitem{tutte-census-maps}
W.~T. Tutte.
\newblock A census of planar maps.
\newblock {\em Canad. J. Math.}, 15:249--271, 1963.

\bibitem{tutte-general}
W.~T. Tutte.
\newblock On the enumeration of planar maps.
\newblock {\em Bull. Amer. Math. Soc.}, 74:64--74, 1968.

\bibitem{lambda12}
W.~T. Tutte.
\newblock Chromatic sums for rooted planar triangulations: the cases {$\lambda
  =1$} and {$\lambda =2$}.
\newblock {\em Canad. J. Math.}, 25:426--447, 1973.

\bibitem{tutte-differential}
W.~T. Tutte.
\newblock Map-colourings and differential equations.
\newblock In {\em Progress in graph theory ({W}aterloo, {O}nt., 1982)}, pages
  477--485. Academic Press, Toronto, ON, 1984.

\bibitem{tutte-chromatic-revisited}
W.~T. Tutte.
\newblock Chromatic sums revisited.
\newblock {\em Aequationes Math.}, 50(1-2):95--134, 1995.

\bibitem{vanlint01}
J.~H. van Lint and R.~M. Wilson.
\newblock {\em A Course in Combinatorics}.
\newblock Cambridge University Press, 2001.

\bibitem{welsh-tutte}
D.~Welsh.
\newblock The {T}utte polynomial.
\newblock {\em Random Structures Algorithms}, 15(3-4):210--228, 1999.
\newblock Statistical physics methods in discrete probability, combinatorics,
  and theoretical computer science (Princeton, NJ, 1997).

\bibitem{zinn-justin-6V-random}
P.~Zinn-Justin.
\newblock The six-vertex model on random lattices.
\newblock {\em Europhys. Lett.}, 50(1):15--21, 2000.

\end{thebibliography}

 \end{document}